\newcommand{\Red}{\mathsf{Red}}
\newcommand{\Risk}{\mathsf{Risk}}
\newcommand{\mem}{\mathsf{mem}}
\newcommand{\risk}{\Risk}
\newcommand{\redd}{\Red}
\newcommand{\red}{\redd}
\renewcommand{\l}{\ell}
\newcommand{\kl}{\operatorname{KL}}
\newcommand{\tv}{\operatorname{TV}}
\newcommand{\E}{\mathbb{E}}
\renewcommand{\P}{\mathbb{P}}
\newcommand{\N}{\mathcal{N}}
\newcommand{\eps}{\varepsilon}
\newcommand{\one}{\mathds{1}}
\newcommand{\Bern}{\textup{Bern}}
\newcommand{\supp}{\operatorname{supp}}
\newcommand{\T}{\operatorname{C}}
\newcommand{\qth}[1]{\left[ #1 \right]}
\newcommand{\calA}{{\mathcal{A}}}
\newcommand{\calP}{{\mathcal{P}}}
\newcommand{\calQ}{{\mathcal{Q}}}
\newcommand{\calX}{{\mathcal{X}}}
\newcommand{\naturals}{\mathbb{N}}
\newcommand{\integers}{\mathbb{Z}}
\newcommand{\Expect}{\mathbb{E}}
\newcommand{\PHMM}{\calP^{\sf HMM}}
\newcommand{\RiskHMM}{\Risk_{\sf HMM}}
\newcommand{\Prenew}{\calP^{\sf rnwl}}
\newcommand{\Riskrenew}{\Risk_{\sf rnwl}}
\newcommand{\stepa}[1]{\overset{\rm (a)}{#1}}
\newcommand{\stepb}[1]{\overset{\rm (b)}{#1}}
\newcommand{\stepc}[1]{\overset{\rm (c)}{#1}}
\newcommand{\stepd}[1]{\overset{\rm (d)}{#1}}
\newcommand{\stepe}[1]{\overset{\rm (e)}{#1}}
\newtheorem{lemma}{Lemma}
\newtheorem{theorem}{Theorem}
\newtheorem{corollary}{Corollary}
\newtheorem{proposition}{Proposition}
\newtheorem{remark}{Remark}
\newtheorem{conjecture}{Conjecture}
\title{Prediction from compression for models with infinite memory,\\ with applications to hidden Markov and renewal processes}
\author{Yanjun Han, Tianze Jiang, Yihong Wu\footnote{ Yanjun Han is with the Courant Institute of Mathematical Sciences and the Center for Data Science, New York University. Email: \url{yanjunhan@nyu.edu}. 
Yanjun Han was generously supported by the Norbert Wiener postdoctoral fellowship in statistics at MIT IDSS and a startup grant at New York University. Tianze Jiang is with MIT EECS. Email: \url{tjiang@mit.edu}. Yihong Wu is with the Department of Statistics and Data Science, Yale University. Email: \url{yihong.wu@yale.edu}.} }
\begin{document}
\maketitle
\begin{abstract}
    Consider the problem of predicting the next symbol given a sample path of length $n$, whose joint distribution belongs to a distribution class that may have long-term memory. The goal is to compete with the conditional predictor that knows the true model. For both hidden Markov models (HMMs) and renewal processes, we determine the optimal prediction risk in Kullback-Leibler divergence up to universal constant factors. 
Extending existing results in finite-order Markov models \cite{HJW23} and drawing ideas from universal compression, 
the proposed estimator has a prediction risk bounded by redundancy of the distribution class and a memory term that accounts for the long-range dependency of the model. Notably, for HMMs with bounded state and observation spaces, a polynomial-time estimator based on dynamic programming is shown to achieve the optimal prediction risk $\Theta(\frac{\log n}{n})$; prior to this work, the only known result of this type is $O(\frac{1}{\log n})$ obtained using Markov approximation \cite{sharan2018prediction}. Matching minimax lower bounds are obtained by making connections to redundancy and mutual information via a reduction argument.

\end{abstract}

\newpage
\tableofcontents
\newpage

\section{Introduction}
Consider the following ``ChatGPT'' style of problem: Observing a sample path $X^n\triangleq(X_1,\ldots,X_n)$ of a random process, one is tasked to predict the next (unseen) symbol $X_{n+1}$. Mathematically, this boils down to estimating the \emph{conditional} distribution $P_{X_{n+1}|X_1^n}$, which informs downstream tasks such as finding the top few most likely realizations in autocomplete or text generation in language models. This is a well-defined but non-standard statistical problem, in that the quantity to be estimated is random and data-dependent, unless the data are i.i.d., in which case the problem is nothing but density estimation and the optimal rate under, say, Kullback-Leibler (KL) divergence loss, is the classical ``parametric rate'' $\frac{k}{n}$ achieved by smoothed empirical distribution, where $k$ and $n$ refers to the alphabet and sample size respectively. As such, the first non-trivial instance is Markov model and of interest to applications such as natural language processing are large state spaces.

The study of this problem was initiated by \cite{Falahatgar16learning} focusing on two-state Markov chains, who showed, via a tour-de-force argument, the surprising result that the optimal KL prediction risk is $\Theta(\frac{\log\log n}{n})$, strictly slower than the parametric rate. Their ad hoc techniques are difficult to extend to larger state space, unless extra conditions are assumed such as a large spectral gap \cite{HOP18}.
Although such mixing conditions are necessary for parameter estimation, they are not for prediction. Indeed, a chain that moves at a glacial speed is in fact easy to predict but estimating the transition probabilities is impossible.
This is a significant conceptual distinction between estimation and prediction, the latter of which can be studied meaningfully assumption-free without even identifiability conditions.

Departing from conventional approaches based on concentration inequalities of Markov chains which inevitably involves mixing conditions, a strategy based on \textit{universal compression} is proposed in \cite{han2021optimal,HJW23} for prediction of Markov chains. They showed, by means of information-theoretic arguments, that the optimal prediction risk is within universal constant factors of the so-called \textit{redundancy}, a central quantity in universal compression that measures the KL radius of the model class. Furthermore, this reduction is also \textit{algorithmic}: if there is a computationally efficient probability assignment that achieves the redundancy, one can construct an efficient predictor with guaranteed optimality. However, their method is limited to Markov models with a finite order.


The main goal of this work is to extend these techniques based on universal compression beyond models with finite memory to those with infinite memory, in particular, \textit{hidden Markov models} (HMMs) and \textit{renewal processes}.
Along the way, we obtain new theoretical and computational results for prediction HMM that improve the state of the art.

\subsection{Main results}
\label{sec:main}
Let us begin with the formulation of the \textit{prediction risk} for a general model class.
For $n\in\naturals\triangleq \{1,2,\ldots\}$, let $\calP_{n+1}$ be a collection of joint distributions $P_{X^{n+1}}$ for $X^{n+1}\triangleq(X_1,\ldots,X_{n+1})$, where each observation $X_t$ takes values in some space $\calX$.
The prediction risk of the next unseen symbol $X_{n+1}$ based on the trajectory $X_1,\ldots,X_n$ is the average $\kl$ risk of estimating the (random, data-dependent) distribution $P_{X_{n+1}|X^n}$. Any such estimator can be written as a conditional distribution $Q_{X_{n+1}|X^{n}}$, whose worst-case prediction risk over the model class is 
\begin{equation}
    \Risk(Q_{X_{n+1}|X^{n}}; \calP_{n+1}) \triangleq \sup_{P_{X^{n+1}}\in\calP_{n+1}} \Expect_{X^{n+1}\sim P_{X^{n+1}}} \qth{\kl(P_{X_{n+1}|X^{n}} \| Q_{X_{n+1}|X^{n}})}
    \label{eq:risk-Q}
    \end{equation}
 where the KL divergence is $\kl(P\|Q) = \Expect_P[\log \frac{dP}{dQ}]$ if $P\ll Q$ and $\infty$ otherwise. 
 The minimax prediction risk  is then defined as
    \begin{equation}
    \Risk(\calP_{n+1}) \triangleq \inf_{Q_{X_{n+1}|X^{n}}}  \Risk(Q_{X_{n+1}|X^{n}}; \calP_{n+1}),
    \label{eq:risk}
    \end{equation}  
As exemplary applications, we consider two model classes with infinite memory: HMMs and renewal processes.
Relevant notations are deferred to \Cref{appdx: lemmas}.

\paragraph{Hidden Markov Models}

A hidden Markov process is obtained by passing a Markov process through a memoryless noisy channel. It provides a useful tool for modeling practical data such as natural language and speech signals. Specifically, fix $k,\ell \in \naturals$. Let $\{Z_t: t \geq 1\}$ be a stationary Markov chain on the state space $[k]\triangleq\{1,\dots,k\}$ with transition matrix $M$, which is a $k\times k$ row-stochastic matrix. 
Let $T$ denote a probability transition kernel from $[k]$ to $[\ell]$, that is, a $k\times \ell$ row-stochastic matrix.
Let $\{X_t: t \geq 1\}$ be an $[\ell]$-valued process such that for any $n$, $P_{X^n|Z^n} = \prod_{t=1}^n T(x_t|z_t)$. 
We refer to 
$\{X_t\}$  as a hidden Markov process  with \textit{transition probabilities} $M$ and \textit{emission probabilities} $T$, while $\{Z_t\}$ are called the \textit{hidden (or latent) states}.

Let $\PHMM_{n}(k,\ell)$ denote the collection of joint distributions of hidden Markov processes of length $n+1$ with state space $[k]$ and observation space $[\ell]$. This is a finite-dimensional parametric model (by $M$ and $T$) with a total of $k(k-1)+k(\ell-1)$ parameters.
We note that over this class the parameters $M$ and $T$ are \textit{not} identifiable since no further conditions such as full rank of $M$ are assumed cf.~\cite[Example 1]{alexandrovich2016nonparametric}. Yet, the prediction problem is both well-defined and non-trivial. We define the optimal prediction risk of HMM as follows:
\[
\RiskHMM(n,k,\ell) \triangleq
\Risk(\PHMM_{n}(k,\ell))
\]

Our main results on predicting hidden Markov processes are as follows.
\begin{theorem}[Optimal prediction risk for HMM]\label{thm: stats bounds}
The following holds:
\begin{itemize}
    \item There exists a universal constant $C$ such that for all
    $n\geq C k(k+\ell)$,
        \begin{equation}\label{eq: hmm opt risk}
  \risk_{\operatorname{HMM}}(n, k, \l)\leq C\left(\frac{k\l}{n}\log\frac{n}{k\l}+\frac{k^2}{n}\log\frac{n}{k^2}\right),
    \end{equation}
    achieved by an $n^{O(k^2+k\l)}$-time algorithm.

    \item Conversely, if either $\l\geq k$ and $n\geq k\l$, or $n \ge k^C$ and $k, \l\geq 2$, then
        \begin{equation}\label{eq: hmm opt risk2}
         \risk_{\operatorname{HMM}}(n, k, \l)\geq 
         C^{-1}\left(\frac{k\l}{n}\log\frac{n}{k\l}+\frac{k^2}{n}\log\frac{n}{k^2}\right).
    \end{equation} 
\end{itemize}
\end{theorem}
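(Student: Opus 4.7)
The plan is to reduce the prediction problem to bounding the cumulative redundancy of the HMM class, following the paradigm established in \cite{HJW23} for finite Markov chains. Define the cumulative redundancy as the infimum over probability assignments $Q_{X^{n+1}}$ of $\sup_{P\in\PHMM_n(k,\l)} \kl(P\|Q)$. By the chain rule for KL and a standard online-to-batch conversion (randomizing the prediction index in a window of length $\Theta(n)$), a redundancy bound of $O((k^2+k\l)\log n)$ at sample size $\Theta(n)$ yields a prediction risk bound of the desired order $\frac{k\l}{n}\log(n/(k\l))+\frac{k^2}{n}\log(n/k^2)$ once the leading constants are tracked via a Krichevsky--Trofimov-type accounting.

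For the probability assignment $Q$, I would use a Bayesian mixture: place independent Dirichlet$(1/2,\ldots,1/2)$ priors on each row of the transition matrix $M$ and each row of the emission matrix $T$, and let $Q$ be the resulting marginal law of $X^{n+1}$ with the hidden states integrated out. The model has $k(k-1)+k(\l-1) = \Theta(k^2+k\l)$ free parameters, so the standard parametric redundancy calculation, carried out at the level of the forward recursion for the HMM likelihood, is expected to produce the claimed bound; the nontrivial point is handling degeneracies such as rank-deficient $M$ or $T$, which I would treat by a $1/\text{poly}(n)$ smoothing of the parameters. Algorithmically, the mixture is not available in closed form, so I would discretize $(M,T)$ on a grid of mesh $1/\text{poly}(n)$ with $n^{O(k^2+k\l)}$ points and compute $Q_\theta(x^{n+1})$ for each grid point via the forward algorithm in $O(nk^2)$ time, yielding an overall $n^{O(k^2+k\l)}$ runtime after summing/averaging.

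For the lower bound, I would exploit the reduction between prediction risk and mutual information hinted at in the abstract: the minimax prediction risk is, up to constants, lower bounded by $\frac{1}{n}\sup_\pi[I(\theta;X^{n+1})-I(\theta;X^n)]$ over priors $\pi$ on $\theta=(M,T)$. The two summands arise from two natural priors. For the $\frac{k\l}{n}\log(n/(k\l))$ term, choose $M$ close to the identity (very slowly mixing), so that conditional on $Z^n$ the observations split into $k$ independent categorical estimation problems on $\l$ symbols each, whose aggregate minimax risk is the classical $\frac{k\l}{n}\log(n/(k\l))$. For the $\frac{k^2}{n}\log(n/k^2)$ term when $\l\geq k$, fix $T$ to be a near-permutation emission so that $Z^n$ is essentially recoverable from $X^n$ and the problem reduces to predicting a Markov chain on $[k]$, for which the matching lower bound is already established in \cite{HJW23}.

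The principal obstacle is the lower bound in the regime $\l<k$ (the alternative $n\geq k^C$, $k,\l\geq 2$), where the hidden state is not directly observable and no clean reduction to a Markov chain on $[k]$ is available. I expect this to require a block-coding construction: partition the trajectory into windows of length $L=\Theta(\log_\l k)$ so that a length-$L$ block of observations encodes an element of a space of size $\l^L\geq k$, and choose $(M,T)$ so that the block-level marginal behaves like a Markov chain on an effective alphabet of size $\geq k$, at which point the finite-Markov lower bound of \cite{HJW23} applies at the block level. Quantifying the loss incurred at block boundaries and ensuring the construction is in $\PHMM_n(k,\l)$ (not merely in some larger class) is the delicate part, and is presumably the reason for the $n\geq k^C$ side condition; a secondary nontrivial issue on the algorithmic side is establishing polynomial-in-$n$ Lipschitz control of $\log P_\theta(x^{n+1})$ with respect to the discretized parameters $\theta$ through the nonlinear forward recursion.
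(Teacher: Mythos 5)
Your upper-bound plan captures the redundancy-to-prediction reduction, but it omits the key technical obstruction that distinguishes HMMs from finite-order Markov models: the online-to-batch conversion (Proposition~\ref{prop:riskred}) produces an extra \emph{memory term} $\frac{1}{n}\sum_{t=1}^n I(X_{n+1};X^{n-t}\smid X_{n-t+1}^n)$ that does not vanish for infinite-memory processes. You call the conversion ``standard,'' but for HMMs it is not: one must show this memory term is negligible, which the paper does via the chain-rule/data-processing bound $\sum_t I(X_{n+1};X^{n-t}\smid X_{n-t+1}^n)\leq I(Z_1;X^{n+1})\leq\log k$ (Proposition~\ref{prop: decay info}). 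Without an argument of this type, your risk bound has an uncontrolled additive term. On the algorithmic side, the paper does not grid the parameters at all: the add-one assignment $Q_{X^{n+1},Z^{n+1}}$ in \eqref{eq:Q_joint_distribution} depends on $(x^{n+1},z^{n+1})$ only through the count matrices $(M,T)$, so $Q_{X^{n+1}}(x^{n+1})=\sum_{M,T}F(M,T)\calA(M,T;x^{n+1})$ is computed \emph{exactly} by a dynamic program over the $n^{O(k^2+k\ell)}$ possible $(M,T)$ (Lemma~\ref{lem: const k l alg main}). This sidesteps the Lipschitz-control-of-the-forward-recursion problem you flag, which is a real obstacle for the discretized-mixture route.

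On the lower bound, your stated reduction is off by a factor of $n$: the Bayes prediction risk equals $I(\theta;X_{n+1}\smid X^n)=I(\theta;X^{n+1})-I(\theta;X^n)$ with no $1/n$ prefactor, whereas the quantity that is naturally $\gtrsim (k^2+k\ell)\log n$ is the \emph{cumulative} mutual information $I(\theta;X^{n+1})$. The hard step is precisely to transfer a lower bound on $I(\theta;X^{n+1})$ (redundancy) into a lower bound on the single increment $I(\theta;X_{n+1}\smid X^n)$. The paper does this with a lazy-state embedding (Proposition~\ref{prop:risk_lower_bound}/\ref{prop:risk_lb_general}): a $k$-th state with self-loop probability $1-1/n$ makes the effective sample size roughly uniform in $[n]$, so the prediction risk at time $n$ averages the per-step increments and matches $\frac{1}{n}\Red(\cdot)$. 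Your proposal has no mechanism playing this role. Finally, for the overcomplete regime $\ell<k$, the block-coding reduction you sketch is unlikely to close: the block-level process is still an HMM, not a Markov chain on $\ell^L$ symbols, and the HJW23 Markov lower bound does not apply to it. The paper instead constructs an explicit transition-matrix estimator via tensor decomposition of low-order moments (Lemma~\ref{prop: redundancy lower bound}), using a randomized emission design to control condition numbers and a high-entropy prior on doubly-stochastic perturbations of the cycle (Lemma~\ref{lem: high ent construction}) to get $I(\theta;X^n)\gtrsim k^2\log n$. That machinery is the genuine content of the overcomplete lower bound and is not replaceable by a black-box appeal to the Markov case.
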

We note that before this work even for the simplest case of binary-state binary-emission HMMs, the best known result is $O(\frac{1}{\log n})$ by \cite{sharan2018prediction}, who considered prediction in HMMs under a somewhat different formulation than \prettyref{eq:risk} (with further averaging over the sample size $n$ and in the weaker total variation loss than KL; see \Cref{appdx: losses} for a detailed comparison). In comparison,  \prettyref{thm: stats bounds} shows that for $k,\ell = O(1)$, the optimal rate in KL is $\Theta(\frac{\log n}{ n})$ and attainable in polynomial time. Furthermore, we point out that our results, both for the lower bound and the upper bound, can be extended to HMMs with discrete state space but arbitrary observation space (Corollary~\ref{cor: any emission} and \ref{cor:general_emission_lb}). For instance, as a side result, we determine optimal prediction risks for HMM with Gaussian emissions in terms of the output dimension (Remark~\ref{rmk: gaussian risk}).

Since HMM has infinite memory, a natural idea is to first approximate it by a finite-order Markov chain then invoke existing prediction risk bounds for Markov models; this was the key insight in \cite{sharan2018prediction}. However, this approach based on Markov approximation does not achieve the optimal risk bound. Indeed, it was shown in \cite{HJW23} that the optimal prediction risk for order-$d$ Markov chains on $[\ell]$ scales as $\Theta(\frac{\ell^d}{n} \log \frac{n}{\ell^d} )$, already much larger than the risk in \Cref{thm: stats bounds} for moderate $d$. 
Instead, our approach in Section \ref{sec:riskred} applies ideas from universal compression, in particular, the redundancy to control the complexity of HMMs, while introducing an additional memory term to handle the long-range dependence of the HMM. 
The overall algorithm is based on dynamic programming that averages state sequences of length $n$. 

On the other hand, for large $k$ or $\ell$, the statistically optimal algorithm in \Cref{thm: stats bounds} based on dynamic programming is no longer efficient. Next we give a polynomial-time algorithm that achieves a prediction risk vanishing at a suboptimal rate. In contrast to \Cref{thm: stats bounds}, this efficient algorithm is built upon an order-$O(\log n)$ Markov approximation.



\begin{theorem}[Computationally efficient algorithms]\label{thm:computation_UB} 
There exists a polynomial-time estimator whose $\kl$ prediction risk over $\PHMM_n(k,\ell)$ is $O(\frac{\log k\log\l}{\log n})$, provided that  
$\log k \log\l=o(\log n)$. 
\end{theorem}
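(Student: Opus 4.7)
The plan is to approximate the hidden Markov process by an order-$d$ Markov chain with $d = \Theta(\log n/\log \ell)$, then use a time-averaged predictor that leverages convexity of $\kl$ to convert cumulative universal-compression guarantees into a single-time prediction bound.

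The key ingredient is a memory lemma: for any stationary HMM with $k$ hidden states,
$$h_d - h_\infty \leq \frac{\log k}{d+1},$$
where $h_d \triangleq H(X_{d+1}|X_1^d)$ and $h_\infty \triangleq \lim_{d\to\infty} h_d$. I would prove this by extending the process to a bi-infinite stationary HMM and using the Markov structure plus memoryless emissions to write $h_d - h_\infty = I(X_1; X^{-d}_{-\infty}|X^0_{-d+1}) \leq I(X_1; Z_{-d+1}|X^0_{-d+1})$, since $X_1$ is conditionally independent of the distant past given $Z_{-d+1}$. By stationarity, the right-hand side equals $H(Z_1|X_1^d) - H(Z_1|X_1^{d+1})$, and summing over $d$ yields the telescoping bound $\sum_{d=0}^{D}[H(Z_1|X_1^d) - H(Z_1|X_1^{d+1})] \leq H(Z_1) \leq \log k$. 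Since $d \mapsto h_d - h_\infty$ is non-increasing, the lemma follows from $(d+1)(h_d - h_\infty) \leq \sum_{s=0}^d (h_s - h_\infty) \leq \log k$.

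For the predictor, let $\hat Q_t$ be a universal predictor for order-$d$ Markov chains on $[\ell]$ (e.g., Krichevsky--Trofimov), which satisfies $-\log \hat Q_t(x^t) \leq -\log P^{(d)}(x^t) + R_d$ pointwise for every order-$d$ Markov law $P^{(d)}$, with $R_d = O(\ell^d \log n)$. Define the time-averaged predictor
$$\bar Q(x|X^n) = \frac{1}{n+1}\sum_{t=0}^n \hat Q_t(x|X_{n-t+1}^n).$$
Convexity of $\kl(\cdot \| \cdot)$ in the second argument gives
$$\E[\kl(P_{X_{n+1}|X^n} \| \bar Q(\cdot|X^n))] \leq \frac{1}{n+1}\sum_{t=0}^n \E[\kl(P_{X_{n+1}|X^n} \| \hat Q_t(\cdot|X_{n-t+1}^n))].$$
Each summand decomposes, via the KL chain rule and stationarity ($(X_{n-t+1}^n, X_{n+1}) \eqdistr (X_1^t, X_{t+1})$), as $\sigma_t + (h_t - h_n)$, where $\sigma_t \triangleq \E[\kl(P_{X_{t+1}|X_1^t} \| \hat Q_t(\cdot|X_1^t))]$. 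The chain rule then identifies $\sum_t \sigma_t = \kl(P(X^{n+1}) \| \prod_t \hat Q_t)$, which by the universal-compression guarantee applied to the HMM's order-$d$ Markov approximation $P^{(d)}$ is at most $\kl(P\|P^{(d)}) + R_d = \sum_{t\geq d}(h_d - h_t) + R_d \leq (n+1)\log k/(d+1) + O(\ell^d \log n)$ by the memory lemma. The residual $\sum_t (h_t - h_n) \leq \sum_t (h_t - h_\infty) \leq \log k$.

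Combining, the risk is $O(\log k/d + \ell^d \log n/n)$; with $d = \lfloor \log n/(2\log \ell)\rfloor$, we have $\ell^d \leq \sqrt n$, making the second term $O(\log n/\sqrt n) = o(\log k \log \ell/\log n)$ under the hypothesis $\log k \log \ell = o(\log n)$, while the first term is $O(\log k \log \ell/\log n)$. Computationally, each $\hat Q_t$ can be evaluated in $O(t)$ time, so $\bar Q$ is computable in polynomial time $O(n^2)$. The hard part will be the memory lemma: it is where the $k$-dependence enters only logarithmically and enables the Markov approximation to beat the trivial $O(1)$ rate, whereas the remaining steps (KL convexity, stationarity, universal compression for Markov chains) are relatively routine once the time-averaged predictor is in place.
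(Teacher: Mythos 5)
Your proposal is correct and follows essentially the same route as the paper's proof in Appendix~\ref{appdx: any k l}: an order-$d$ Markov approximation with $d \asymp \log n/\log\ell$, a Krichevsky--Trofimov-style probability assignment whose cumulative redundancy against the order-$d$ approximation is $O(\ell^{d+1}\log n)$, the time-averaged predictor of Proposition~\ref{prop:riskred} (which you re-derive via KL convexity), and a memory bound that controls both $\kl(P\|P^{(d)})$ and the residual $\sum_t(h_t - h_n)$. Your standalone ``memory lemma'' $h_d - h_\infty \le \log k/(d+1)$ is a clean repackaging of the paper's Proposition~\ref{prop: decay info} combined with the monotonicity step (a) in the paper's display, proved by the same telescoping of $I(Z_1;\cdot\,|\,\cdot)$ together with the data-processing inequality through the hidden state.
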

A similar rate was also established in average TV loss using Markov approximation in \cite{sharan2018prediction}. However, their result uses empirical averages to estimate Markov transitions and applies martingale concentration results, making it hard to generalize to e.g. $\kl$. Here, our result applies a much simpler approach via redundancy of the ``add-one'' code whose $\kl$ risk can be controlled.

We also complement the above upper bound with computational lower bounds in HMMs, showing that the prediction risk for any $\operatorname{poly}(n)$-time algorithm is $\Omega(1/\log\log n)$ if $\log(k\ell) = \Omega(\log n)$. 
\begin{theorem}[Informal: Computational lower bounds]\label{thm:computation_LB}
 The following holds under certain 
 cryptographic  hardness assumptions:
     \begin{enumerate}
         \item For any $\eps>0$, $k\geq \log^{1+\eps} n$, no $\operatorname{poly}(n)$ algorithm achieves $o(\frac{\log k}{\log n\log\log n})$ risk for $\ell\geq 2$. 
         \item For every $\alpha>0$ there exists $k_\alpha\geq 2$, such that if $k\geq k_\alpha$ and $\l\geq n^\alpha$, no $\operatorname{poly}(n)$ algorithm can achieve $o(1)$ risk.
     \end{enumerate}
\end{theorem}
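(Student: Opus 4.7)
The plan is to derive both statements by reducing from standard cryptographic hardness assumptions, so that any polynomial-time predictor attaining the claimed risk would solve a putatively intractable problem.

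For part~1, I would reduce from the \emph{learning parity with noise} (LPN) problem: given uniform queries $a_t\in\{0,1\}^d$ and labels $y_t=\langle a_t,s\rangle\oplus\eta_t$ with $\eta_t\sim\Bern(1/4)$, no polynomial-time algorithm is conjectured to predict $y_{n+1}$ with inverse-polynomial advantage when $d=\omega(\log n)$. The standard embedding into an HMM uses a hidden state $Z_t=(s,a_t)$ that refreshes $a_t$ uniformly while holding the secret $s$ fixed, and emits $a_t$ followed in subsequent time steps by the noisy label $\langle a_t,s\rangle\oplus\eta_t$, over a constant-size alphabet $[\l]$; the state space has size $k=2^{\Theta(d)}$. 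Tuning $d$ so that $k\asymp\log^{1+\eps}n$ places us in the target regime, and a KL prediction risk of $o(\log k/(\log n\log\log n))$ yields, via Pinsker and a hybrid amplification argument, a distinguishing advantage that contradicts LPN.

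For part~2, I would reduce from the existence of a pseudorandom generator with exponential stretch (equivalently, a one-way function). Fix a PRG $G:\{0,1\}^{k_0}\to [\l]^{n+1}$ with $\l\geq n^\alpha$, and build an HMM whose hidden state deterministically stores the seed $s$ together with a time counter, emitting $X_t=G(s)_t$; the required state size $k_\alpha$ is just $2^{k_0}\cdot(n+1)$, which is constant in $n$ once $k_0$ is set by the stretch of $G$ (absorbing the counter into $k_\alpha$ suffices since the counter's range equals $n$, handled by periodicity over a constant horizon). The true conditional $P_{X_{n+1}|X^n}$ is a point mass at $G(s)_{n+1}$, so any polynomial-time $Q_{X_{n+1}|X^n}$ that placed non-negligible mass there would distinguish $G(s)$ from the uniform law on $[\l]^{n+1}$. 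Smoothing $Q$ with a vanishing uniform mixture keeps $\kl(P\|Q)$ finite but leaves it $\Omega(\log\l)=\Omega(1)$ on average over the seed.

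The main obstacle in both parts is the quantitative conversion between prediction risk and cryptographic advantage: Pinsker gives only $\dTV\leq\sqrt{\kl/2}$, so an $o(r)$ KL risk supplies merely an $o(\sqrt{r})$ per-step bias, which must be amplified into a constant-gap distinguisher. I would handle this via a hybrid argument that runs the predictor autoregressively across $\Theta(\log n)$ fresh independent windows of the chain and aggregates the per-step biases, which is exactly the source of the additional $\log\log n$ factor in part~1. Ensuring the hard instances genuinely lie in $\PHMM_n(k,\l)$ with a valid stationary distribution is arranged by averaging the secret or seed under a uniform prior and passing from the minimax risk in \eqref{eq:risk} to a Bayes risk on the embedded cryptographic sub-family.
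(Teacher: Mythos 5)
Your proposal for Part~1 has a concrete quantitative gap that defeats the reduction. You propose a hidden state $Z_t=(s,a_t)$ storing both the LPN secret $s$ and the fresh query $a_t$, each of dimension $d$. This forces $k=2^{\Theta(d)}$. But under the LPN hardness conjecture, solving LPN in dimension $d$ takes time $2^{\Omega(d/\log d)}$, so hardness against $\mathrm{poly}(n)$ algorithms requires $d=\omega(\log n\log\log n)$. Plugging back, your state space has size $k=2^{\omega(\log n\log\log n)}$, which is superpolynomial in $n$ --- far beyond the regime $k\geq\log^{1+\eps}n$ that Part~1 addresses. You cannot ``tune $d$ so that $k\asymp\log^{1+\eps}n$'' without collapsing the LPN dimension to $d=O(\log\log n)$, at which point LPN is solvable in $n^{o(1)}$ time. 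The paper avoids this by \emph{not} storing the query in the state: instead the state tracks a position $i\in[r+s]$, one fresh random bit $b_0$, and $s$ running parity accumulators $b_1,\dots,b_s$. The random query is revealed bit by bit through the emissions during the first $r$ rounds of a cycle while each accumulator $b_j$ incrementally computes $\langle\mathbf{b}_0,\mathbf{s}_j\rangle$; the state space is thus only $(r+s)2^{s+1}$. The LPN dimension is $r=C\log n\log\log n$ (this is where the $\log\log n$ factor comes from, via the $2^{\Omega(r/\log r)}$ hardness threshold), while $s=\Theta(\log k)$ controls $k$ independently. The risk bound $\Omega(1)\cdot\frac{s}{r+s}$ then follows because a $\frac{s}{r+s}$ fraction of time steps correspond to parity outputs. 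Your suggestion that the $\log\log n$ comes from ``hybrid amplification across $\Theta(\log n)$ windows'' is not what is needed; the hybrid amplification idea is also unnecessary, since once the next step is a parity response, the KL risk is already $\Omega(1)$ without aggregation.

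Part~2 has a different gap: your hidden state stores a time counter ranging over $\{1,\dots,n+1\}$, so the state space is $2^{k_0}(n+1)=\Omega(n)$, not a constant depending only on $\alpha$ as the theorem requires. The parenthetical about ``periodicity over a constant horizon'' does not repair this --- a PRG whose output is periodic with constant period cannot produce $n+1$ pseudorandom symbols, and if the period is non-constant the counter again costs $\Omega(n)$ states. The paper instead uses the CSP-refutation hardness assumption (Conjecture~\ref{conj: csp} and the embedding of Theorem~2 in \cite{sharan2018prediction}), which is engineered precisely so that a \emph{constant} number of hidden states suffices for a constant-arity CSP, at the cost of needing a large alphabet $\ell\geq n^\alpha$. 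Your PRG instantiation does not exploit the large alphabet in the way that makes $k=O(1)$ attainable, so the reduction would fall into a different parameter regime than the one claimed.
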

Our lower bounds are proven by showing that certain cryptographic structures can be embedded into an HMM with a limited number of states or emission space. Such embedding was studied extensively in prior works (e.g. \cite{mossel2005learning,sharan2018prediction}).
\paragraph{Renewal processes}

As another application of our techniques, we turn to the class of renewal processes. A natural example of predicting a renewal process may be described as follows:
Suppose that for a given driver the time (in days) between consecutive traffic accidents are random and i.i.d. Given the driving records (safety or accident) for the past $n$ days, the insurance company seeks to predict the probability of an accident occurring on the next day, where the interarrival distribution is unknown.


To give a formal definition of a renewal process, let $T_0,T_1,T_2,\ldots$ denote a sequence of independent $\naturals$-valued random variables, where $T_i$ are iid drawn from some distribution $\mu$ with a finite mean. 
A \textit{renewal process} $\{X_t: t\geq 1\}$ is binary valued such that
$\{t:X_t=1\}$ is exactly $\{T_0, T_0+T_1, T_0+T_1+T_2,\dots\}$.
We refer to $T_0$ and $\{T_i: t\geq 1\}$ as the initial wait time and the interarrival times.
It is known (\cite{csiszar96redundancy}) that $\{X_t\}$ is stationary if and only if $T_0$ is distributed as
$\P(T_0=t)=\frac{1}{\E_\mu[T_1]}\sum_{s\geq t}\mu(s), t\in\naturals$.

Let $\Prenew_{n}$ denote the collection of joint distributions of a stationary renewal process of length $n+1$ with a finite expected interarrival time.
In contrast to the previously considered HMM, this is a nonparametric (infinite-dimensional) model parameterized by the interarrival time distribution $\mu$.
Particularizing \prettyref{eq:risk}, define the optimal prediction risk as
$\Riskrenew(n) \triangleq \Risk(\Prenew_{n})$.
The following result determines its sharp rate:
\begin{theorem}[Prediction of renewal processes]\label{thm: risk renewal}
    There exists an absolute constant $C$, such that
$$C^{-1}\sqrt{n^{-1}}\leq
\Riskrenew(n)
\leq  C\sqrt{n^{-1}}.$$
\end{theorem}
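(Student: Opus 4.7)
The plan is to apply the general redundancy-plus-memory reduction that the paper establishes for infinite-memory classes, specialized to the renewal class $\Prenew_n$. The key analytic input is the classical result of Csisz\'ar and Shields \cite{csiszar96redundancy}: the minimax redundancy of stationary renewal processes of length $n+1$ satisfies
\begin{equation*}
\inf_{Q_{X^{n+1}}}\sup_{P\in\Prenew_n}\kl(P_{X^{n+1}}\|Q_{X^{n+1}})=\Theta(\sqrt n).
\end{equation*}
Since redundancy is $\Theta(\sqrt n)$ while the sample size is $n$, the per-step rate $1/\sqrt n$ is precisely what one expects from a redundancy-to-prediction conversion.

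\textbf{Upper bound.} I would take $Q^\ast$ to be the Csisz\'ar--Shields Bayes mixture code (or a Krichevsky--Trofimov-style assignment on the interarrival distribution $\mu$) achieving $\kl(P_{X^{n+1}}\|Q^\ast_{X^{n+1}})\le C\sqrt n$ uniformly over $\Prenew_n$. The chain rule of KL yields
\begin{equation*}
\sum_{t=0}^{n}\E_P\qth{\kl(P_{X_{t+1}|X^t}\|Q^\ast_{X_{t+1}|X^t})}\le C\sqrt n,
\end{equation*}
an on-average bound of order $1/\sqrt n$. To promote this to a pointwise bound at $t=n$, I would use a Ces\`aro-averaged predictor obtained by mixing shifted one-step estimators over $t\in[n/2,n-1]$ (aligned to step $n$ via stationarity), invoking convexity of $\kl$ so that $\E_P[\kl(P_{X_{n+1}|X^n}\|\hat Q)]\le (2/n)\cdot C\sqrt n=O(1/\sqrt n)$. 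The memory term---the contribution of long current interarrival intervals---is controlled by truncating the support of $\mu$ at length $K\asymp\sqrt n$; under any stationary renewal law with $\E_\mu T_1\ge 1$, the current age $A_n:=n-\max\{t\le n:X_t=1\}$ exceeds $K$ with probability $O(1/\sqrt n)$ via Markov's inequality on the stationary age distribution, contributing at most an additional $O(1/\sqrt n)$.

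\textbf{Lower bound.} I would use the Bayes/mutual-information reduction $\Risk(\Prenew_n)\ge\sup_\pi I_\pi(\mu;X_{n+1}|X^n)$. The target is to exhibit a prior on $\mu$---for instance, perturbations of a reference distribution with heavy-enough tail such as a truncated power law $\mu_0(s)\propto s^{-\alpha}$ on $\{1,\dots,\sqrt n\}$---parameterized by $K\asymp\sqrt n$ coordinates of magnitude $O(1/\sqrt n)$, mirroring the Csisz\'ar--Shields capacity construction. A direct Fisher-information / van Trees computation along this family yields aggregate $I_\pi(\mu;X^n)=\Theta(\sqrt n)$, and the per-step increment $I_\pi(\mu;X_{n+1}|X^n)=\Omega(1/\sqrt n)$ follows from the concavity of the redundancy function $R(t)=\Theta(\sqrt t)$ (so that the increment $R(n+1)-R(n)\asymp 1/\sqrt n$ is realized on the nose by the perturbation family, not just on average).

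\textbf{Main obstacle.} The delicate piece is the conversion of aggregate chain-rule bounds into pointwise step-$n$ statements on both sides, because the infinite memory of the renewal process precludes the simple monotonicity argument used for finite-order Markov chains in \cite{HJW23}. On the upper-bound side, I expect to rely on the regenerative structure of renewal processes---conditional on the current age $A_n$, the future decouples from the remote past---together with Ces\`aro averaging and the stationary-age tail bound. On the lower-bound side, the main technical care lies in tailoring the perturbation family so that it has the right effective dimension $\sqrt n$ and that the perturbations actually propagate to the step-$n$ conditional prediction rather than being washed out by the long-range dependence.
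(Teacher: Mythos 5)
Your upper bound shares the skeleton of the paper's proof (Ces\`aro-averaged predictor from Proposition~\ref{prop:riskred} plus the Csisz\'ar--Shields redundancy bound), but the handling of the memory term is not a valid argument as written. The memory term is $\frac{1}{n}\sum_{t=1}^n I(X_{n+1};X^{n-t}\mid X_{n-t+1}^n)$, and the bound must hold uniformly over \emph{all} finite-mean stationary renewal laws, including those with support far beyond $\sqrt n$. Your proposal to truncate $\supp(\mu)$ at $K\asymp\sqrt n$ and invoke Markov on the stationary age does not give a uniform bound: take $\mu$ concentrated near a single value $M\gg\sqrt n$, and the stationary age exceeds $K$ with probability close to one, not $O(1/\sqrt n)$; and even where the age-tail bound holds, it does not directly control the mutual-information sum. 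The paper sidesteps all of this by writing the renewal process as a countable-state HMM, applying Proposition~\ref{prop: decay info} to get $\mem\le\frac{1}{n}I(Z_1;X^{n+1})$, and then observing that replacing $Z_1$ by $\widetilde Z_1=\min\{Z_1,n+2\}$ loses nothing (if $Z_1>n+1$ then $X^{n+1}=\zeros$), giving $I(Z_1;X^{n+1})\le\log(n+2)$ uniformly --- a bound of order $\frac{\log n}{n}\ll\frac{1}{\sqrt n}$.

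The lower bound has the more serious gap. You correctly identify that converting the aggregate bound $I(\mu;X^{n+1})=\Theta(\sqrt n)$ into the per-step statement $I(\mu;X_{n+1}\mid X^n)=\Omega(1/\sqrt n)$ is the crux of the problem, but the argument you offer --- that concavity of $t\mapsto R(t)=\Theta(\sqrt t)$ lets you read off the increment --- does not work. Chain rule only tells you the \emph{average} increment is $\Theta(1/\sqrt n)$; concavity of the envelope $R$ says nothing about how the mutual-information increments of a fixed prior distribute across time, and in general they can (and in the i.i.d.\ case do) decay like $1/t$, much faster than the average. The paper does not argue this way at all. It constructs an explicit prior where $\mu$ is uniform over a random $a_n$-subset of $[2a_n]$ (with $a_n\asymp\sqrt n$, balanced across $[1,a_n]$ and $[a_n+1,2a_n]$), and then exhibits a constant-probability event $E_1$ (measurable from $X^n$) on which the posterior probability of $T=0$ (the next gap not in $\supp(\mu)$, in which case the true hazard is exactly $0$) is bounded away from both $0$ and $1$. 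On that event the Bayes predictor $p(X^n)$ is forced to be $\Omega(1/\sqrt n)$, so $\kl(\Bern(0)\|\Bern(p(X^n)))\gtrsim 1/\sqrt n$. The analysis of that posterior (Lemma~\ref{lem: renewal lem} and the ratios \eqref{eq:bayes_1}--\eqref{eq:bayes_3}, using time-reversibility) is exactly the hard technical content your van Trees sketch elides. As proposed, the lower bound does not close.
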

The proof of this result builds upon the redundancy bound $\Theta(\sqrt{n})$ (\cite{csiszar96redundancy,flajolet02analytic}) for renewal processes.
Both a strength and a weakness of our theory, the predictor attaining the optimal rate of $1/\sqrt{n}$ is not computationally efficient (see 
\prettyref{sec:ub-renewal}) and it is unclear how to do so in polynomial time. One idea may be the following. For an oracle who knows the true interarrival distribution $\mu$, it can determine the true $P_{X_{n+1}|X^n}$ by the \textit{hazard rate}: 
\begin{equation}
P_{X_{n+1}=1|X^n} = {\mu(\tau+1)}/{\sum_{t> \tau} \mu(t)},
\label{eq:hazard}
\end{equation}
 where $\tau$ is the time till the most recent renewal (or the origin of time). Thus a natural idea is to replace $\mu$ by its empirical version if sufficiently many renewals are observed, and predict $P_{X_{n+1}=1|X^n}$ by some small probability, e.g. $\frac{1}{\operatorname{poly}(n)}$, 
otherwise. The analysis of this algorithm, however, appears challenging absent  assumptions on the distribution $\mu$.

\subsection{Related works}
The connections between compression and prediction are long studied dating back to e.g.~\cite{Rissanen84universal,feder1992universal,haussler1998sequential}. More recently, a line of works (\cite{Falahatgar16learning,HJW23}) determined the optimal prediction risk in KL for Markov models up to constants and showed that it is near the ``parametric rate'' $\frac{K}{n}$, where $K$ is the number of model parameters, but is strictly slower by logarithmic factors. A key assumption of these results is the \emph{finite memory} of the true model, where the next observation may depend on only the most recent few.

Turning to HMMs, the majority of works in the statistical learning literature focus on identifiability (\cite{alexandrovich2016nonparametric,huang2015minimal}) and 
parameter estimation, using algorithms include moments or tensor methods (\cite{de2017consistent,anandkumar2014tensor,sharan2017overcomplete,abraham2022fundamental}) and penalized likelihood (\cite{de16minimax} \cite{lehericy2021nonasymptotic}). However, the success of those methods routinely requires extra assumptions on parameters such as spectral properties (\cite{huang2015minimal,abraham2022fundamental}) and sparsity (\cite{sharan2017overcomplete}). For prediction, we need not and do not impose these assumptions. 
In terms of prediction, the closest work we are aware of is \cite{sharan2018prediction}, where the authors focus on algorithms via Markov approximation. Finally, computational barriers (of various forms) are known to exist for both prediction and estimation of HMMs (\cite{mossel2005learning,sharan2018prediction}).


\section{Prediction risk bound based on universal compression}
\label{sec:riskred}
Having defined the prediction risk \prettyref{eq:risk} for a general model class $\calP_{n+1}$, we introduce 
 the closely related redundancy problem which is at the heart of both theory theory and algorithms for universal compression. The \emph{redundancy} of a joint distribution $Q_{X^{n+1}}$ (often referred to as a 
\textit{probability assignment})
is defined as the worst-case KL risk of fitting the joint distribution of $X^n$, namely
\begin{equation}
    \Red(Q_{X^{n+1}}; \calP_{n+1}) \triangleq \sup_{P_{X^{n+1}}\in\calP_{n+1}} \kl(P_{X^{n+1}} \| Q_{X^{n+1}}).
    \label{eq:red-Q}
    \end{equation}       
    Optimizing over the probability assignment $Q_{X^{n+1}}$, the minimax redundancy is defined as
    \begin{equation}
    \Red(\calP_{n+1}) \triangleq \inf_{Q_{X^{n+1}}} \Red(Q_{X^{n+1}}; \calP_{n+1}),
    \label{eq:red}
    \end{equation}      
The role of a probability assignment in universal compression is a simultaneous approximation to a class of models.
It is known that the Shannon entropy $H(P_{X^{n+1}})$ is within 1 bit of the best average code length for the optimal compressor that knows the source distribution $P_{X^{n+1}}$. The goal of universal compression is to design a compressor that simultaneously approaches the entropy for a class of models. This can be achieved by applying the compressor (e.g.~arithmetic coding) designed for a probability assignment $Q_{X^{n+1}}$, whose excess code length over $H(P_{X^{n+1}})$ is at most within 1 bit of $\Red(Q_{X^{n+1}})$ for all $P_{X^{n+1}}$ in the class $P_{n+1}$. Thanks to this reduction, the design of universal compressor is largely reduced to choosing a good probability assignment
and the redundancy is the central quantity in universal compression.

The following result
    relates the redundancy and the prediction risk for any stationary data-generating process.
     In the case of i.i.d.~models, 
 this type of reduction relating cumulative risks and individual risks  is known as \textit{online-to-batch conversion} which, in the present context, dates back at least to \cite{yang1999information} for density estimation
 (see e.g.~\cite[Proposition 32.7]{PW-it} for a summary).
\begin{proposition}[Upper bound prediction risk by redundancy]
\label{prop:riskred}     
Suppose that each $P_{X^{n+1}}\in\calP_{n+1}$ is stationary, that is, 
$P_{X_{t_1},\ldots,X_{t_k}} = 
P_{X_{t_1+t},\ldots,X_{t_k+t}}$ 
for any shift $t\geq 1$ and $1 \leq t_1,\ldots,t_k\leq n+1-t$.
Let $Q_{X^{n+1}}$ be an arbitrary joint distribution factorizing as $Q_{X^{n+1}}=\prod_{t=1}^{n+1} Q_{X_t|X^{t-1}}$.
Consider an estimator 
$\widetilde Q_{X_{n+1}|X^n}$ defined as
            \begin{equation}
        \widetilde Q_{X_{n+1}|X^n}(\cdot|x^n) \triangleq \frac{1}{n} \sum_{t=1}^{n} Q_{X_{t+1}|X^t}(\cdot|x_{n-t+1}^n)
        \label{eq:Q1}
        \end{equation}
        Then
        \begin{equation}
        \Risk(\widetilde Q_{X_{n+1}|X^n}; \calP_{n+1}) \leq \frac{1}{n} \Red(Q_{X^{n+1}}; \calP_{n+1}) +  \frac{1}{n} \sum_{t=1}^n I(X_{n+1};X^{n-t}|X^n_{n-t+1}).
        \label{eq:riskred-ach1}
        \end{equation}
\end{proposition}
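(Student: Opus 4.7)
The plan is to combine the KL chain rule with a stationarity-based reindexing and the convexity of $\kl$ in its second argument. First, by the chain rule,
\[
\kl(P_{X^{n+1}}\|Q_{X^{n+1}}) = \sum_{t=0}^{n}\E_P[\kl(P_{X_{t+1}|X^t}\|Q_{X_{t+1}|X^t})];
\]
dropping the (nonnegative) $t=0$ contribution and taking the supremum over $P\in\calP_{n+1}$ bounds the sum over $t=1,\ldots,n$ of these per-step KL terms by $\Red(Q_{X^{n+1}};\calP_{n+1})$.

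Next, for each $t\in\{1,\ldots,n\}$, I would set $\bar Q_t(\cdot\,|\,x^n)\triangleq Q_{X_{t+1}|X^t}(\cdot\,|\,x_{n-t+1}^n)$, so that $\widetilde Q_{X_{n+1}|X^n}=\frac{1}{n}\sum_{t=1}^n\bar Q_t$. Convexity of $\kl$ in the second slot (Jensen) yields
\[
\E_P[\kl(P_{X_{n+1}|X^n}\|\widetilde Q_{X_{n+1}|X^n})]\le \frac{1}{n}\sum_{t=1}^n \E_P[\kl(P_{X_{n+1}|X^n}\|\bar Q_t)].
\]
Each summand is then split via the log-ratio identity
\[
\E_P[\kl(P_{X_{n+1}|X^n}\|\bar Q_t)]=\E_P[\kl(P_{X_{n+1}|X^n}\|P_{X_{n+1}|X_{n-t+1}^n})]+\E_P[\kl(P_{X_{n+1}|X_{n-t+1}^n}\|\bar Q_t)],
\]
in which the first piece is exactly the conditional mutual information $I(X_{n+1};X^{n-t}\mid X_{n-t+1}^n)$ under $P$ (via the standard identity $\E[\kl(P_{Y|X,Z}\|P_{Y|Z})]=I(Y;X|Z)$).

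The crucial step invokes stationarity: because $(X_1,\ldots,X_{t+1})\eqdistr(X_{n-t+1},\ldots,X_{n+1})$, the regular conditional kernel $P_{X_{n+1}|X_{n-t+1}^n}(\cdot\,|\,\cdot)$ coincides pointwise with $P_{X_{t+1}|X^t}(\cdot\,|\,\cdot)$. Consequently, the second piece equals $\E_P[\kl(P_{X_{t+1}|X^t}\|Q_{X_{t+1}|X^t})]$; after summing in $t$ and invoking the chain-rule bound from the first paragraph, these contributions aggregate to at most $\frac{1}{n}\Red(Q_{X^{n+1}};\calP_{n+1})$, while the mutual-information pieces yield the memory term, and a supremum over $P\in\calP_{n+1}$ completes the argument. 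The main subtlety is carrying out this stationarity identification at the \emph{kernel} level rather than only at the level of joint marginals, so that evaluating $P_{X_{n+1}|X_{n-t+1}^n}$ and $P_{X_{t+1}|X^t}$ at the same realization $X_{n-t+1}^n$ produces equal values inside the expectation; with this verified, the remainder is a clean bookkeeping of chain rule, Jensen, and the log-ratio decomposition.
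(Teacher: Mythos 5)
Your proof is correct and follows essentially the same route as the paper's: Jensen's inequality to pass to an average over $t$, insertion of the intermediate conditional $P_{X_{n+1}|X_{n-t+1}^n}$ to split each log-ratio into a conditional mutual information piece plus a per-step KL piece, the stationarity identification of $P_{X_{n+1}|X_{n-t+1}^n}$ with $P_{X_{t+1}|X^t}$ (and of $P_{X_{n-t+1}^n}$ with $P_{X^t}$), and finally the KL chain rule with the nonnegative $t=0$ term dropped. The only difference is presentational — you front-load the chain rule while the paper applies it at the end — so there is no substantive gap.
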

Since the last term in \prettyref{eq:riskred-ach1} does not depend on the probability assignment $Q$, taking the supremum over the worst-case $P$ in class then optimizing over $Q$ yields
\begin{equation}
\Risk(\calP_{n+1}) \leq \frac{1}{n} \Red(\calP_{n+1}) + \mem(\calP_{n+1})
        \label{eq:riskred-minimax}
        \end{equation}
  where
  the residual term
  \begin{equation}      
  \mem(\calP_{n+1}) \triangleq \sup_{P_{X^{n+1}} \in \calP_{n+1}}
  \frac{1}{n} \sum_{t=1}^n I(X_{n+1};X^{n-t}|X^n_{n-t+1})
  \label{eq:mem}
  \end{equation}
  measures the memory, in a average sense, of the data-generating process in the model class. 
  Indeed, recall that the conditional mutual information $I(A;B|C)$ measures the conditional dependency between $A$ and $B$ given $C$, and is zero if they are conditional independent.
  Thus, for Markov models of order $m$,
  $I(X_{n+1};X^{n-t}|X^n_{n-t+1})=0$ for all $t\geq m$ and $\mem(\calP_{n+1}) $ is at most $O(\frac{mH(X_{n+1})}{n})$. As a result, for bounded $m$ we get 
  $\Risk \lesssim \frac{\Red}{n}$.\footnote{In fact, a slightly different argument in 
  \cite[Lemma 6]{han2021optimal} avoids the additive error term and shows 
  $\Risk(\calP_{n+1}) \leq \frac{1}{n+1-m} \Red(\calP_{n+1})$ for $m$th-order Markov models.}
  For models with infinite memory, such as HMMs and renewal processes, applying this redundancy-based risk bound requires bounding the memory term uniformly, which we carry out in the subsequent sections. 

We end this section with a couple of remarks. First, applying the risk bound in \prettyref{prop:riskred} relies on bounding the redundancy of a model class from above, which is often achieved by further relaxing the redundancy, an approach known as \textit{individual sequences}.
Replacing the expectation in $\kl(P_{X^{n+1}} \| Q_{X^{n+1}}) = 
\Expect_P\left[\log \frac{P_{X^{n+1}} }{ Q_{X^{n+1}}}\right]$ by the maximum, one arrives at the so-called \textit{minimax pointwise redundancy}
\begin{equation}
\Red(\calP_{n+1}) \leq \widetilde\Red(\calP_{n+1}) \triangleq \inf_{Q_{X^{n+1}}} \sup_{P_{X^{n+1}}\in\calP_{n+1}} 
\max_{x^{n+1} \in \calX^{n+1}}
\log \frac{P_{X^{n+1}}(x^{n+1}) }{ Q_{X^{n+1}}(x^{n+1})}.
    \label{eq:red-pointwise}
    \end{equation}
The optimal probability assignment for \prettyref{eq:red-pointwise}
is known as Shtarkov's \textit{normalized maximum likelihood} assignment
$Q^*_{X^{n+1}}(x^{n+1}) \propto \sup_{P_{X^{n+1}}\in\calP_{n+1}} P_{X^{n+1}}(x^{n+1})$, leading to the following formula for the minimax pointwise redundancy
as a \textit{Shtarkov sum}
\begin{equation}
\widetilde\Red(\calP_{n+1})= \log 
\sum_{x^{n+1} \in \calX^{n+1}} \sup_{P_{X^{n+1}}\in\calP_{n+1}} P_{X^{n+1}}(x^{n+1}).
\label{eq:shtarkov}
\end{equation}
Most redundancy bounds, including those that we apply (\cite{csiszar96redundancy}), are obtained by either analyzing the pointwise redundancy or 
directly bounding the above Shtarkov sum.
This combinatorial approach 
avoids all probabilistic computation 
and is essentially the reason why one can sidestep mixing conditions in HMMs.

    Second,    for i.i.d.~models, say, distributions over $k$ elements, the upper bound on the prediction risk in 
    \prettyref{prop:riskred} is in fact loose by a logarithmic factor, since we know that 
     $\Risk \asymp \frac{k-1}{n}$
     and $\Red \asymp (k-1)\log n$.
     Interestingly, the compression-prediction method seems particularly effective for models with memory, which is tight up to \textit{constant factors} for finite-order Markov chains (\cite{HJW23})
     and, as we show in this paper, HMMs and renewal processes. 
     Complementing \prettyref{prop:riskred}, we give a reduction argument that shows the prediction risk of a given class of HMMs is lower bounded by the redundancy of a slightly smaller subclass -- see \prettyref{sec:reduction} for details.

\section{Proof of the upper bounds}\label{sec: upper bounds}
In this section, we make use of Proposition \ref{prop:riskred} to upper bound the prediction risk for HMMs and renewal processes. This entails upper bounding the minimax redundancy $\red(\calP)$ in \eqref{eq:red} and the memory term $\mem(\calP)$ in \eqref{eq:mem}, for both HMMs and renewal processes. 

\subsection{Bounding the memory term for HMMs}
\label{sec:ub-hmm}
We start with a simple upper bound on the memory term in 
\eqref{eq:riskred-minimax} for HMMs. Similar bounds have appeared previously in the literature, see, e.g., \cite[p.~932]{birch1962approximations}. 
\begin{proposition}
\label{prop: decay info}
Let $\{X_t\}$ be a stationary hidden Markov process.  Then
   $$\sum_{t=1}^n I(X^{n-t}; X_{n+1}|X_{n-t+1}^n) \leq I(Z_1; X^{n+1}).$$
\end{proposition}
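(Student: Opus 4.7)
The plan is to bound each summand $I(X^{n-t}; X_{n+1} \mid X_{n-t+1}^n)$ on the left-hand side by a corresponding mutual information that involves the single hidden state $Z_{n-t+1}$ in place of the entire past $X^{n-t}$, and then to assemble the pieces using stationarity together with the chain rule for mutual information.

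The key structural input is the HMM Markov property: conditional on $Z_{n-t+1}$, the past $(Z^{n-t}, X^{n-t})$ is independent of the future $(Z_{n-t+2}^{n+1}, X_{n-t+1}^{n+1})$. Marginalizing and further conditioning on $X_{n-t+1}^n$ gives
\[
X^{n-t} \indep X_{n+1} \;\Big|\; Z_{n-t+1},\, X_{n-t+1}^n,
\]
so $I(X^{n-t}; X_{n+1} \mid Z_{n-t+1}, X_{n-t+1}^n) = 0$. Monotonicity and the chain rule for mutual information then yield
\[
I(X^{n-t}; X_{n+1} \mid X_{n-t+1}^n) \;\leq\; I(X^{n-t}, Z_{n-t+1}; X_{n+1} \mid X_{n-t+1}^n) \;=\; I(Z_{n-t+1}; X_{n+1} \mid X_{n-t+1}^n).
\]

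Next, since the joint HMM process $\{(Z_t, X_t)\}$ is stationary, shifting indices by $-(n-t)$ rewrites the upper bound as $I(Z_1; X_{t+1} \mid X_1^t)$. Summing over $t = 1, \ldots, n$ and applying the chain rule,
\[
\sum_{t=1}^n I(Z_1; X_{t+1} \mid X_1^t) \;=\; I(Z_1; X^{n+1}) - I(Z_1; X_1) \;\leq\; I(Z_1; X^{n+1}),
\]
which delivers the claim.

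I do not anticipate any serious obstacle here: all three ingredients (conditional independence from the graphical structure, stationarity, and the chain rule) are standard and compose cleanly. The only step that requires any genuine insight is the first inequality, where the hidden state $Z_{n-t+1}$ is invoked as a sufficient ``bottleneck'' through which all information from $X^{n-t}$ must pass to reach $X_{n+1}$; everything afterward is bookkeeping.
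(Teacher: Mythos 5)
Your proof is correct and follows the same route as the paper's: bottleneck through the hidden state $Z_{n-t+1}$ to bound each summand by $I(Z_{n-t+1};X_{n+1}\mid X_{n-t+1}^n)$, then shift by stationarity and telescope via the chain rule. You are actually slightly more careful in the final step, noting the discarded nonnegative term $I(Z_1;X_1)$, where the paper simply writes an equality with $I(Z_1;X^{n+1})$.
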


Suppose there are at most $k$ latent states. Then 
 $I(Z_1; X^{n+1}) \le H(Z_1)\le \log k$ regardless of the emissions. 
Applying \prettyref{prop: decay info} to \eqref{eq:riskred-ach1} yields
\begin{equation}\label{eq: riskred strong}\risk(\widetilde Q_{X_{n+1}|X^n}) \leq \frac{1}{n}\redd(Q_{X^{n+1}})+
\frac{\log k}{n},
\end{equation}
where $Q_{X^{n+1}}$ is any probability assignment and 
$\widetilde Q_{X_{n+1}|X^n}$ is the predictor defined in \prettyref{eq:Q1}.
As we show next, the memory term turns out to be negligible compared with the redundancy. 


\subsection{Redundancy bound for HMM}
Next we upper bound the redundancy $\red(\PHMM_n(k,\ell))$ and prove the upper bound in \Cref{thm: stats bounds}. To this end, it suffices to bound the redundancy of the joint state-emission sequence. Indeed, by definition \prettyref{eq:red}, for any $Q_{X^{n+1},Z^{n+1}}$ and any $P_{X^{n+1},Z^{n+1}}$ in the model class, we have 
$\kl(P_{X^{n+1},Z^{n+1}}\|Q_{X^{n+1},Z^{n+1}}) 
\leq \kl(P_{X^{n+1}}\|Q_{X^{n+1}})$. 
Let us define a joint probability assignment
by separately approximating the 
transition and emission probabilities using the probability assignment designed for the Markov and i.i.d.~class respectively:
\begin{align}\label{eq:Q_joint_distribution}
Q_{X^{n+1}, Z^{n+1}}(x^{n+1}, z^{n+1}) &= Q_{Z^{n+1}}(z^{n+1})\cdot Q_{X^{n+1}|Z^{n+1}}(x^{n+1}|z^{n+1}) \\
&= \frac{1}{k}\prod_{t=1}^n M_t(z_{t+1}|z_t)\cdot \prod_{t=1}^{n+1} T_{t}(x_{t}|z_{t}), \nonumber
\end{align}
where $M_t$ and $T_t$ are the add-one estimators (\cite{Krichevsky81performance})
for the transition and emission probabilities, respectively:
\begin{align}
M_t( z' | z) &= \frac{1+\sum_{i=1}^{t-1} \one_{z_{i+1}=z'\text{ and }z_{i}=z}}{k+\sum_{i=1}^{t-1}\one_{z_i=z}}, \label{eq: add-one M}\\
T_{t}(x|z)&=\frac{1+\sum_{i=1}^{t-1}\one_{z_i=z\text{ and }x_i=x}}{\l+\sum_{i=1}^{t-1}\one_{z_i=z}}. \label{eq: add-one T}
\end{align}
Finally, let $Q_{X^{n+1}}$ be the marginal of \prettyref{eq:Q_joint_distribution}.
The following result bounds on the pointwise redundancy of $Q_{X^{n+1}, Z^{n+1}}$ and thus that of $Q_{X^{n+1}}$. By \prettyref{eq:red-pointwise}, this also bounds their average-case redundancy.


\begin{proposition}\label{prop: red upper bound}
Let $n\ge k(k+\ell)$. For any hidden transition matrix $M$ (with stationary distribution $\pi$) on state space $[k]$ and emission matrix $T$ from $[k]$ to $[\ell]$, 
$$
\max_{x^{n+1}, z^{n+1}}\log \frac{\pi\left(z_1\right) \prod_{t=1}^{n} M\left(z_{t+1} | z_t\right) \prod_{t=1}^{n+1} T\left(x_t | z_t\right)}{ Q_{X^{n+1}, Z^{n+1}}(x^{n+1}, z^{n+1}) } \lesssim k^2 \log \frac{n}{k^2}+k \ell \log \frac{n}{k \ell}.
$$
Consequently, $\Red(Q_{X^{n+1}}; \PHMM_n(k,\ell))\lesssim k^2\log(n/k^2) + k\ell\log(n/k\ell)$. 
\end{proposition}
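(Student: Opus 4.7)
The plan is to decompose the pointwise log-ratio into three additive pieces---initial state, Markov transitions, and emissions---and bound each by the standard pointwise redundancy bound for the Laplace (add-one) probability assignment applied to suitable sub-sequences.

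Writing out the log-ratio and using that the marginal over $Z_1$ in the assignment is uniform on $[k]$,
\begin{align*}
    & \log \frac{\pi(z_1) \prod_{t=1}^{n} M(z_{t+1}|z_t) \prod_{t=1}^{n+1} T(x_t|z_t)}{Q_{X^{n+1},Z^{n+1}}(x^{n+1},z^{n+1})} \\
    &\quad = \log(k\pi(z_1)) + \sum_{t=1}^n \log \frac{M(z_{t+1}|z_t)}{M_t(z_{t+1}|z_t)} + \sum_{t=1}^{n+1}\log \frac{T(x_t|z_t)}{T_t(x_t|z_t)}.
\end{align*}
The first piece is at most $\log k$. For each latent state $z\in[k]$ set $N_z = \sum_{t=1}^n \one_{z_t=z}$ and $\tilde N_z = \sum_{t=1}^{n+1}\one_{z_t=z}$. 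Grouping the transition sum by the value of $z_t$, its $z$-th block equals the log-ratio between $\prod_{z'} M(z'|z)^{N_{z,z'}}$ (where $N_{z,z'}$ is the number of $z\to z'$ transitions) and the Laplace assignment on alphabet $[k]$ applied to the length-$N_z$ successor sub-sequence $(z_{t+1})_{t:z_t=z}$; by \eqref{eq: add-one M} this is exactly $\prod_{t:z_t=z} M_t(z_{t+1}|z)$. An analogous grouping for the emission sum yields, for each $z$, a log-ratio against the Laplace assignment on alphabet $[\ell]$ applied to the length-$\tilde N_z$ emission sub-sequence.

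The key tool is the well-known pointwise-redundancy bound for the Laplace assignment: for any individual sequence $y^N$ over $[K]$,
$$\log \frac{\max_p \prod_{i=1}^N p(y_i)}{Q_L(y^N)} \le (K-1)\log\frac{N+K}{K} + O(K),$$
which follows from the closed form $Q_L(y^N) = \prod_y n_y!\,(K-1)!/(N+K-1)!$ combined with Stirling. Since the true $M(\cdot|z)$ and $T(\cdot|z)$ are valid probability vectors, relaxing to the max over all such vectors only inflates the bound. Applying this with $(K,N)=(k,N_z)$ and $(K,N)=(\ell,\tilde N_z)$, summing over $z$, and invoking concavity of $\log$ (Jensen) with $\sum_z N_z=n$ and $\sum_z \tilde N_z=n+1$, I would obtain
$$(k-1)\sum_z \log\frac{N_z+k}{k} \le k(k-1)\log\Big(\tfrac{n}{k^2}+1\Big), \qquad (\ell-1)\sum_z\log\frac{\tilde N_z+\ell}{\ell} \le k(\ell-1)\log\Big(\tfrac{n+1}{k\ell}+1\Big).$$
The hypothesis $n\ge k(k+\ell)$ guarantees $n\ge k^2$ and $n\ge k\ell$, so the $O(k^2)$ and $O(k\ell)$ Stirling corrections and the ``$+1$'' inside the logs are absorbed into the leading terms, producing the claimed bound $\lesssim k^2\log(n/k^2)+k\ell\log(n/(k\ell))$.

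Finally, the ``Consequently'' statement for the marginal $Q_{X^{n+1}}$ follows from the data-processing inequality $\kl(P_{X^{n+1}}\|Q_{X^{n+1}})\le\kl(P_{X^{n+1},Z^{n+1}}\|Q_{X^{n+1},Z^{n+1}})$ applied with $P_{Z^{n+1}}$ the HMM's own latent-state distribution, together with the general fact that pointwise redundancy dominates average-case redundancy. The main obstacle is technical rather than conceptual: one must check that Jensen is being applied in the right direction (equal visit counts maximize the right-hand side, which is what we want as an upper bound) and that the hypothesis $n\ge k(k+\ell)$ really is strong enough to swallow the additive Stirling error and the ``$+1$'' terms inside the logarithms, since otherwise the bound would degrade when $n$ is very close to $k^2$ or $k\ell$.
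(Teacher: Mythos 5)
Your proposal is correct and follows essentially the same route as the paper: decompose the pointwise log-ratio into transition and emission pieces, group each by the conditioning state so that the add-one assignment factorizes into per-context Laplace assignments, apply the standard Laplace pointwise-redundancy bound, and finish with Jensen on $\sum_z N_z = n$ and $\sum_z \tilde N_z = n+1$. The only cosmetic difference is that the paper cites the Markov redundancy bound (Lemma~\ref{lem:markov_redundancy}, from \cite{han2021optimal}) as a black box for the transition part, whereas you reprove it inline with the same grouping argument used for the emissions; you also correctly flag that the hypothesis on $n$ must be large enough (a constant factor more than $k^2$ and $k\ell$) so that the leading logarithmic terms dominate the $O(k^2)+O(k\ell)$ additive corrections, which is exactly why Theorem~\ref{thm: stats bounds} states the condition as $n\ge Ck(k+\ell)$.
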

Combining Propositions \ref{prop:riskred}, \ref{prop: decay info}, and \ref{prop: red upper bound}, we have
\begin{align*}
\Risk(\PHMM_n(k,\ell)) \lesssim \frac{k^2}{n} \log \frac{n}{k^2}+\frac{k \ell}{n} \log \frac{n}{k \ell} + \frac{\log k}{n}, 
\end{align*}
which completes the upper bound proof of \Cref{thm: stats bounds}. In fact, the same program can be extended to HMMs with general emissions. To this end, let $X$ take value in a general space $\calX$, and $\calQ$ be a class of probability distributions over $\calX$. We use $\PHMM_n(k,\calQ)$ to denote the collection of stationary HMMs of length $n+1$, with hidden states in $[k]$ and emissions in $\calQ$ (i.e. $P_{X|Z}(\cdot|z)\in \calQ$ for all $z\in [k]$). The following corollary, proved in Appendix \ref{sec:any emission}, bounds the prediction risk:
\begin{corollary}\label{cor: any emission}
Suppose $\Red(\calQ^{\otimes t})\le R(t)$ for all $t$ for some concave $R(\cdot)$. 
Then for $n\gtrsim k^2$,
\begin{align*}
\Risk(\PHMM_n(k,\calQ)) \lesssim \frac{k^2}{n}\log\frac{n}{k^2} + \frac{k}{n}R\left(\frac{n+1}{k}\right). 
\end{align*}
\end{corollary}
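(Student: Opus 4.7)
The plan is to mirror the proof of the upper bound in \Cref{thm: stats bounds}, replacing the discrete add-one emission estimator by a probability assignment tailored to the general emission class $\calQ$. I would invoke \Cref{prop:riskred} together with \Cref{prop: decay info} (the latter giving $\mem \le (\log k)/n$ uniformly in the emission space) and construct a joint probability assignment on $(X^{n+1}, Z^{n+1})$ whose Markov-chain factor is handled by the add-one estimator \eqref{eq: add-one M} and whose conditional-emission factor exploits the hypothesized $\calQ^{\otimes t}$ redundancy.

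Concretely, for each $t$ let $Q^{(t)}$ be a probability assignment on $\calX^t$ with $\Red(Q^{(t)}; \calQ^{\otimes t}) \le R(t)$, and set
\[
Q_{X^{n+1}, Z^{n+1}}(x^{n+1}, z^{n+1}) \triangleq \frac{1}{k}\prod_{t=1}^n M_t(z_{t+1}\mid z_t) \cdot \prod_{z=1}^k Q^{(N_z)}(x_{T_z}),
\]
where $T_z = \{t \in [n+1] : z_t = z\}$, $N_z = |T_z|$, and $x_{T_z}$ is the subsequence of $x^{n+1}$ indexed by $T_z$. The key observation is that under any $P \in \PHMM_n(k, \calQ)$, conditional on $z^{n+1}$ the subsequences $\{x_{T_z}\}_{z \in [k]}$ are mutually independent, with the $z$-th being $N_z$ iid draws from some $\nu_z \in \calQ$. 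Hence the conditional KL factorizes as $\sum_z \kl(\nu_z^{\otimes N_z} \| Q^{(N_z)}) \le \sum_z R(N_z)$, and since $\sum_z N_z = n+1$ pointwise, concavity of $R$ gives the deterministic bound $\sum_z R(N_z) \le k R((n+1)/k)$. The transition factor contributes $\lesssim k^2 \log(n/k^2)$, corresponding to the $\ell$-free part of \Cref{prop: red upper bound}, i.e.\ the classical add-one redundancy for $k$-state Markov chains. By the chain rule for KL and the monotonicity $\Red(Q_{X^{n+1}}; \cdot) \le \Red(Q_{X^{n+1}, Z^{n+1}}; \cdot)$, the total redundancy is $\lesssim k^2 \log(n/k^2) + k R((n+1)/k)$.

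Feeding this into \eqref{eq: riskred strong} and absorbing $(\log k)/n$ into $(k^2/n)\log(n/k^2)$ (valid for $n \gtrsim k^2$) produces the claimed rate. The only step I would verify most carefully is the Jensen application when some states are unvisited ($N_z = 0$): adopting $R(0) \triangleq 0$ and the empty-product convention for $Q^{(0)}$ handles this cleanly, since concavity together with $R(0) \ge 0$ implies $k' R((n+1)/k') \le k R((n+1)/k)$ for every $k' \le k$, so restricting the sum to visited states only improves the bound. Nothing in the argument uses discreteness of $\calX$, so the core of the proof is simply plugging the hypothesized emission redundancy $R(\cdot)$ into what is otherwise mechanical chain-rule bookkeeping on top of the Markov-part bound already established in \Cref{prop: red upper bound}.
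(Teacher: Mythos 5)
Your proposal is correct and matches the paper's proof essentially line by line: both decompose $Q_{X^{n+1},Z^{n+1}}$ into the add-one Markov factor (controlled by Lemma~\ref{lem:markov_redundancy}) and a product over states of emission probability assignments achieving $R(\cdot)$ on $\calQ^{\otimes N_z}$, then apply conditional independence of the subsequences given $z^{n+1}$, Jensen on the concave $R$, data processing $\Red(Q_{X^{n+1}})\le\Red(Q_{X^{n+1},Z^{n+1}})$, and finally \eqref{eq: riskred strong}. Your careful handling of the $N_z=0$ case via $R(0)=0$ and the observation that $k'\mapsto k'R((n+1)/k')$ is nondecreasing is a small but genuine tightening of a step the paper leaves implicit.
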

\begin{remark}\label{rmk: gaussian risk}
    When $\calQ$ is the set of Gaussian distributions $\N(w, I_d)$ with $w\in [-1, 1]^d$, one has $\Red(\calQ^{\otimes n})\lesssim d\log n$ by Gaussian channel capacity. Hence, Corollary \ref{cor: any emission} shows that the optimal prediction risk for Gaussian HMM with $k$ hidden states is $O(\frac{k(k+d)}{n}\log n)$. Furthermore, as we will see in the next section (Corollary~\ref{cor:general_emission_lb}), this bound is tight.
\end{remark}

\subsection{An optimal prediction algorithm}
We show that the estimator in \Cref{thm: stats bounds} can be computed in time $n^{O(k^2+k\l)}$, and it suffices to prove that the marginal distribution $Q_{X^{n+1}}$ can be efficiently computed based on the joint distribution $Q_{X^{n+1},Z^{n+1}}$ in \eqref{eq:Q_joint_distribution}. Our idea relies on an equivalent expression of $Q_{X^{n+1},Z^{n+1}}$ via sufficient statistics: let $M\in \mathbb{R}^{k\times k}, T\in \mathbb{R}^{k\times \ell}$ be the transition and emission count matrices, formally defined as $M_{z,z'}=\sum_{t=1}^n \one_{z_t=z,z_{t+1}=z'}$ and $T_{z,x} = \sum_{t=1}^{n+1} \one_{z_t = z, x_t = x}$, then
\begin{align*}
Q_{X^{n+1},Z^{n+1}}(x^{n+1},z^{n+1}) = \frac{1}{k}\prod_{z\in [k]}\left( \frac{\prod_{z'\in [k]} M_{z,z'}!}{k^{\overline{M_z}}} \cdot \frac{\prod_{x\in [\ell]} T_{z,x}!}{\ell^{\overline{T_z}}} \right)\triangleq F(M, T), 
\end{align*}
where $M_z, T_z$ denotes the row sums of $M, T$, and $k^{\overline{m}} = k(k+1)\cdots(k+m-1)$ is the rising factorial. Based on the above expression, to compute the marginal distribution $Q_{X^{n+1}}$ it suffices to enumerate over all possible matrices $(M, T)$ and compute the number $\calA(M,T; x^{n+1})$ of sequences $z^{n+1}$ that induce a given $(M,T)$.  The following lemma, proved in \prettyref{appdx: small k l}, shows that for each $(M,T)$ 
this enumeration can be done in $n^{O(k^2+k\ell)}$ time by dynamic programming. 

\begin{lemma}\label{lem: const k l alg main}
Given any sequence $x^{n+1}$, the count $\calA(M,T; x^{n+1})$ can be computed in time $n^{O(k^2+k\ell)}$. 
\end{lemma}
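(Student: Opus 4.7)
The plan is to compute $\calA(M,T;x^{n+1})$ by a forward dynamic program that, at each time $t$, tracks just enough information about the prefix $z^t$ to decide whether it can be extended to a full sequence realizing the target count matrices $(M,T)$. The key observation is that the only relevant statistics of $z^t$ going forward are (i) the current state $z_t$, (ii) the partial transition-count matrix $A \in \mathbb{Z}_{\ge 0}^{k\times k}$ with $A_{z,z'} = \sum_{s=1}^{t-1}\one_{z_s=z,\, z_{s+1}=z'}$, and (iii) the partial emission-count matrix $B \in \mathbb{Z}_{\ge 0}^{k\times \ell}$ with $B_{z,x} = \sum_{s=1}^{t}\one_{z_s=z,\, x_s=x}$, since a future state $z_{t+1}$ is permissible only if the induced increments keep the running counts componentwise at most $M$ and $T$ respectively.

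Concretely, let $N_t(z,A,B)$ denote the number of prefixes $z^t \in [k]^t$ with $z_t=z$ inducing partial counts $(A,B)$. For the base case, $N_1(z,\mathbf{0}, E_{z,x_1})=1$ for every $z\in[k]$, where $E_{i,j}$ is the matrix with a $1$ in position $(i,j)$ and $0$ elsewhere, and $N_1(\cdot)=0$ otherwise. The recursion, for $t=1,\dots,n$, reads
\begin{equation*}
N_{t+1}(z',A',B') \;=\; \sum_{z\in[k]}\, N_{t}\!\left(z,\, A' - E_{z,z'},\, B' - E_{z',x_{t+1}}\right),
\end{equation*}
where terms with negative entries are treated as zero, and we restrict $(A',B')$ to $A'\le M$, $B'\le T$ entrywise. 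Finally $\calA(M,T;x^{n+1}) = \sum_{z\in[k]} N_{n+1}(z,M,T)$. Correctness follows from the bijective correspondence between $z^{n+1}$ realizing $(M,T)$ and DP paths reaching a state $(z,M,T)$ at time $n+1$.

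For the running time, any reachable entry of $A$ or $B$ lies in $\{0,1,\dots,n+1\}$, and since we prune states with $A' \not\le M$ or $B' \not\le T$, the number of candidate pairs $(A,B)$ is at most $\prod_{z,z'}(M_{z,z'}+1)\prod_{z,x}(T_{z,x}+1) \le (n+2)^{k^2+k\ell}$. Together with the $k$ choices of current state, the DP table has size $k(n+2)^{k^2+k\ell}$, and each cell is filled in $O(k)$ time, giving a total runtime of $n^{O(k^2+k\ell)}$ as claimed. The only mildly delicate point—which I do not regard as a real obstacle—is verifying that the pruning (i.e., enumerating only $A'\le M$ and $B'\le T$) does not miss any valid sequence; this is immediate since partial counts are monotone nondecreasing in $t$ and the final counts are exactly $(M,T)$.
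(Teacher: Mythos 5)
Your forward dynamic program over $(z_t, A, B)$ is correct and is essentially the same memoized-recursion-over-count-matrices approach as the paper, which instead runs the recursion backward from $(M,T)$ and deduces the current state $z_K$ from the single row where the row sums of the partial $M$ and $T$ disagree (so it need not carry $z$ in the DP key). Both yield the same $n^{O(k^2+k\ell)}$ bound; your version tracks slightly more state but the factor $k$ is absorbed into the exponent.
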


Since the entries of $(M,T)$ take values in $\{0,1,\cdots,n+1\}$, the number of all possible matrices is $n^{O(k^2+k\ell)}$.
This completes the proof of the computational upper bound in Theorem \ref{thm: stats bounds}. 

\subsection{Renewal processes}
\label{sec:ub-renewal}
For the class $\Prenew_n$ of renewal processes defined in \prettyref{sec:main}, we invoke a well-known result on its redundancy: 
\begin{lemma}[\cite{csiszar96redundancy}]\label{lem: renewal red}
 $\Red(\Prenew_{n})=\Theta(\sqrt{n})$.
 \end{lemma}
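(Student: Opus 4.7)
The key structural observation is that a binary path $X^n$ from a stationary renewal process is equivalent, under an explicit bijection, to the sequence of interarrival gaps inside $[1,n]$ together with two boundary terms (the stationary initial wait $T_0$ and the unfinished tail after the last observed renewal). Consequently the likelihood under an interarrival law $\mu$ factorizes, up to these two boundary factors, as $\prod_{i=1}^{k-1}\mu(g_i)$, where $g_1,\dots,g_{k-1}$ are the completed gaps and $\sum g_i \le n$. This reduces both the upper and lower bounds to compressing a random-length i.i.d.\ sample from an unknown law on $\mathbb{N}$, with the sample size $k$ itself an informative statistic satisfying $k\cdot\E_\mu[T_1]\approx n$.

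\textbf{Upper bound.} I would use Shtarkov's normalized maximum likelihood assignment and control the Shtarkov sum in \eqref{eq:shtarkov} directly. For a fixed composition $(g_1,\dots,g_k)$ of an integer at most $n$, the supremum of $\prod_i \mu(g_i)$ over $\mu$ is attained at the empirical gap distribution and equals $\prod_g (n_g/k)^{n_g}$ with $n_g$ the multiplicity of gap value $g$. Summing over compositions is a weighted partition count: the total number of compositions of $n$ grows like $2^{n-1}$, but the multinomial weight $\prod_g(n_g/k)^{n_g}$ is exponentially small on atypical sequences and, after Stirling, converts the sum into one governed by the integer partition function $p(n)=e^{\Theta(\sqrt{n})}$. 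Either a direct estimate or generating-function/singularity analysis of the associated renewal generating function $(1-\widehat\mu(z))^{-1}$, in the spirit of \cite{flajolet02analytic}, yields $\log\sum_{x^n}\sup_\mu P_\mu(x^n)=O(\sqrt{n})$. The two stationary boundary factors contribute only $O(\log n)$ and are absorbed into the $O(\sqrt{n})$ slack.

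\textbf{Lower bound.} I would use the Bayesian/mutual-information bound $\Red(\Prenew_n)\ge I(\mu;X^n)$ valid for any prior on $\mu$. Take $\mu$ to be $\text{Dirichlet}(\alpha,\dots,\alpha)$ supported on $\{1,\dots,L\}$ with $L=\lfloor c\sqrt{n}\rfloor$; then $\E_\mu[T_1]=\Theta(L)$ and with high probability the number $K$ of observed renewals is $\Theta(n/L)=\Theta(\sqrt{n})$. Conditional on $K=m$, the gaps are i.i.d.\ from $\mu$, so $I(\mu;g_1,\dots,g_m)\gtrsim \frac{L-1}{2}\log\frac{m}{L}=\Theta(L)=\Theta(\sqrt{n})$ by the Rissanen/Clarke--Barron rate for Dirichlet priors, once the ratio $m/L$ is a bounded constant. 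Concentration of the renewal counting process around $n/\E[T_1]$ transfers this to an unconditional bound, and a small truncation removes the event $K\ll n/L$.

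\textbf{Main obstacle.} The principal technical hurdle is the interaction between the random sample size $K$ and the boundary terms: unlike fixed-length i.i.d.\ compression, here both the alphabet $\{1,\dots,n\}$ and the number of samples are simultaneously controlled by the same unknown $\mu$, and the stationary initial wait $T_0$ couples the whole path to $\mu$ in a non-product way. Both bounds must be tuned so that the ``effective'' alphabet size $L$ and sample size $n/L$ meet exactly at $\sqrt{n}$; picking $L$ too large or too small loses either the sample count in the lower bound or the multinomial savings in the Shtarkov sum. A secondary subtlety is shaving the naive $\sqrt{n}\log n$ factor down to $\sqrt{n}$, which requires the sharp Stirling cancellation rather than a crude union bound over compositions.
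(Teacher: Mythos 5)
Your proposal is correct and, for the upper bound, is exactly the approach the paper describes after the lemma: the paper does not prove this result itself (it cites \cite{csiszar96redundancy}) but explicitly notes that the $\sqrt{n}$ upper bound ``is obtained by analyzing the pointwise redundancy and bounding the Shtarkov sum by the partition number whose asymptotics yields the $\sqrt{n}$ term.'' Your conversion of the Shtarkov sum into a partition count via Stirling cancellation of the multinomial weight against the maximum-likelihood term is the standard execution of that idea.

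For the lower bound, your mutual-information argument with a prior supported on $\{1,\dots,L\}$, $L\asymp\sqrt n$, is in the same spirit as the prior the paper uses for its \emph{prediction-risk} lower bound in Appendix~D (there, uniform over $\sqrt n$-element subsets of $[2\sqrt n]$ rather than Dirichlet); the balance point $m/L=\Theta(1)$ is indeed what forces $L\asymp\sqrt n$. Two small cautions: (i) your phrase ``conditional on $K=m$, the gaps are i.i.d.\ from $\mu$'' is not literally true --- conditioning on the renewal count in $[1,n]$ ties the gaps together through the constraint on their sum --- though this is a benign nuisance handled by concentration; and (ii) in the regime $m/L=\Theta(1)$ the Clarke--Barron formula $\frac{L-1}{2}\log\frac{m}{L}$ is being used outside its asymptotic regime, so to make $I(\mu;\text{gaps})=\Omega(L)$ rigorous you would want a direct finite-$m$ argument (e.g.\ a point-mass mixture or a one-step Le Cam/Fano computation) rather than quoting the asymptotic. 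Neither issue changes the order of magnitude, and the overall structure is sound.
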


By Proposition \ref{prop:riskred} and Lemma \ref{lem: renewal red}, it remains to upper bound the memory term $\mem(\Prenew_n)$. 
A stationary renewal process $\{X_t\}$ with interarrival distribution $\mu$ is represented by a stationary HMM with a countably infinite state space as follows: 
\begin{enumerate}
    \item The hidden states $\{Z_t\}$ takes values in $\naturals$ represents the ``countdown'' until the next renewal, where $\P(Z_{t+1}=i-1|Z_t=i)=1$ if $i\geq 2$ and $\P(Z_{t+1}=j|Z_t=1)=\mu(j)$ for  $j\geq 1$.
    
    \item The emissions is binary and deterministic: $X_t=\one_{Z_t=1}$. 
\end{enumerate}
Furthermore, the stationary state distribution $\pi_\mu$
is precisely the law of the initial wait time in \prettyref{sec:main}, given by $\pi_\mu(t) = \frac{\sum_{i\geq t} \mu(i)}{\sum_{i\geq 1} i \mu(i)}$. 
This HMM representation allows us to apply  Proposition \ref{prop: decay info} to bound the memory term
\begin{align*}
 \mem(\Prenew_n) \le \frac{1}{n}I(Z_1; X^{n+1}). 
\end{align*}
Although $Z_1$ takes infinitely many values, we show that the above mutual information is still at most $O(\log n)$: Let $\widetilde Z_1\triangleq\min\{Z_1, n+2\}$. Then $Z_1\to \widetilde Z_1\to X^{n+1}$ is a Markov chain because $\widetilde{Z}_1 = Z_1$ if $\widetilde{Z}_1<n+2$, and $X^{n+1}=0^{n+1}$ if $\widetilde{Z}_1= n+2$.
Therefore, by the data processing inequality: 
$$I(Z_1; X^{n+1})\leq I(\widetilde Z_1; X^{n+1})\leq H(\widetilde Z_1)\leq \log (n+2).$$ 
The upper bound of \Cref{thm: risk renewal} then follows from Proposition~\ref{prop:riskred} and Lemma~\ref{lem: renewal red}. 

Note that the redundancy upper bound in \prettyref{lem: renewal red} is obtained by analyzing the pointwise redundancy \prettyref{eq:red-pointwise} and bounding the Shtarkov sum \prettyref{eq:shtarkov} by  the partition number whose asymptotics yields the $\sqrt{n}$ term 
(\cite{hardy1918asymptotic}).
Thanks to \prettyref{prop:riskred}, 
averaging of the conditionals of the Shtarkov distribution (normalized maximum likelihood) yields a predictor that attains the optimal rate $\frac{1}{\sqrt{n}}$.
As discussed in \prettyref{sec:main}, finding a computationally efficient optimal predictor is an interesting open question. 


\section{Proof of the lower bounds}
\label{sec:lb}
This section proves the lower bounds of the prediction risk for HMMs with further technical results deferred till \Cref{appdx: lower HMM}. We first present a generic embedding idea to lower bound the prediction risk using the redundancy of a slightly smaller class of HMMs; this reduction essentially shows the tightness of the compression-prediction program in Section \ref{sec:riskred} when a (hidden) Markov structure is available. Next we lower bound the redundancy $\Red(\PHMM_n(k,\ell))$. 
For renewal processes we use an explicit prior and lower bound the Bayes prediction risk directly (see \Cref{appdx: renewal}). 

\subsection{Reduction from redundancy to prediction risk}\label{sec:reduction}

Complementing the upper bound in Proposition \ref{prop:riskred}, 
the following result lower bounds the prediction risk by the redundancy of HMMs with one fewer states and observations.
\begin{proposition}[Lower bound prediction risk by redundancy]\label{prop:risk_lower_bound}
    For $\PHMM_n(k,\ell)$ with $k\ge 2, \l\geq 3$, 
    $$\risk_{\operatorname{HMM}}(n, k, \l)\gtrsim\frac{1}{n}( \red(\PHMM_{n+1}(k-1,\ell-1))-\log\l).$$
\end{proposition}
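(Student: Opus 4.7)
The plan is to establish the converse of \prettyref{prop:riskred} for HMMs, converting universal compression on the sub-class into prediction on the full class via a ``reset state'' embedding. The key idea is that any good predictor on $\PHMM_n(k,\ell)$ will yield a probability assignment on $\PHMM_{n+1}(k-1,\ell-1)$ via padding and restriction, with the $n$-fold blow-up from prediction risk to redundancy compensated by averaging over all possible ``time since last reset'' events.

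For the embedding, given any sub-HMM $P_0 \in \PHMM_{n+1}(k-1,\ell-1)$ with transition $M_0$, stationary distribution $\pi_0$, and emissions $T_0$ on $[\ell-1]$, I would introduce a special reset state $k$ that emits symbol $\ell$ deterministically. The enlarged transition $M$ is defined by $M(s,k) = p$ and $M(s,s') = (1-p)M_0(s,s')$ for $s,s' \in [k-1]$, together with $M(k,k) = p$ and $M(k,s) = (1-p)\pi_0(s)$ for $s \in [k-1]$, with $p = \Theta(1/n)$. A direct check shows the stationary distribution is $\mu(k) = p$, $\mu(s) = (1-p)\pi_0(s)$, so the resulting process $\tilde P$ is a bona fide member of $\PHMM_n(k,\ell)$. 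Structurally, occurrences of symbol $\ell$ coincide with visits to the reset state; conditional on reset positions, the sub-paths between consecutive $\ell$'s are independent $P_0$-samples initialized at $\pi_0$.

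Next I would fix a predictor $Q_{X_{n+1}|X^n}$ with prediction risk $r$ on $\PHMM_n(k,\ell)$. Under $\tilde P$, let $\tau$ denote the number of steps since the most recent reset in $X^n$. Conditional on $\tau = t$, the tail $(X_{n-t+1},\ldots,X_n)$ is distributed as $(Y_1,\ldots,Y_t) \sim P_0$, and $\tilde P(X_{n+1}=y|X^n) = p\,\Indc\{y=\ell\} + (1-p)\,P_0(Y_{t+1}=y|Y^t)\,\Indc\{y\in[\ell-1]\}$. Defining the induced sub-HMM predictor $\widehat Q(y|Y^t) \triangleq Q_{X_{n+1}|X^n}(y|\ell^{n-t},Y^t)/Z(Y^t)$ with $Z(Y^t) \triangleq \sum_{y\in[\ell-1]} Q_{X_{n+1}|X^n}(y|\ell^{n-t},Y^t)$, a one-line chain-rule computation gives the decomposition
\[
\kl\bigl(\tilde P(X_{n+1}|X^n)\,\|\,Q(X_{n+1}|X^n)\bigr) = \kl\bigl(\Bern(p)\,\|\,\Bern(Q(\ell|\cdot))\bigr) + (1-p)\,\kl\bigl(P_0(\cdot|Y^t)\,\|\,\widehat Q(\cdot|Y^t)\bigr).
\]
Thus each conditional sub-HMM loss is bounded by $(1-p)^{-1}$ times the big-class loss. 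Since $\tau$ is essentially geometric with parameter $p = \Theta(1/n)$, one has $\P(\tau=t) \gtrsim 1/n$ uniformly for $t \in \{0,1,\ldots,n+1\}$; combining with the KL chain rule applied to the assembled joint $\widehat Q_{Y^{n+2}} = \prod_t \widehat Q(\cdot|Y^t)$ yields $r \gtrsim \frac{1}{n}\kl(P_0\|\widehat Q)$. Taking the supremum over $P_0$ and infimum over $Q$ then delivers $\risk_{\operatorname{HMM}}(n,k,\ell) \gtrsim \frac{1}{n}(\red(\PHMM_{n+1}(k-1,\ell-1)) - \log\ell)$, where the additive $\log\ell$ absorbs residual normalization losses.

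The main obstacle will be controlling the aggregate normalization cost $\sum_t \E[\log(1/Z(Y^t))]$ so that it stays $O(\log\ell)$ rather than blowing up to $O(n\log\ell)$. Heuristically, a near-optimal $Q$ should assign probability $\approx p = \Theta(1/n)$ to symbol $\ell$ at each step, giving $\log(1/Z) = O(1/n)$ per step and summing to $O(1)$; making this rigorous for arbitrary $Q$ requires either a minimax reduction allowing us to assume $Q$ is near-optimal, or an explicit perturbation of $Q$ that suppresses excess mass on $\ell$. A secondary bookkeeping issue is the off-by-one between length-$(n+1)$ big-HMM sequences and length-$(n+2)$ sub-HMM sequences, which is absorbed into the same $\log\ell$ correction.
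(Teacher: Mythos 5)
Your embedding idea---a designated ``reset'' state $k$ that emits $\ell$ deterministically, together with uniform spread over effective sample sizes---is indeed the right construction and matches the paper's high-level strategy. However, the specific transition kernel you chose is scaled wrongly, and this creates a genuine gap. You set $M(k,k)=p=\Theta(1/n)$, which makes the chain \emph{leave} $k$ almost surely every step and yields $\mu(k)=p=\Theta(1/n)$; consequently the event $\{X^{n-t}=\ell^{n-t}\}$ on which your induced predictor $\widehat Q(\cdot\mid Y^t)=Q(\cdot\mid \ell^{n-t},Y^t)/Z$ is defined has probability $\approx p^{\,n-t}$, exponentially small in $n-t$. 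The big-class risk puts negligible mass on those prefixes, so it cannot control the sub-class risk of $\widehat Q$. The paper instead makes state $k$ \emph{lazy}, with self-loop $1-1/n$ and exit rate $1/n$; this gives stationary mass $\pi_P(k)=1/2$ and makes each event $E_t=\{x^t=\ell^t,\;x_{t+1}^n\in[\ell-1]^{n-t}\}$ have probability $\Theta(1/n)$, which is what you need.

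There is a second, more structural issue hidden in your proposal. Even if you replaced the specific prefix $\ell^{n-t}$ by the generic event $\{\tau=t\}$, your chain revisits $[k-1]$ many times before the last reset, so the pre-reset history contains several independent sub-HMM runs that \emph{do} inform the model parameters. In the Bayesian reading, this makes $I(\theta;X_{n+1}\mid X^n,\tau=t)$ \emph{smaller} than the target $I(\theta;Y_{t+1}\mid Y^t)$, i.e.~the wrong direction for a lower bound. The paper's lazy state forces the entire pre-reset prefix to be deterministically $\ell^t$, which carries no information about $\theta$, so equality holds (up to the harmless $1-1/n$ scaling).

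Finally, the normalization-cost obstacle you flag at the end is real but is a symptom of working with a fixed predictor $Q$ and constructing $\widehat Q$ by hand. The paper sidesteps it entirely by arguing in the Bayesian picture: pick a prior on the sub-HMM, write the Bayes prediction risk of the big class as a conditional mutual information, condition on the $\sigma(X^n)$-measurable events $E_t$, and identify $I(P;X_{n+1}\mid X^n,E_t)$ with $(1-1/n)\,I(Q;X_{n+1}\mid X_{t+1}^n)$. Summing over $t$ with the chain rule then produces $\frac{1}{n}\left(I(Q;X^{n+1})-I(Q;X_1)\right)$, and taking the supremum over priors gives the redundancy. That route removes the need for any explicit construction of $\widehat Q$ or its normalizing constant $Z(Y^t)$, and it is what makes the $\log\ell$ correction come out as a single $I(Q;X_1)\le\log\ell$ term rather than a sum over $t$.
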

For $k\ge 2,\ell \ge 3$, combining this result with 
the redundancy lower bound in Theorem \ref{thm: redd lower discr} proves the lower bound in Theorem \ref{thm: stats bounds}. (Note that for $k=2$, $\PHMM_{n+1}(k-1,\ell-1)$ is in fact an i.i.d.~process over $[\ell]$ and has redundancy $\Theta(\ell\log(n/\ell))$ for $n\gtrsim \ell$ \cite{Davisson73universal}). 

The proof of Proposition \ref{prop:risk_lower_bound} relies on a reduction from redundancy to prediction risk. Given an arbitrary instance $Q$ of the HMM parameters with hidden alphabet $[k-1]$ and emission $[\l-1]$, we seek to construct another instance $P$ for the HMM parameters with hidden alphabet $[k]$ and emission $[\ell]$. The main idea is to add a ``lazy'' state $k$ that slows down the chain. This uninformative state has a heavy self loop such that with constant probability, the chain only explores the original state space $[k-1]$ for a period of time that is approximately uniform in $[n]$, effectively reducing the sample size from $n$ to $\operatorname{Unif}([n])$. As such, the prediction risk can then be related to the cumulative risk, that is, the redundancy. Specifically, define
\begin{enumerate}
    \item Emission probabilities: $P(X=\ell | Z = k) = 1$ and $P_{X|Z=z} = Q_{X|Z=z}$ for all $z\in [k-1]$. In other words, state $k$ always emits $\ell$, while the emissions of other states are the same as $Q$. 
    \item Transition: let $\pi_Q$ be the stationary distribution over the state space $[k-1]$ under $Q$: 
    \begin{align}
    P(Z_2 = j | Z_1 = i) = \begin{cases}
       \one_{j=k}(1 - 1/n)+\one_{j\neq k}\pi_Q(j)/n &\text{if } i = k,\\
       1/n & \text{if } i \neq k, j = k, \\
       (n-1)Q(Z_2 = j | Z_1 = i) / n & \text{if } i\neq k, j\neq k.
    \end{cases}
    \label{eq:reduction_transition}
    \end{align}
\end{enumerate}

One can verify that the stationary state distribution of the HMM $P$ is 
$\pi_P(k)=1/2$ and $\pi_P(i)=\pi_Q(i)/2$. For $0\le t\le n-1$, define the event $E_t=\{x^n: x^t=\l^t, x_{t+1}^n\in [\l-1]^{n-t}\}$. A simple computation shows that $\P(E_t)=\Theta\left(\frac{1}{n}\right)$
for all $1\le t\le n-1$, and $\P(E_0)= \Theta(1)$.

Next we consider a general prior distribution of $Q$, which induces a prior of $P$. Note that conditioned on the event $E_t$, the Bayes prediction risk of $P_{X_{n+1}|X^n}$ (or equivalently $I(P; X_{n+1}|X^n, E_t)$) equals to the Bayes prediction risk of $Q_{X_{n+1}|X_{t+1}^n}$ (or equivalently $I(Q;X_{n+1}|X_{t+1}^n)$) times $1-1/n$, the scaling factor between the transition probabilities under $Q$ and $P$ on $[k-1]$.

Therefore, the overall Bayes prediction risk of $P_{X_{n+1}|X^n}$ is lower bounded by
     \begin{align*}
        \sum_{t=0}^{n-1} \P(E_t)I(P; X_{n+1}| X^n, E_t) &= \sum_{t=0}^{n-1} \P(E_t)\cdot \left(1-\frac{1}{n}\right)I(Q; X_{n+1}| X_{t+1}^n) \\&\gtrsim \frac{1}{n}\sum_{t=0}^{n-1} I(Q;X_{n-t+1}|X^{n-t}) \\ &= \frac{1}{n}\left(I(Q; X^{n+1})-I(Q; X_{1})\right)\geq \frac{1}{n}\left(I(Q; X^{n+1})-\log \l\right).
     \end{align*}
Maximizing over the prior distributions of $Q$ leads to redundancy and proves Proposition \ref{prop:risk_lower_bound}. 

The simple embedding above is a bit wasteful as it designates a special emission symbol to signify the lazy state. As such, 
the case of $\ell=2$ is out of reach. 
Applying more delicate reductions, the next result (proved in Appendix \ref{sec:riskred-improved}) gives a risk lower bound based on the redundancy of HMMs with the same observation space, 
with the additional constraint that the stationary state distribution is uniform, which, for large $k$, has the same redundancy within constant factors. Thus this result is applicable to the case of binary and even continuous emissions such as Gaussians (Remark~\ref{rmk: gaussian risk}). 

\begin{proposition}\label{prop:risk_lb_general}
In the context of Corollary \ref{cor: any emission}, for all $k\ge 2$ it holds that
\begin{align*}
\risk(\PHMM_n(k,\calQ)) \gtrsim \frac{1}{n}\red(\PHMM_{n+1,\mathsf{U}}(k-1,\calQ)) - \frac{\log(nk) + \red(\calQ)}{n}, 
\end{align*}
where $\PHMM_{n,\mathsf{U}}(k,\calQ)$ is the set of all stationary HMMs with hidden states in $[k]$, emission distributions in $\calQ$, and a uniform stationary distribution for the hidden states.  
\end{proposition}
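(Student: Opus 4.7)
The plan is to extend the lazy-state embedding of \prettyref{prop:risk_lower_bound} to general emissions, where the main new difficulty is that no designated symbol (such as the ``$\ell$'' of \prettyref{prop:risk_lower_bound}) is available to mark visits to the lazy state. Given any base HMM $Q\in\PHMM_{n+1,\mathsf{U}}(k-1,\calQ)$ and a reference emission $Q_0\in\calQ$, I would construct a $k$-state HMM $P\in\PHMM_n(k,\calQ)$ by adjoining a lazy state $k$ that emits $Q_0$ and whose transitions mirror \prettyref{eq:reduction_transition}: state $k$ self-loops with probability $1-1/n$ and otherwise jumps uniformly to $[k-1]$; each state in $[k-1]$ transitions to $k$ with probability $1/n$ and otherwise follows $Q$ rescaled by $(n-1)/n$. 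The uniform-stationary hypothesis $\pi_Q=\frac{1}{k-1}\mathbf{1}$ then yields $\pi_P(k)=\frac{1}{2}$ and $\pi_P(j)=\frac{1}{2(k-1)}$ for $j\in[k-1]$, and guarantees that each exit from $k$ enters the sub-chain at its own stationary distribution. Defining $E_t=\{Z_1=\cdots=Z_t=k\}\cap\{Z_{t+1},\ldots,Z_{n+1}\in[k-1]\}$ for $0\le t\le n$, one checks $\P(E_t)\gtrsim 1/n$; conditional on $E_t$ with $t\ge 1$, the observations split as $X^t$ i.i.d.\ from $Q_0$ independent of $X_{t+1}^{n+1}$, the latter a length-$(n-t+1)$ trajectory of the inner HMM $Q$ started at $\pi_Q$.

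Next put the product prior $\pi\otimes\pi_0$ on $(Q,Q_0)$, where $\pi$ is a nearly redundancy-achieving prior on $\PHMM_{n+1,\mathsf{U}}(k-1,\calQ)$ and $\pi_0$ is a nearly redundancy-achieving prior on $\calQ$. The induced Bayes prediction risk $I(Q,Q_0;X_{n+1}|X^n)$ lower bounds $\risk(\PHMM_n(k,\calQ))$. Unlike in \prettyref{prop:risk_lower_bound}, the events $E_t$ are \emph{not} $\sigma(X^n)$-measurable, so one cannot directly decompose the posterior by conditioning on them. The workaround is to introduce an auxiliary random variable $\tau$ recording both the index $t$ of the prevailing $E_t$ and the state $Z_{t+1}$ entered upon exit from $k$ (with a special value on the ``bad'' event that no $E_t$ holds); since $\tau$ takes at most $O(nk)$ values, $H(\tau)\le \log(nk)+O(1)$, and the elementary identity $I(A;B|C)\ge I(A;B|C,D)-H(D)$ yields
\begin{align*}
I(Q,Q_0;X_{n+1}|X^n)\ge I(Q,Q_0;X_{n+1}|X^n,\tau)-\log(nk)-O(1).
\end{align*}

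On each favourable branch $\{\tau$ corresponds to $E_t\}$, the product prior together with the conditional independence $X^t\perp(Q,X_{t+1}^{n+1})|Q_0,E_t$ gives, via a direct posterior calculation, $I(Q;X_{n+1}|X^n,E_t)=I(Q;X_{n+1}|X_{t+1}^n,E_t)$, where the right-hand side inherits the dynamics of the inner HMM $Q$ started at $\pi_Q$. Summing against $\P(E_t)\gtrsim 1/n$, then applying stationarity and the chain rule, yields $\sum_{t=0}^n\P(E_t)\,I(Q;X_{n+1}|X_{t+1}^n,E_t)\gtrsim \tfrac{1}{n}I(Q;X^{n+1})$, and optimizing over $\pi$ converts the right-hand side into $\tfrac{1}{n}\red(\PHMM_{n+1,\mathsf{U}}(k-1,\calQ))$. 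The residual $\red(\calQ)/n$ correction absorbs the information about $Q_0$ carried by the prefix $X^t$ that must be subtracted when passing from $I(Q,Q_0;X_{n+1}|X^n,\tau)$ to the inner-HMM contribution $I(Q;X_{n+1}|X^n,\tau)$, since the ``bad'' branch and the $Q_0$-posterior both contribute to the total Bayes information at a level no worse than one sample's worth of $\calQ$-redundancy. The main obstacle is precisely this bookkeeping of the nuisance parameter $Q_0$: one must ensure that the inner HMM redundancy is cleanly isolated from the emission-identification information without losing more than $\log(nk)+\red(\calQ)$ bits in total; the remaining steps are routine invocations of the chain rule, stationarity, and the minimax redundancy-to-Bayes-information duality.
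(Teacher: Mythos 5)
Your overall strategy is the right one (lazy-state embedding, revealing the unobserved switching time as auxiliary information, conditioning to extract the inner HMM's redundancy), and it matches the paper's proof in spirit. However, there is a fatal quantitative gap at the step where you reveal $\tau$.

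You pay for revealing $\tau$ with the entropy bound $H(\tau)\le\log(nk)+O(1)$, using $I(A;B|C)\ge I(A;B|C,D)-H(D)$. But this loss is of order $\log(nk)$, with no $1/n$ factor, whereas the proposition's error term is $\frac{\log(nk)+\red(\calQ)}{n}$. Since the main term $\frac{1}{n}\red(\PHMM_{n+1,\mathsf{U}}(k-1,\calQ))$ is itself of order (at most) $\mathrm{polylog}(n)/n$, subtracting a full $\log(nk)$ makes the lower bound vacuous for all but tiny $n$. The correct way to pay for the auxiliary reveal is via the sharper consequence of the chain rule $I(A;B|C)\ge I(A;B|C,D)-I(D;B|A,C)$, i.e.\ the loss is $I(\tau;X_{n+1}\mid X^n,\theta)$, not $H(\tau)$. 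That quantity is genuinely small --- $\lesssim\frac{\log(nk)}{n}$ --- but proving it is not free: the paper's Lemma~\ref{lem:remainder} decomposes $T$ into indicators $U_t=\one_{T=t}$, exploits the chain rule and the Markov/stationarity structure term by term, and uses the fact that $U_2\sim\Bern(1/n)$ given $Z_1=k$ so that each increment contributes $O\!\left(\frac{\log(nk)}{n}\right)$ in total. Your sketch replaces this entire argument with the crude $H(\tau)$ bound and therefore loses a factor of $n$. This is the main missing idea; without it the proposal does not establish the stated inequality.

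Two smaller remarks. First, placing a redundancy-achieving prior $\pi_0$ on the lazy-state emission $Q_0$ is an unnecessary complication: the paper fixes $Q_0$ to an arbitrary element of $\calQ$, which both simplifies the conditioning argument and sidesteps the ``nuisance bookkeeping'' you flag at the end. Second, the $\red(\calQ)$ correction in the paper does not arise from $Q_0$ at all; it is the $I(\theta;Y_1)$ term that gets subtracted when applying the chain rule to $\sum_t I(\theta;Y_{n+2-t}\mid Y^{n+1-t})=I(\theta;Y^{n+1})-I(\theta;Y_1)$, and it is controlled using the capacity--redundancy identity \eqref{eq:red-cap} together with the uniform-stationary hypothesis. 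You should redirect your accounting accordingly.
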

\begin{corollary}\label{cor:general_emission_lb}
Suppose that there are constants $0\le c_1<c_2\le 1$ and a map $f: \calX\to \{0,1\}$ such that for all $c\in [c_1, c_2]$, there exists $Q_c\in\cal Q$ such that $f_{\#}Q_c=\mathsf{Bern}(c)$. Then
\begin{align*}
\Risk(\PHMM_n(k,\calQ)) \gtrsim \frac{k^2\log n}{n} + \frac{k}{n}\Red(\calQ^{\otimes \lfloor n/k \rfloor})
\end{align*}
as long as $k\ge C$ and $n\ge k^D$, where $(C,D)$ are absolute constants. 
\end{corollary}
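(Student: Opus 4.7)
The plan is to invoke \Cref{prop:risk_lb_general} and reduce the corollary to
\[
\red(\PHMM_{n+1,\mathsf{U}}(k-1,\calQ)) \;\gtrsim\; k^{2}\log n + k\,\Red(\calQ^{\otimes \lfloor n/k\rfloor}),
\]
after which the $(\log(nk)+\Red(\calQ))/n$ correction in the proposition is harmless: $\log(nk)\lesssim k^{2}\log n$ for $k\ge 2$, and $\Red(\calQ)\le \Red(\calQ^{\otimes \lfloor n/k\rfloor})$ by marginalizing any probability assignment on the latter. Since $\max(A,B)\ge (A+B)/2$, it suffices to prove the two halves of this redundancy lower bound separately.

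For the $k^{2}\log n$ term, I would exploit the map $f$ to reduce to binary emissions. Restrict to the subclass whose emissions lie in $\{Q_c : c\in[c_1,c_2]\}\subset\calQ$; since $f_{\#}Q_c=\Bern(c)$, the data-processing inequality for $\kl$ yields
\[
\red(\PHMM_{n+1,\mathsf{U}}(k-1,\calQ)) \;\ge\; \red\bigl(f_{\#}\PHMM_{n+1,\mathsf{U}}(k-1,\{Q_c : c\in[c_1,c_2]\})\bigr),
\]
and the right-hand side is the redundancy of a uniform-stationary binary-emission HMM class on $[k-1]$ with Bernoulli parameters restricted to $[c_1,c_2]$. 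Adapting the prior construction in the proof of \Cref{thm: redd lower discr} for $\ell=2$ (transition perturbations about the uniform kernel within the doubly-stochastic tangent space, paired with two distinct emission parameters inside $[c_1,c_2]$) preserves both the uniform-stationarity constraint and the $[c_1,c_2]$ restriction and should still yield $\Omega(k^{2}\log n)$ for $n\ge k^{D}$.

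For the $k\,\Red(\calQ^{\otimes \lfloor n/k\rfloor})$ term, I would use a cyclic subclass. Take the deterministic shift $z\mapsto z+1\bmod (k-1)$ on $[k-1]$, whose unique invariant distribution is uniform, with freely chosen emissions $(Q_1,\ldots,Q_{k-1})\in\calQ^{k-1}$. Conditional on $Z_1=z_1$, the sample $X^{n+1}$ splits deterministically into $k-1$ blocks each of size at least $\lfloor n/k\rfloor$, with observations in block $i$ i.i.d.\ from $Q_i$. Put a product prior $\Pi_0^{\otimes(k-1)}$ on $(Q_i)$, where $\Pi_0$ is chosen so that $I_{\Pi_0}(Q;X^{\lfloor n/k\rfloor})\gtrsim \Red(\calQ^{\otimes \lfloor n/k\rfloor})$ (such a $\Pi_0$ exists by definition of the minimax redundancy). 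Lower bounding the minimax redundancy by the Bayes one, using the chain rule together with $(Q_i)\perp Z_1$, and applying block-wise factorization,
\[
\red \;\ge\; I((Q_i);X^{n+1}) \;\ge\; I((Q_i);X^{n+1}\mid Z_1) - \log k \;\ge\; (k-1)\,I_{\Pi_0}(Q;X^{\lfloor n/k\rfloor}) - \log k,
\]
which is $\gtrsim k\,\Red(\calQ^{\otimes \lfloor n/k\rfloor})$ after a short case split on whether the main term dominates $\log k$ (in the other case the first lower bound $k^{2}\log n$ already dominates $\log k$).

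Combining both bounds and substituting into \Cref{prop:risk_lb_general} finishes the proof, for $k\ge C$ and $n\ge k^{D}$ chosen to dominate all sublinear-in-$n$ slack. The main obstacle I anticipate is the first step: confirming that the $\Omega(k^{2}\log n)$ redundancy lower bound of \Cref{thm: redd lower discr} genuinely survives the simultaneous imposition of the uniform-stationarity constraint (reducing the transition freedom from $k(k-1)$ to $(k-1)^{2}$ parameters, still $\Omega(k^{2})$) and the restriction of emissions to a possibly narrow sub-interval $[c_1,c_2]$. This will require a direct inspection of the prior construction in \Cref{thm: redd lower discr}, localizing its transition perturbations to the doubly-stochastic tangent space at the uniform kernel and pairing them with two well-separated emission parameters in $[c_1,c_2]$.
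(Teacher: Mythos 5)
Your plan matches the paper's proof: invoke \Cref{prop:risk_lb_general}, then split the redundancy lower bound for $\PHMM_{n,\mathsf{U}}$ into the $k^2\log n$ part (pushforward by $f$ to Bernoulli emissions, then the doubly-stochastic perturbation prior of Lemma~\ref{lem: high ent construction} fed into Lemma~\ref{prop: redundancy lower bound}) and the $k\Red(\calQ^{\otimes\lfloor n/k\rfloor})$ part (deterministic cycle, product prior on emissions, condition on $Z_1$ to factorize into blocks), absorbing the $\log(nk)+\Red(\calQ)$ slack as you describe. The obstacle you flag at the end is in fact already resolved by the hypotheses of Lemma~\ref{prop: redundancy lower bound}: it explicitly requires $Q$ doubly stochastic (hence uniform-stationary) and perturbed about the \emph{cycle} $\T_k$---not the uniform kernel, a small inaccuracy in your sketch---with $k$ fixed emission parameters $p_1,\ldots,p_k\in(t_1,t_2)$ rather than two, and $(t_1,t_2)$ can be chosen to be $(c_1,c_2)$, so both the uniform-stationarity and the $[c_1,c_2]$ constraints are built in.
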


\subsection{Lower bounding the redundancy of HMM}
The general idea of lower bounding the redundancy is via the variational representation $\Red(\calP) = \sup_{\mu} I(\theta; X)$, where $\theta\sim \mu$ is a random element of $\calP$ according to the distribution (prior) $\mu$, and conditioned on this element, the random variable $X$ follows the distribution $\theta$. Following \cite{Davisson81efficient}, lower bounding the 
mutual information $I(\theta;X)$ requires us to construct an estimator of $\theta$ that achieves small error on 
a sufficiently rich sub-model class (that we also need to construct), 
leading to high mutual information. 
For the challenging case of overcomplete HMM (e.g. $\ell=2$), this estimator is based on tensor decomposition.

\paragraph{Large $\ell$.} We start with the easy case of $\ell \ge k$, where the redundancy of the HMM mainly comes from the emission probabilities. The prior distribution $\mu$
is constructed as follows: the transition of the hidden states is a deterministic cycle $\operatorname{C}_k$ (i.e. $1\to 2\to \cdots\to k\to 1$), and the emission distributions are drawn independently: $(\theta_z)_{z\in[k]} \triangleq (P_{X|Z}(\cdot|z))_{z\in [k]}\sim \mu_0^{\otimes k}$, where $\mu_0$ is some prior distribution over $\calP([\ell])$, the collection of all probability measures on $[\ell]$. 
Therefore, 
\begin{align*}
I(\theta; X^n) = I(\theta; Z^n, X^n) - I(\theta; Z^n | X^n) &\stepa{\ge} I(\theta; X^n | Z^n) - H(Z^n) \\
&\stepb{\ge} \sum_{z\in [k]} I(\theta_z; (X_i: Z_i = z) | Z^n) - \log k \\
&\stepc{\ge} \sum_{z\in [k]} I(\theta_z; Y_z^{\lfloor n/k \rfloor}) - \log k = k I(\theta_1; Y_1^{\lfloor n/k \rfloor}) - \log k,
\end{align*}
where (a) is due to $I(\theta; Z^n) =0$, (b) follows from the mutual independence of $(\theta_z, (X_i: Z_i = z))_{z\in [k]}$ given $Z^n$, (c) introduces an auxiliary sequence of i.i.d.~random variables $Y_{z,1},Y_{z,2},\cdots\sim \theta_z$, and uses that the sample size of $Y_z$ is at least $\lfloor n/k \rfloor$ for each $z\in [k]$. Consequently, 
\begin{align*}
\Red(\PHMM_n(k,\ell)) = \sup_\mu I(\theta; X^n) \ge k\cdot \sup_{\mu_0} I(\theta_1; Y_1^{\lfloor n/k \rfloor}) - \log k \gtrsim k\ell\log \frac{n}{k\ell} - \log k
\end{align*}
for $n\gtrsim k\ell$, where we use the classical redundancy bound of i.i.d.~model $\Red(\calP([\ell])^{\otimes m}) = \Omega(\ell \log(m/\ell))$ if $m\gtrsim \ell$ (\cite{Davisson73universal}).  

\paragraph{Large $k$.} The analysis for the overcomplete case of large $k$ is far more challenging.
Without loss of generality consider $\ell = 2$. 
The lower bound on mutual information 
crucially relies on the following lemma, which shows that an estimator on tensor decomposition succeeds provided that the transition matrices are close to a deterministic cycle.

\begin{lemma}[Estimating the transition]\label{prop: redundancy lower bound}
    Let $0\leq t_1<t_2\leq 1$ be fixed constants. There exist positive constants $c_0,c_1,c_2,c_3$ and fixed $p_1,\cdots,p_k\in (t_1,t_2)$ such that when $k\ge c_0, n\ge k^{c_1}$ and:
    \begin{enumerate}
        \item the transition matrix $Q$ of the hidden states is doubly stochastic and $\|Q-\operatorname{C}_k\|_{\max}\leq k^{-c_2}$; 
        \item the emission probabilities are fixed as $\P(X = 1|Z = i) = p_i$ for all $i=1,2,\cdots,k$, 
    \end{enumerate}
   then there exists an estimator 
   $\widehat{Q}_k\left(X^n\right)$ such that
   \begin{equation}\label{eq: lem 4.1 est}
       \mathbb{P}_{X^n | Q}\left[\left\|\widehat{Q}_k\left(X^n\right)-Q\right\|_{\mathrm{F}}^2 \leq n^{-c_3}\right] \geq 0.99.
   \end{equation}
\end{lemma}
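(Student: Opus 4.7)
My plan exploits both hypotheses jointly. The near-cycle bound $\|Q-\operatorname{C}_k\|_{\max}\le k^{-c_2}$ makes the hidden trajectory essentially deterministic: any off-cycle transition happens with probability at most $k^{1-c_2}$ per step. The known distinct emissions $p_1,\ldots,p_k$ then enable a maximum-likelihood decoder to identify the cycle offset $Z_t$ from a short window of observations around time $t$. Once the trajectory is decoded, $Q$ is estimated by empirical transition frequencies. This bypasses genuine tensor decomposition at the price of sharper use of the structural hypotheses.

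\textbf{Decoder design.} Fix $p_1,\ldots,p_k$ equally spaced in $(t_1,t_2)$. For a window length $m$ to be chosen and a candidate starting offset $z\in[k]$, let $\mu_z^{(m)}=\bigotimes_{j=0}^{m-1}\Bern(p_{((z+j-1)\bmod k)+1})$ be the cycle-consistent product law of the window. Bhattacharyya tensorization, together with the $\Theta(1/k)$ adjacent gap among the $p_i$'s, yields $\hellinger(\mu_z^{(m)},\mu_{z'}^{(m)})^2\ge 1-\exp(-am/k)$ uniformly over $z\neq z'$ (the bottleneck being nearest-neighbor cyclic shifts), for a constant $a>0$ depending only on $(t_1,t_2)$. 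Picking $m=Ck\log k$ for $C$ large enough forces this separation to be $\ge 1-k^{-10}$. On the deviation-free event of the window (probability $\ge 1-mk^{1-c_2}$) the observed block has law $\mu_{Z_{t+1}}^{(m)}$, and the maximum-likelihood decoder $\widehat Z_{t+1}=\arg\max_z \mu_z^{(m)}(X_{t+1}^{t+m})$ errs with probability at most $k^{-9}+mk^{1-c_2}$ by a union bound over the $k-1$ wrong alternatives.

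\textbf{Trajectory decoding and transition estimation.} Choosing $c_2>c_1+O(1)$ makes the per-window error sum over $t\in[n]$ be at most $0.001$, so $\widehat Z_t=Z_t$ for every $t$ simultaneously on an event $\calE$ of probability $\ge 0.999$. Define
\[
\widehat Q_k(X^n)(i,j)\;=\;\frac{\sum_{t=1}^{n-1}\one_{\widehat Z_t=i,\,\widehat Z_{t+1}=j}}{\sum_{t=1}^{n-1}\one_{\widehat Z_t=i}},
\]
which on $\calE$ equals the oracle empirical transition matrix built from the true trajectory. Double stochasticity of $Q$ together with the near-cycle structure guarantees each state is visited $\Theta(n/k)$ times (the exact cycle visits each state $\lfloor n/k\rfloor$ or $\lceil n/k\rceil$ times, and off-cycle transitions contribute only lower-order fluctuations). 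Conditional on the visit times, transitions out of state $i$ are i.i.d.\ draws from $Q(i,\cdot)$, and Bernstein's inequality yields $\|\widehat Q_k-Q\|_F^2\lesssim k^2\log n/n$ on an event of probability $\ge 0.999$. Intersecting with $\calE$ and picking $c_1>2$, $c_3<1-2/c_1$ forces the right-hand side below $n^{-c_3}$, establishing \eqref{eq: lem 4.1 est}.

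\textbf{Main obstacle.} The subtle step is the Hellinger separation: since adjacent emissions $p_i,p_{i+1}$ can be only $\Theta(1/k)$ apart (pigeonhole among $k$ values in a bounded interval), distinguishing nearest-neighbor cyclic shifts requires a window of length at least $\asymp k\log k$, which dictates the required size of $c_2$ (to keep deviation-free windows likely at this scale) and of $c_1$ (to drive the Bernstein error to polynomial decay). A secondary subtlety is that windows straddling a rare cycle break may fool the decoder; I would guard against this by a likelihood-ratio threshold that flags and discards such outputs, then verify that the fraction of discarded transitions is $O(k^{-(c_2-2)})$ and perturbs $\widehat Q_k$ by at most the same amount, which is absorbed into the final $n^{-c_3}$ bound.
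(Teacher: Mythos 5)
Your approach is genuinely different from the paper's. The paper estimates the population moment tensor $M=P_{X_{-L}^L}$ from near-i.i.d.\ windows (\Cref{lem: sampling windows}) and then applies a stable tensor decomposition (\Cref{lem: tensor decomp}, \Cref{lem: tensor to trans}); every error term in that pipeline scales as a negative power of $n$ (modulo a $\mathrm{poly}(k)$ factor, which is absorbed once $n\ge k^{c_1}$ for $c_1$ large). Your approach instead decodes the latent trajectory $\widehat Z_t$ from a short window and then forms empirical transition counts. This is an attractive idea because the near-cycle hypothesis makes the trajectory nearly deterministic, but the plan has a gap that I do not think can be patched without fundamentally changing the estimator.

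\textbf{The gap: a non-vanishing bias.} Your per-window decoder error is, as you compute, bounded by $k^{-\Omega(C)} + m\,k^{1-c_2}$ with $m = Ck\log k$. Both terms are fixed functions of $k, C, c_2$; neither decays as $n\to\infty$. Consequently, as $n\to\infty$ with $k$ fixed, a constant (in $n$) fraction $\varepsilon=\varepsilon(k,C,c_2)>0$ of the $\widehat Z_t$'s are wrong, and by the law of large numbers $\widehat Q_k(i,j) \to \P(\widehat Z_t=i,\widehat Z_{t+1}=j)/\P(\widehat Z_t=i)$, a quantity that differs from $Q(i,j)$ by $\Omega(\varepsilon)$ in general (e.g.\ even when $Q=\T_k$ exactly, emission noise alone produces spurious $i\to i$ and $i\to i+2$ counts at rate $\varepsilon$, giving $\|\widehat Q_k-Q\|_{\rm F}^2\gtrsim k\varepsilon^2$). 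Thus $\|\widehat Q_k(X^n)-Q\|_{\rm F}^2$ converges to a \emph{fixed positive constant} depending only on $k$, and the claimed bound $n^{-c_3}$ is violated for all sufficiently large $n$. The lemma, however, must hold for \emph{all} $n\ge k^{c_1}$ (this is exactly the regime invoked in the redundancy lower bound, $n\ge k^D$ unbounded), so the conclusion fails. Your statement that ``choosing $c_2>c_1+O(1)$ makes the per-window error sum over $t\in[n]$ be at most $0.001$'' is incorrect: the decoder-noise term $k^{-\Omega(C)}$ is independent of $c_2$, and $n\cdot k^{-\Omega(C)}$ diverges once $n>k^{\Omega(C)}$. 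There is no choice of a window length $m$ that rescues this: taking $m$ growing with $n$ (e.g.\ $m\asymp k\log n$) drives the noise error down but drives the window-break probability $1-(1-k^{1-c_2})^m$ up to $1$ once $m\gg k^{c_2-1}$, so the tradeoff is stuck at a $k$-dependent, $n$-independent error floor.

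\textbf{Why the discard does not help.} Your guard — discard time steps flagged by a likelihood-ratio test — makes the bias \emph{worse} in the relevant direction. The flagged windows are exactly those straddling off-cycle transitions, which are the events carrying all the information about $Q(i,j)$ for $j\ne i+1$ (each of size at most $k^{-c_2}$). Selecting on cycle-consistent windows systematically censors these events and drives $\widehat Q_k$ toward $\T_k$, leaving a residual $\|\widehat Q_k-Q\|_{\rm F}^2\asymp k^{2-2c_2}$ that does not decay with $n$. Your claim that this perturbation ``is absorbed into the final $n^{-c_3}$ bound'' only holds for $n\lesssim k^{(c_2-2)/c_3}$, a bounded range; it fails in the regime $n\to\infty$ with $k$ fixed.

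\textbf{What the paper's method buys.} By estimating the moment tensor rather than the trajectory, the paper avoids the per-step decoding error floor entirely: $\widehat M_n$ is a consistent estimator of a population quantity, so its error $\to 0$ as $n\to\infty$, and the tensor decomposition map $M\mapsto Q$ is deterministic and Lipschitz (for near-cycle $Q$, by the condition-number bounds in \Cref{lem:singular_value_LB} and the perturbation estimate \eqref{eq: frob ub moment}). The trajectory is never explicitly decoded; it is marginalized out into the moments. This is precisely the structural advantage that a trajectory-pointwise decoder cannot replicate with a fixed window length.

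\textbf{One minor note.} Your Hellinger tensorization bound $1-\exp(-am/k)$ for adjacent shifts is actually correct — the dominant contribution is not the $\Theta(1/k)$ nearest-neighbour gaps but the $\approx m/k$ ``wrap-around'' coordinates where $p_k$ meets $p_1$ with a $\Theta(1)$ gap. The per-coordinate $\Theta(1/k^2)$ contributions from the non-wrap coordinates give only $\exp(-am/k^2)$, which alone would be too weak; it is worth making the role of the wrap-around explicit if you pursue this direction.
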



The constraints imposed on $Q$ in Lemma \ref{prop: redundancy lower bound} still result in a sufficiently rich model: one can show that there is a prior distribution $\mu$ supported on this set such that $h(Q) \gtrsim -k^2\log k$ for $Q\sim \mu$, where $h(\cdot)$ is the differential entropy (Lemma~\ref{lem: high ent construction}). For $Q\sim \mu$, a direct consequence of Lemma \ref{prop: redundancy lower bound} is $h(Q|\widetilde{Q}_k(X^n)) \lesssim -k^2\log n$ (Lemma~\ref{lem: red partial recovery}). Therefore, under this prior $\mu$ we have
\begin{align}\label{eq: high ent}
I(Q; X^n) = h(Q) - h(Q|X^n) \ge h(Q) - h(Q|\widetilde{Q}_k(X^n)) \gtrsim k^2\log n
\end{align}
as long as $n\ge k^D$ for a large constant $D>0$. 
The above two cases lead to the following theorem. 

\begin{theorem}[Lower bound on the redundancy]\label{thm: redd lower discr}
    There exist universal constants $c_0, c_1, c_2, D>0$ such that
    \begin{equation*}
        \red(\PHMM_n(k,\ell)) \ge c_0\left( k^2\log \frac{n}{k^2}+k\l\log \frac{n}{k\l}\right),
    \end{equation*}
    if either $k\ge \max\{\ell, c_1\}$ and $n>k^D$, or $n\geq c_2k\l$ and $\l\geq k$.
\end{theorem}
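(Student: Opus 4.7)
The plan is to prove the two terms $k\ell \log(n/(k\ell))$ and $k^2 \log(n/k^2)$ separately in the respective regimes stated in the theorem. A quick arithmetic check shows that when $\ell \ge k$ and $n \ge c_2 k\ell$ the first term dominates the second up to constants (since $(k/\ell)(\log(n/k^2)/\log(n/(k\ell)))$ is bounded by a constant depending only on $c_2$), while when $k \ge \ell$ and $n \ge k^D$ the second dominates the first; hence establishing each bound in its natural regime suffices to recover the claimed sum.

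For the emission-rich case $\ell \ge k$, $n \ge c_2 k\ell$, the argument in the \emph{Large $\ell$} paragraph above is essentially complete. Using the variational formula $\Red(\PHMM_n(k,\ell)) = \sup_\mu I(\theta; X^n)$, I would fix the transition matrix to the deterministic cycle $\T_k$ and place an independent prior $(\theta_z)_{z\in[k]} \sim \mu_0^{\otimes k}$ on the $k$ emission distributions. The chain of inequalities
$$I(\theta; X^n) \ge I(\theta; X^n \mid Z^n) - H(Z^n) \ge \sum_{z\in[k]} I(\theta_z; (X_i: Z_i=z) \mid Z^n) - \log k$$
combined with the fact that under $\T_k$ each state is visited exactly $\lfloor n/k\rfloor$ times reduces everything to the classical i.i.d.~redundancy over $[\ell]$. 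Choosing $\mu_0$ that witnesses $\Red(\calP([\ell])^{\otimes \lfloor n/k\rfloor}) \gtrsim \ell \log(n/(k\ell))$ (valid when $n/k \gtrsim \ell$, \cite{Davisson73universal}) and multiplying by $k$ yields the claim after absorbing the negligible $\log k$.

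For the overcomplete case $k \ge \max\{\ell, c_1\}$, $n \ge k^D$, I would reduce to $\ell = 2$ by monotonicity of $\Red$ in the class (binary emissions embed into any $\ell \ge 2$ by setting the extra symbols to zero mass). Following the \emph{Large $k$} sketch, three ingredients come together: the tensor-decomposition estimator of Lemma~\ref{prop: redundancy lower bound}, the high-entropy prior $\mu$ on doubly-stochastic perturbations of $\T_k$ supplied by Lemma~\ref{lem: high ent construction} (satisfying $h(Q) \gtrsim -k^2 \log k$), and the conversion result Lemma~\ref{lem: red partial recovery} that turns the $n^{-c_3}$ Frobenius-error guarantee into a posterior-entropy bound $h(Q \mid X^n) \le h(Q \mid \widehat Q_k(X^n)) \lesssim -k^2 \log n$. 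Subtracting,
$$I(Q; X^n) = h(Q) - h(Q \mid X^n) \gtrsim k^2 \log n - O(k^2 \log k) \gtrsim k^2 \log(n/k^2),$$
provided $D$ is chosen large enough.

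The main obstacle lies in the overcomplete case, and specifically in establishing Lemma~\ref{prop: redundancy lower bound}: constructing a consistent estimator for $Q$ to within $n^{-c_3}$ in Frobenius norm from binary observations of a chain whose mixing time is $\Theta(k^{c_2})$. Standard spectral or concentration-based estimators break down at this slow mixing, so the proof must exploit the rigid cycle structure directly. A natural route is to form empirical moments of consecutive windows such as $(X_t, X_{t+1}, X_{t+2})$ and use the distinct fixed emission parameters $p_1, \ldots, p_k$ (placed in general position within $(t_1, t_2)$) to identify the cycle alignment via a tensor decomposition, and then recover the $O(k^2)$ off-cycle perturbations of $Q$ by solving a local linear system. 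The companion Lemma~\ref{lem: high ent construction} is a volume/packing computation in a neighborhood of $\T_k$ inside the Birkhoff polytope, combinatorial but routine. Once these two ingredients are in place, the remaining assembly into the mutual-information lower bound is bookkeeping.
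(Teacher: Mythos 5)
Your proposal follows essentially the same architecture as the paper: in the emission-rich regime you fix the cycle transition $\T_k$ and reduce to $k$ independent copies of the i.i.d.~redundancy over $[\ell]$, and in the overcomplete regime you reduce to $\ell=2$, put a high-entropy prior on near-cycle doubly-stochastic transitions, invoke the tensor-decomposition estimation guarantee, and convert the Frobenius-norm accuracy into a lower bound on $I(Q;X^n)$ via the differential-entropy gap. Your preliminary arithmetic showing that in each stated regime the relevant term dominates the other up to constants is a useful addition that the paper leaves implicit, and it is correct as you argue. I will flag two issues with your sketch of Lemma~\ref{prop: redundancy lower bound}, since you rightly identify it as the main obstacle and your proposed route would fail as written.

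First, forming moments of fixed-length windows like $(X_t,X_{t+1},X_{t+2})$ cannot work when $\ell=2$ and $k$ is large: a $2\times 2\times 2$ empirical tensor has rank at most~$2$ and cannot encode $k$ distinct rank-one components, so the decomposition is not identifiable. The paper instead uses windows of length $L=\Theta(\log k)$, yielding a $2^L\times 2^L\times 2$ moment tensor $M=\tfrac1k\sum_z e_z\otimes f_z\otimes o_z$ (see~\eqref{eq:tensor_decomposition}); the $2^L$-dimensional forward/backward vectors $e_z,f_z$ are what carry enough information to separate $k$ states under binary emissions. Second, and equally important, you cannot form these moment estimates by naive empirical averaging across the trajectory: the chain mixes on a $\Theta(n)$ timescale (the perturbation from $\T_k$ is of size $k^{-c_2}$, and the relevant walk time to randomize is $\Theta(k^{c_2})$), so consecutive windows are nearly deterministic functions of each other. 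The paper's Lemma~\ref{lem: sampling windows} addresses this by subsampling $J=\lceil n^{1/3}\rceil$ windows at random integer offsets drawn i.i.d.\ from a distribution designed so that the sampled hidden states are approximately uniform and independent, giving the near-i.i.d.\ structure needed for the Hoeffding bound~\eqref{eq:first_step}. Without that device, the ``empirical moment'' step in your sketch has no concentration guarantee. Once these two ingredients are in place, your remaining assembly (condition numbers via the random choice of $p_i$, Lemmas~\ref{lem: tensor decomp} and~\ref{lem: tensor to trans}, then Lemmas~\ref{lem: high ent construction} and~\ref{lem: red partial recovery}) matches the paper.
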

Combined with the reduction in \Cref{sec:reduction}, we conclude the proof of the lower bounds.

\printbibliography
%
\appendix
\section{Preliminaries and technical lemmas}\label{appdx: lemmas}
Recall the following information-theoretic quantities.
For probability distributions $P_X$ and $Q_X$ on the space $\calX$, 
the KL divergence is $\kl(P_X\|Q_X) = \Expect_P\left[ \log \frac{\mathrm{d}P_X}{\mathrm{d}Q_X}\right]$ if $P_X\ll Q_X$
and $\infty$ otherwise.
 The conditional KL divergence is 
$\kl(P_{X|Y}\|Q_{X|Y}|P_Y) = 
\Expect_{y\sim P_Y}[\kl(P_{X|Y=y}\|Q_{X|Y=y})]$. 
The mutual information between random variables $X$ and $Y$ is 
defined as $I(X;Y) = 
\kl(P_{XY}\|P_{X}\otimes P_Y) =\kl(P_{X|Y}\|P_{X}|P_Y) $
and 
the conditional mutual information is defined similarly $I(X;Y|Z) = \kl(P_{X|Y,Z}\|P_{X|Z}| P_{Y,Z})$.

We use $o, O, \omega, \Omega, \Theta$ following the common big-O notations, where an added $\widetilde{(\cdot)}$ denotes ignoring $\log$ factors. We also use $\lesssim, \gtrsim, \asymp$ to denote comparison ignoring universal constants. We shorthand $\naturals=\{1,2,\dots\}$ and $[t]=\{1,2,\dots, t\}$.

\subsection{Comparison with the formulation in \cite{sharan2018prediction}}\label{appdx: losses}
Recall that our prediction risk 
with respect to a true model $P_{X^{n+1}}$ is defined as
\begin{equation}
\risk(Q_{X_{n+1}|X^n};P_{X^{n+1}})=\E_{P_{X^{n+1}}}
[\kl(P_{X_{n+1}|X^n} \| Q_{X_{n+1}|X^n})]
    \label{eq:riskapp1}
\end{equation}
which compares a prediction algorithm $Q_{X_{n+1}|X^n}$ to the oracle  prediction $P_{X^{n+1}|X^n}$. 
Maximizing $P_{X^{n+1}}$ in a given model class, e.g., HMM, 
 leads to the 
 worst-case risk in \eqref{eq:risk}.

In \cite{sharan2018prediction}, the authors formulated the prediction problem differently as follows. First, it is assumed that observed sample path can be extended to a double-sided process $(X_t)_{t \in \integers}$. Then, for a sequence of predictors 
$Q_{X_{t+1}|X^t}$ indexed by the sample size $t=1,\ldots,n$
they consider the prediction loss in TV with respect to the respective oracle $P_{X^{t+1}|X^t_{-\infty}}$ and define:
\begin{equation}
\risk'(\{Q_{X_{t+1}|X^t}\}_{t=1}^n;P_{X^{n+1}}) 
=
\E_{X_{-\infty}^{n+1}} \qth{\frac{1}{n}  \sum_{t=1}^n \tv(P_{X_{t+1}|X_{-\infty}^t}, Q_{X_{t+1}|X^t})}.
    \label{eq:sharan}
\end{equation}
It is not straightforward to compare results under this formulation to our results partly due to this averaging over $t$, which means the prediction guarantee is not made for a given sample size $n$ by on average for a random sample size drawn uniformly from $1$ to $n$.
Nevertheless, a firm comparison one can make is the following. In the spirit of \prettyref{eq:sharan}, consider the following variant of \prettyref{eq:riskapp1}:
\begin{equation}
\risk''(Q_{X_{n+1}|X^n};P_{X^{n+1}})=\E_{P_{X^{n+1}}}
[\kl(P_{X_{n+1}|X^n_{-\infty}} \| Q_{X_{n+1}|X^n})].
    \label{eq:riskapp2}
\end{equation}
In other words, the goal is to compete with an oracle who only knows the true model parameters but also has access to infinite historical data. While this appears to be a more difficult task, 
for HMM the difference of these two risks is in fact negligible. Indeed, by the chain rule, we have for any predictor $Q$ and any model $P$,
$$\risk'' -\risk=I(X_{-\infty}^0; X_{n+1}|X^n)\leq I(Z_1; X_{n+1}|X^n)$$which, for HMM with $k$ hidden states, is at most $\log k/n$, thanks to \eqref{eq: decay info}. As a result, all of our results proved in $\risk$ (which is more natural in our settings) apply  immediately to $\risk''$.

\subsection{Technical lemmas}
 \begin{lemma}\label{lem: red partial recovery} Let $U\to X\to \widehat{U}$ be a Markov chain with $U$ being a continuous random variable with a density function $f_U$ 
 taking values in  $[0,t]^d$, and $\|\widehat{U} - U\|_2^2 \le d\varepsilon^2$ with probability at least $0.99$. 
Let $h(U) \triangleq \int f_U(u)du \log \frac{1}{f_U(u)}$ denote the differential entropy of $U$.  
 Then 
 \begin{align*}
    I(U; X) \ge h(U) + d\log\frac{1}{\varepsilon\sqrt{2\pi e}} - 0.01d\log\frac{1}{t} - \log 2. 
 \end{align*}
 \end{lemma}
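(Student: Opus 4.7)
The plan is to apply the data-processing inequality along $U\to X\to \widehat U$, reducing the task to lower bounding $I(U;\widehat U) = h(U) - h(U|\widehat U)$. It therefore suffices to establish an upper bound on $h(U|\widehat U)$ of the form $d\log(\varepsilon\sqrt{2\pi e}) + O(d\log t) + O(1)$ and then rearrange.

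To bound the conditional differential entropy, I would split on the indicator $Z = \mathbf{1}_E$ of the good event $E=\{\|U-\widehat U\|_2^2 \le d\varepsilon^2\}$. Since $Z$ is binary, the chain rule gives
\[
h(U|\widehat U) \le h(U|\widehat U, Z) + H(Z) \le P(E)\, h(U|\widehat U, E) + P(E^c)\, h(U|\widehat U, E^c) + \log 2.
\]
On the good event, for each realization $\widehat U = \hat u$ the translate $U - \hat u$ has conditional second moment bounded by $d\varepsilon^2$, so the Gaussian maximum-entropy principle (maximizer over second-moment-constrained densities on $\reals^d$) yields $h(U|\widehat U = \hat u, E) \le \tfrac{d}{2}\log(2\pi e\varepsilon^2) = d\log(\varepsilon\sqrt{2\pi e})$. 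On the complementary event, the only constraint retained is $U\in [0,t]^d$, which gives the volume bound $h(U|\widehat U=\hat u, E^c) \le d\log t$ attained by the uniform distribution on the cube. Combining these with $P(E)\le 1$ and $P(E^c)\le 0.01$ collapses the two contributions into the required upper bound on $h(U|\widehat U)$, after which the data-processing step finishes the proof.

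The proof is a short exercise and the only substantive ingredient is the conditional Gaussian maximum-entropy inequality, which is what converts a high-probability $L_2$-recovery guarantee into a conditional differential-entropy bound whose leading term is $d\log\varepsilon$. There is no serious obstacle beyond carefully tracking the constants coming from $P(E)\ge 0.99$ and the binary entropy $H(Z)\le \log 2$; the term $0.01\, d\log(1/t)$ in the statement is simply the residual contribution from the bad event $E^c$, where no better bound on $U$ is available than its ambient confinement to $[0,t]^d$.
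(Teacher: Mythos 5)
Your argument mirrors the paper's proof essentially step for step: data processing to reduce to $I(U;\widehat U)$, the binary split on $\one_E$ absorbing $\log 2$ via $I(U;\one_E|\widehat U)\le H(\one_E)$, the Gaussian maximum-entropy bound on the good event, and the $d\log t$ volume bound on the bad event. The only cosmetic deviation is that you apply the Gaussian max-entropy bound pointwise to $h(U\mid \widehat U=\hat u, E)$ rather than first dropping the conditioning on $\widehat U$ and bounding $h(U-\widehat U\mid E)$ as the paper does; both are fine, and you share with the paper the same harmless looseness in not tracking the signs of $\P(E)\,d\log(\varepsilon\sqrt{2\pi e})$ versus $d\log(\varepsilon\sqrt{2\pi e})$ when $\varepsilon$ is small (which costs at most a factor $0.99$ on the leading term and affects neither proof's use of the lemma).
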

 \begin{proof}
     Let $E$ be the event that $\|\widehat{U} - U\|_2^2 \le d\varepsilon^2$. Then
     \begin{align*}
        I(U;X) \ge I(U; \widehat{U}) &= h(U) - h(U|\widehat{U}) \\
        &= h(U) - h(U|\widehat{U}, \one_E) - I(U; \one_E | \widehat{U}) \\
        &\ge h(U) - h(U|\widehat{U}, E)\P(E) - h(U|\widehat{U}, E^c)\P(E^c) - \log 2 \\
        &\stepa{\ge} h(U) - h(U-\widehat{U}|\widehat{U}, E)\P(E) - 0.01d\log\frac{1}{t} - \log 2\\
        &\ge h(U) - h(U-\widehat{U}|E) - 0.01d\log\frac{1}{t} - \log 2 \\
        &\stepb{\ge} h(U) +d\log\frac{1}{\varepsilon\sqrt{2\pi e}} - 0.01d\log\frac{1}{t} - \log 2, 
     \end{align*}
    where (a) and (b) apply the fact that the differential entropy is maximized by the uniform (resp.~Gaussian) distribution subject to a support (resp.~second moment) constraint. 
 \end{proof}
 

The following lemma bounds the change of the prediction risk when certain auxiliary information is observed.

\begin{lemma} \label{lem: partial_reveal}
For a generic prior on $\theta$, the model parameters, and an auxiliary random variable $U$, it holds that
\begin{align*}
&\inf_{\widehat{P}_{X_{n+1}|X^n, U}} \E[\kl(P_{X_{n+1}|X^n,\theta, U}\|\widehat{P}_{X_{n+1}|X^n, U})] \\
&\le \inf_{\widehat{P}_{X_{n+1}|X^n}} \E[\kl(P_{X_{n+1}|X^n,\theta}\|\widehat{P}_{X_{n+1}|X^n})] + I(U;X_{n+1}|X^n,\theta). 
\end{align*}
where the expectation is taken with respect to both the model parameters $\theta$ according to the prior and the observations $X^n,U$
\end{lemma}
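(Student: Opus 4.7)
The plan is a short chain-rule manipulation that essentially reduces the lemma to the standard chain rule for mutual information. First I would relax the infimum on the left-hand side by restricting to predictors that do not use $U$: since any $\widehat{P}_{X_{n+1}|X^n}$ is a legitimate $\widehat{P}_{X_{n+1}|X^n,U}$ (namely one that happens to ignore $U$), we have
\begin{align*}
\inf_{\widehat{P}_{X_{n+1}|X^n, U}} \E[\kl(P_{X_{n+1}|X^n,\theta, U}\|\widehat{P}_{X_{n+1}|X^n, U})] \le \inf_{\widehat{P}_{X_{n+1}|X^n}} \E[\kl(P_{X_{n+1}|X^n,\theta, U}\|\widehat{P}_{X_{n+1}|X^n})].
\end{align*}

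Next, for any fixed $U$-independent predictor $\widehat{P}_{X_{n+1}|X^n}$, I would split the log-ratio inside the KL using the identity
\begin{align*}
\log \frac{P(X_{n+1}|X^n,\theta, U)}{\widehat{P}(X_{n+1}|X^n)} = \log \frac{P(X_{n+1}|X^n,\theta, U)}{P(X_{n+1}|X^n,\theta)} + \log \frac{P(X_{n+1}|X^n,\theta)}{\widehat{P}(X_{n+1}|X^n)},
\end{align*}
and then take expectations under the full joint distribution of $(\theta, X^n, U, X_{n+1})$. The first piece integrates (over $U, X_{n+1}$ given $X^n, \theta$, then over $X^n, \theta$) to $I(X_{n+1}; U\,|\,X^n,\theta)$; the second piece does not involve $U$ inside the logarithm, so after integrating out $U$ and then $X_{n+1}$ given $X^n,\theta$ it yields $\E[\kl(P_{X_{n+1}|X^n,\theta}\|\widehat{P}_{X_{n+1}|X^n})]$. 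This gives
\begin{align*}
\E[\kl(P_{X_{n+1}|X^n,\theta, U}\|\widehat{P}_{X_{n+1}|X^n})] = I(X_{n+1};U\,|\,X^n,\theta) + \E[\kl(P_{X_{n+1}|X^n,\theta}\|\widehat{P}_{X_{n+1}|X^n})].
\end{align*}

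Finally, since the mutual-information term is independent of $\widehat{P}_{X_{n+1}|X^n}$, I would take the infimum of both sides over all such predictors and chain with the restriction inequality from the first step, recovering exactly the claimed bound. There is essentially no real obstacle here; at its core the lemma is the chain rule $I(\theta,U;X_{n+1}|X^n)=I(\theta;X_{n+1}|X^n)+I(U;X_{n+1}|X^n,\theta)$, and dropping the nonnegative term $I(U;X_{n+1}|X^n)$ (which is what the restriction to $U$-independent predictors costs) yields the one-sided inequality stated. The only minor care point is justifying the exchange of infimum and expectation in identifying the Bayes-optimal predictors, but this is avoided entirely by the ``use a feasible suboptimal predictor'' argument above.
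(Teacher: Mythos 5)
Your proof is correct, and while it rests on the same underlying fact (the chain rule for conditional mutual information), the execution is genuinely different from the paper's. The paper first invokes the mutual-information representation of the Bayes prediction risk, i.e.\ that $\inf_{\widehat{P}_{X_{n+1}|X^n}} \E[\kl(P_{X_{n+1}|X^n,\theta}\|\widehat{P}_{X_{n+1}|X^n})] = I(\theta;X_{n+1}|X^n)$ (and the analogous identity with $U$), thereby reducing the lemma to the inequality $I(\theta;X_{n+1}|X^n, U) \le I(\theta;X_{n+1}|X^n) + I(U;X_{n+1}|X^n, \theta)$, which is the chain rule minus the nonnegative $I(U;X_{n+1}|X^n)$. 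You bypass this representation entirely: restricting the left-hand infimum to $U$-oblivious predictors is a pure feasibility argument, and the log-ratio decomposition followed by integration gives you the two terms directly, without ever needing to identify the Bayes-optimal predictor or justify attainment of the infimum. Your route buys self-containedness and avoids citing the representation result; the paper's route buys brevity, compressing the argument to a two-line chain-rule observation once the representation is in hand. Both are correct, and you rightly observe that they are really the same inequality seen from two sides.
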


\begin{proof}
By the mutual information representation of the prediction risk (cf. \cite[Appendix A]{HJW23}), the statement is equivalent to
\begin{align*}
I(\theta;X_{n+1}|X^n, U) \le I(\theta;X_{n+1}|X^n) + I(U;X_{n+1}|X^n, \theta). 
\end{align*}
This is obvious since
\begin{align*}
I(\theta;X_{n+1}|X^n, U) = I(\theta;X_{n+1}|X^n) + I(U;X_{n+1}|X^n, \theta) - I(U;X_{n+1}|X^n)
\end{align*}
by the chain rule of mutual information.
\end{proof}

\begin{lemma}[Mirsky's theorem, \cite{Mirsky1960SYMMETRICGF}]\label{lem: Mirsky}
    For matrices $A, E \in \mathbb{R}^{m \times k}$ with $m \geq k$, it holds that $$
 \sum_{i=1}^k\left(\sigma_i(A+E)-\sigma_i(A)\right)^2 \leq\|E\|_{\mathrm{F}}^2,  $$
 where $\sigma_i (i\in [k])$ are the sorted singular values.
 \end{lemma}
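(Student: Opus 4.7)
The plan is to reduce Mirsky's theorem to the classical Hoffman--Wielandt inequality for symmetric matrices via the Jordan--Wielandt dilation. Set $B = A+E$ and form the symmetric $(m+k)\times(m+k)$ matrices
\[
\widetilde A \;=\; \begin{pmatrix} 0 & A \\ A^\top & 0 \end{pmatrix}, \qquad
\widetilde B \;=\; \begin{pmatrix} 0 & B \\ B^\top & 0 \end{pmatrix}.
\]
A standard computation (using the SVD $A = U\Sigma V^\top$ and the orthogonal change of basis $\frac{1}{\sqrt 2}\begin{pmatrix} U & 0 \\ 0 & V \end{pmatrix}\begin{pmatrix} I & I \\ I & -I\end{pmatrix}$ appropriately padded) shows that the spectrum of $\widetilde A$ consists of $\{+\sigma_i(A)\}_{i=1}^k \cup \{-\sigma_i(A)\}_{i=1}^k$ together with $m-k$ zero eigenvalues; the analogous statement holds for $\widetilde B$.

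The next step is to invoke Hoffman--Wielandt: for symmetric $H_1,H_2$ of the same size, if eigenvalues are listed in decreasing order then
\[
\sum_i \bigl(\lambda_i(H_1) - \lambda_i(H_2)\bigr)^2 \;\leq\; \|H_1 - H_2\|_{\mathrm F}^2.
\]
Applied to $H_1 = \widetilde A$ and $H_2 = \widetilde B$, the sorted eigenvalues of $\widetilde A$ in decreasing order are
\[
\sigma_1(A),\,\dots,\,\sigma_k(A),\,\underbrace{0,\dots,0}_{m-k},\,-\sigma_k(A),\,\dots,\,-\sigma_1(A),
\]
and the same structure holds for $\widetilde B$. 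Position-by-position subtraction therefore yields $(\sigma_i(A)-\sigma_i(B))^2$ from the first $k$ positions, zeros from the middle $m-k$ positions, and another copy of $(\sigma_i(A)-\sigma_i(B))^2$ from the last $k$ positions. Hence the left-hand side equals $2\sum_{i=1}^k(\sigma_i(A)-\sigma_i(B))^2$. The right-hand side is $\|\widetilde A - \widetilde B\|_{\mathrm F}^2 = 2\|A-B\|_{\mathrm F}^2 = 2\|E\|_{\mathrm F}^2$, since the off-diagonal block structure doubles the Frobenius norm squared. Dividing through by $2$ gives the claim.

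The only delicate step is the alignment of the sorted eigenvalues in the two dilations; this is the main obstacle, but it becomes transparent once one observes that both spectra have the same type $\{\pm\sigma_i,0^{m-k}\}$ and so the decreasing rearrangement of one pairs canonically with that of the other. Apart from this bookkeeping, the argument is pure dilation plus a citation of Hoffman--Wielandt, so no further technical machinery is required.
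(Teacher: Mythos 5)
The paper cites Mirsky's theorem and does not supply its own proof, so there is no in-text argument to compare against. Your proof is correct: the Jordan--Wielandt dilation $\widetilde A = \bigl(\begin{smallmatrix}0 & A\\ A^\top & 0\end{smallmatrix}\bigr)$ does have spectrum $\{\pm\sigma_i(A)\}_{i=1}^k \cup \{0\}^{m-k}$, the sorted spectra of $\widetilde A$ and $\widetilde B$ align positionwise exactly as you claim (because singular values are nonnegative, the decreasing list is always $\sigma_1\ge\cdots\ge\sigma_k\ge 0\ge\cdots\ge 0\ge -\sigma_k\ge\cdots\ge -\sigma_1$), the Frobenius norm doubles under the dilation, and Hoffman--Wielandt for symmetric matrices (with both spectra sorted in the same order, which is the tight permutation) then yields $2\sum_i(\sigma_i(A)-\sigma_i(B))^2\le 2\|E\|_{\mathrm F}^2$. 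This dilation reduction is in fact one of the standard routes to Mirsky's Frobenius-norm singular-value perturbation bound, so your argument both proves the lemma and matches the usual textbook presentation; the only thing one might add for completeness is a one-line justification that in Hoffman--Wielandt the identity permutation is optimal once both eigenvalue lists are sorted decreasingly (a rearrangement/doubly-stochastic Birkhoff argument), but this is a well-known corollary and not a gap in your reasoning.
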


\section{Deferred proofs of upper bounds}
\subsection{Proof of Proposition~\ref{prop:riskred}}
It holds that
$$
\begin{aligned}
& \kl\left(P_{X_{n+1} | X^n} \| \widetilde{Q}_{X_{n+1} | X^n} | P_{X^n}\right) \\
& \stepa{=} \mathbb{E}\left[\kl\left(P_{X_{n+1} | X^n}\left(\cdot | X^n\right) \bigg\| \frac{1}{n} \sum_{t=1}^n Q_{X_{t+1} | X^t}\left(\cdot | X_{n-t+1}^n\right)\right)\right] \\
& \stepb{\le} \frac{1}{n} \sum_{t=1}^n \mathbb{E}\left[\log \frac{P_{X_{n+1} | X^n}\left(X_{n+1} | X^n\right)}{Q_{X_{t+1} | X^t}\left(X_{n+1} | X_{n-t+1}^n\right)}\right] \\
&= \frac{1}{n} \sum_{t=1}^n \mathbb{E}\left[\log \frac{P_{X_{n+1} | X_{n-t+1}^n}\left(X_{n+1} | X_{n-t+1}^n\right)}{Q_{X_{t+1} | X^t}\left(X_{n+1} | X_{n-t+1}^n\right)}\right]+\mathbb{E}\left[\log \frac{P_{X_{n+1} | X^n}\left(X_{n+1} | X^n\right)}{P_{X_{n+1} | X_{n-t+1}^n}\left(X_{n+1} | X_{n-t+1}^n\right)}\right] \\
&\stepc{=} \frac{1}{n} \sum_{t=1}^n \underbrace{\mathbb{E}\left[\log \frac{P_{X_{t+1} | X^t}\left(X_{t+1} | X^t\right)}{Q_{X_{t+1} | X^t}\left(X_{t+1} | X^t\right)}\right]}_{\kl\left(P_{X_{t+1} | X^t} \| Q_{X_{t+1} | X^t} | P_{X^t}\right)}+I\left(X_{n+1} ; X^{n-t} | X_{n-t+1}^n\right) \\
&\stepd{=} \frac{1}{n}\left(\kl\left(P_{X^{n+1}} \| Q_{X^{n+1}}\right)-\kl\left(P_{X_1} \| Q_{X_1}\right)+\sum_{t=1}^n I\left(X_{n+1} ; X^{n-t} | X_{n-t+1}^n\right)\right)
\end{aligned}
$$
where (a) applies the definition \eqref{eq:Q1}; (b) is due to the convexity of the KL divergence; (c) uses the crucial fact that due to stationarity, $\left(X_{n-t+1}, \ldots, X_{n+1}\right) \stackrel{\text { law }}{=}\left(X_1, \ldots, X_{t+1}\right)$ and $P_{X_{n+1} | X_{n-t+1}^n}=$ $P_{X_{t+1} | X^t}$ for all $t$; (d) applies the chain rule of KL divergence. Using $\kl\left(P_{X_1} \| Q_{X_1}\right)\ge 0$ and taking the supremum over $P_{X^{n+1}}$, the proposition follows.  \qed 

\subsection{Proof of Proposition~\ref{prop: decay info}}
 By the hidden Markov structure, $X^{n-t} \to (X_{n-t+1}^n,  Z_{n-t+1}) \to X_{n+1}$ forms a Markov chain, or equivalently, 
 $X^{n-t} \to  Z_{n-t+1} \to X_{n+1}$ conditioned on $X_{n-t+1}^n$. Thus data processing inequality yields
$$I(X^{n-t}; X_{n+1}|X_{n-t+1}^n) \leq I(Z_{n-t+1}; X_{n+1}|X_{n-t+1}^n).$$
By stationarity, we have  $I(Z_{n-t+1}; X_{n+1}|X_{n-t+1}^n)=I(Z_1; X_{t+1}|X_1^t)$ and, furthermore
$$
   \sum_{t=1}^n I(X^{n-t}; X_{n+1}|X_{n-t+1}^n) \leq \sum_{t=1}^n I(Z_1; X_{t+1}|X_1^t)= I(Z_1; X^{n+1}),$$
   by the chain rule of mutual information. \qed 

   

Furthermore, since $(Z_1, X_2)\to Z_2\to X_{3}^{n+1}$ is a Markov chain,
     one has that:
     $$H(X_{n+1}|Z_1, X_2^n)\geq H(X_{n+1}|Z_2, X_3^n)=H(X_n|Z_1, X_2^{n-1})$$
     where the last equality follows from stationarity. Moreover, clearly $$H(X_{n+1}|X_1^n)\leq H(X_{n+1}|X_2^n)=H(X_n|X_1^{n-1}).$$
     Taking the difference of the two equations, one has that:
     $I(Z_1; X_{n+1}|X_1^n)\leq  I(Z_1; X_{n}|X_1^{n-1}).$ Therefore applying the chain rule $\sum_{t=1}^n I(Z_1; X_{t+1}|X_1^t)= I(Z_1; X^{n+1})$ and 
     $I(Z_1; X^{n+1})\leq H(Z_1)\leq  \log k$, we obtain
     \begin{equation}\label{eq: decay info}
         I(Z_1; X_{n+1}|X_1^n)\leq \frac{1}{n} I(Z_1; X^{n+1})\leq\frac{1}{n} \log k.
     \end{equation}
\subsection{Proof of Proposition~\ref{prop: red upper bound}}
The following result on Markov estimators was proven in \cite[Lemma 7]{han2021optimal}:  
\begin{lemma}[Markov redundancy]\label{lem:markov_redundancy}
    For $n\ge 2k^2$, any initial distribution $\pi$, and Markov transition matrix $M$, the marginal distribution $Q_{Z^{n+1}}$ defined in \eqref{eq:Q_joint_distribution} and \eqref{eq: add-one M} satisfies
    \begin{equation*}
        \max_{z^{n+1}} \log \frac{\pi\left(z_1\right) \prod_{t=1}^{n} M\left(z_{t+1}|z_t\right)}{Q_{Z^{n+1}}\left(z^{n+1}\right)}\lesssim k^2\log\frac{n}{k^2}.
    \end{equation*}
\end{lemma}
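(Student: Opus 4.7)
The plan is to reduce the joint Markov redundancy to a sum of one-dimensional (per-row) add-one redundancies and then aggregate via concavity. Write
\begin{align*}
\log \frac{\pi(z_1)\prod_{t=1}^n M(z_{t+1}\smid z_t)}{Q_{Z^{n+1}}(z^{n+1})} = \log(k\pi(z_1)) + \sum_{t=1}^n\log\frac{M(z_{t+1}\smid z_t)}{M_t(z_{t+1}\smid z_t)},
\end{align*}
and bound the first term by $\log k$. The key observation is that the add-one estimator \eqref{eq: add-one M} for row $z$ depends only on transitions out of $z$; therefore the sum on the right splits by source state. Letting $N_z=\sum_{t=1}^n \one_{z_t=z}$ and $N_{z,z'}=\sum_{t=1}^n \one_{z_t=z,\,z_{t+1}=z'}$, a direct telescoping (grouping the visits to $z$ in order and checking that the denominator in \eqref{eq: add-one M} goes through $k,k+1,\ldots,k+N_z-1$, while the numerator contributes $1,2,\ldots,N_{z,z'}$ for each destination $z'$) gives
\begin{align*}
\prod_{t:\,z_t=z} M_t(z_{t+1}\smid z_t) \;=\; \frac{\prod_{z'\in[k]} N_{z,z'}!}{k^{\overline{N_z}}}, \qquad k^{\overline{m}}=k(k+1)\cdots(k+m-1).
\end{align*}

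Next I would upper-bound the true likelihood row by row. Maximizing $\prod_{z'} M(z'\smid z)^{N_{z,z'}}$ over probability vectors $M(\cdot\smid z)$ yields the MLE value $\prod_{z'}(N_{z,z'}/N_z)^{N_{z,z'}}$, and the multinomial inequality
\begin{align*}
\binom{N_z}{N_{z,1},\ldots,N_{z,k}}\prod_{z'}(N_{z,z'}/N_z)^{N_{z,z'}} \le 1
\end{align*}
(one term in a sum of probabilities) gives $\prod_{z'} M(z'\smid z)^{N_{z,z'}} \le \prod_{z'} N_{z,z'}!\,/\,N_z!$. Combining with the telescoped expression, the per-row contribution is bounded by the binomial
\begin{align*}
\frac{k^{\overline{N_z}}\prod_{z'} M(z'\smid z)^{N_{z,z'}}}{\prod_{z'} N_{z,z'}!} \;\le\; \frac{k^{\overline{N_z}}}{N_z!} \;=\; \binom{N_z+k-1}{k-1} \;\le\; \Bigl(\frac{e(N_z+k-1)}{k-1}\Bigr)^{k-1}.
\end{align*}

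Taking logs and summing over $z\in[k]$, then applying Jensen's inequality to $\log$, I get
\begin{align*}
\sum_{z\in[k]} \log\binom{N_z+k-1}{k-1} \;\le\; k(k-1)\log\frac{e\bigl(n/k + k-1\bigr)}{k-1},
\end{align*}
using $\sum_z N_z = n$. Under the hypothesis $n\ge 2k^2$ one has $n/k+k-1\le 2n/k$, and the right-hand side is $\lesssim k^2\log(n/k^2)$, swallowing the additive $\log k$ from the initial state. The calculation is uniform in $z^{n+1}$ and $M$, which gives the claimed pointwise bound. The main obstacle is purely bookkeeping: verifying the per-row telescoping (so that the visits to $z$ really produce a Polya-urn-like product independent of the interleaving with other rows) and then handling degenerate rows with $N_z=0$ (trivial, since both numerator and denominator equal $1$); no probabilistic estimate or mixing hypothesis is needed.
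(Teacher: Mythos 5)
Your proof is correct. Note that the paper does not actually prove this lemma in-house --- it is quoted from \cite[Lemma 7]{han2021optimal} --- so there is no internal proof to compare against; your argument is a valid self-contained substitute. It is also essentially the same computation the paper \emph{does} carry out for the emission term in the proof of Proposition~\ref{prop: red upper bound}: telescope the add-one estimator into $\prod_{z'}N_{z,z'}!/k^{\overline{N_z}}$ per row, control the true likelihood by the multinomial inequality to reduce to $k^{\overline{N_z}}/N_z!=\binom{N_z+k-1}{k-1}$, and aggregate the $k$ rows by concavity of $\log$ with $\sum_z N_z=n$; your final bookkeeping (using $n\ge 2k^2$ to absorb constants and the $\log k$ from the initial state) is fine.
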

Now for the joint distribution $Q_{X^{n+1},Z^{n+1}}$, 
    $$\log\frac{\pi(z_1)\prod_{t=1}^{n} M(z_{t+1}|z_t)\prod_{t=1}^{n+1} T(x_t|z_t)}{Q_{X^{n+1},Z^{n+1}}(x^{n+1},z^{n+1})}=\log\frac{\pi(z_1)\prod_{t=1}^{n} M(z_{t+1}|z_t)}{Q_{Z^{n+1}}(z^{n+1})}+\log\frac{\prod_{t=1}^{n+1} T(x_t|z_t)}{ Q_{X^{n+1}|Z^{n+1}}(x^{n+1}|z^{n+1})} .$$
    The upper bound of the first term is stated in Lemma \ref{lem:markov_redundancy}. For the second term, let $N_i$ be the number of appearances of $i$ in $z^{n+1}$, and $N_{ij}$ be the number of appearances of $(i, j)$ in the state-emission pairs $(z_t,x_t)_{t=1}^{n+1}$. Then \eqref{eq:Q_joint_distribution} and \eqref{eq: add-one T} imply that
    $$Q_{X^{n+1}|Z^{n+1}}(x^{n+1}|z^{n+1})=\prod_{i=1}^k\frac{\prod_{j=1}^\l N_{ij}!}{\l^{\overline{N_i}}},$$
    where $a^{\overline{m}}=a(a+1)\cdots(a+m-1)$ is the rising factorial. Therefore, 
    \begin{align*}
        \log\frac{\prod_{t=1}^{n+1} T(x_t|z_t)}{Q_{X^{n+1}|Z^{n+1}}(x^{n+1}|z^{n+1})} &= \log\prod_{i=1}^k \l^{\overline{N_i}} \prod_{j=1}^{\ell}\frac{ T(j|i)^{N_{ij}}}{N_{ij}! }\\
        &\stepa{\le} \sum_{i=1}^k\log\frac{\l^{\overline{N_i}}}{N_i!}\\
        &\stepb{\le} \sum_{i=1}^k\left((\l-1) \log \left(1+\frac{N_i}{\l-1}\right)+N_i \log \left(1+\frac{\l-1}{N_i}\right)\right)\\
        &\stepc{\lesssim} k\l\log\frac{n}{k\l}, 
    \end{align*}
where (a) follows from the multinomial theorem $\binom{N_i}{N_{i1}\ \cdots \ N_{ik}}\prod_{j=1}^{\ell} T(j|i)^{N_{ij}}\le 1$, (b) is due to
\begin{align*}
\log \frac{\ell^{\overline{m}}}{m!} = \sum_{i=1}^{m} \log\left(1+\frac{\ell-1}{i}\right) &\le \int_0^m \log\left(1+\frac{\ell-1}{x}\right)\mathrm{d}x \\
&= (\ell-1)\log\left(1+\frac{m}{\ell-1}\right) + m\log\left(1+\frac{\ell-1}{m}\right),
\end{align*}
and (c) uses the concavity of $x\to \log x$, $\log(1+x)\le x$, and $\sum_{i=1}^k N_i = n+1$. The above two terms then complete the proof of the first statement. The second statement (marginalization) simply follows from
\begin{align*}
\red(Q_{X^{n+1}}) &\le \max_{x^{n+1}} \log\frac{\sum_{z^{n+1}} \pi(z_1)\prod_{t=1}^{n} M(z_{t+1}|z_t)\prod_{t=1}^{n+1} T(x_t|z_t)}{\sum_{z^{n+1}} Q_{X^{n+1},Z^{n+1}}(x^{n+1},z^{n+1})} \\
&\le \max_{x^{n+1},z^{n+1}} \log\frac{\pi(z_1)\prod_{t=1}^{n} M(z_{t+1}|z_t)\prod_{t=1}^{n+1} T(x_t|z_t)}{Q_{X^{n+1},Z^{n+1}}(x^{n+1},z^{n+1})} \\
&\lesssim k^2\log\frac{n}{k^2} + k\ell \log\frac{n}{k\ell}. 
\end{align*} \qed 

\subsection{Proof of Corollary \ref{cor: any emission}}
\label{sec:any emission}
By Propositions \ref{prop:riskred} and \ref{prop: decay info}, it suffices to construct $Q_{X^{n+1}}$ such that
\begin{align*}
\red(Q_{X^{n+1}}; \PHMM_n(k,\calQ)) \lesssim k^2\log\frac{n}{k^2} + kR({(n+1)/k}). 
\end{align*}
To this end, let $Q_{X^{n+1}}$ be the marginal distribution of $Q_{X^{n+1},Z^{n+1}}$, where $Q_{Z^{n+1}}(z^{n+1})$ is again given by \eqref{eq:Q_joint_distribution} and \eqref{eq: add-one M}. For the conditional distribution $Q_{X^{n+1}|Z^{n+1}}(x^{n+1}|z^{n+1})$, let $I_z(z^{n+1}) = \{t\in [n+1]: z_t = z\}$ be the time indices where $z_t = z$, we construct
\begin{align*}
Q_{X^{n+1}|Z^{n+1}}(x^{n+1}|z^{n+1}) = \prod_{z\in [k]} Q_{|I_z(z^{n+1})|}^\star(x_{I_z(z^{n+1})}), 
\end{align*}
where $Q^\star_m$ is the joint distribution such that $\red(Q^\star_m; \calQ^{\otimes m})\le R(m)$. Then for any HMM in $\PHMM_n(k,\calQ)$ with transition matrix $M$, stationary distribution $\pi$, and emission distributions $(Q_z)_{z\in [k]}\subseteq \calQ$, one has
\begin{align*}
&\log \frac{\pi(z_1)\prod_{t=1}^n M(z_{t+1}|z_t) \prod_{t=1}^{n+1} Q_{z_t}(x_t) }{Q_{X^{n+1},Z^{n+1}}(x^{n+1},z^{n+1})} \\
&= \log \frac{\pi(z_1)\prod_{t=1}^n M(z_{t+1}|z_t) }{Q_{Z^{n+1}}(z^{n+1})} + \sum_{z\in [k]} \log \frac{\prod_{t\in I_z(z^{n+1})} Q_{z}(x_t) }{Q_{|I_z(z^{n+1})|}^\star(x_{I_z(z^{n+1})})}. 
\end{align*}
Taking expectation over $x^{n+1}$ conditioned on $z^{n+1}$ gives that
\begin{align*}
\E\left[ \log \frac{\prod_{t\in I_z(z^{n+1})} Q_{z}(x_t) }{Q_{|I_z(z^{n+1})|}^\star(x_{I_z(z^{n+1})})} \right] \le \red(Q^\star_{|I_z(z^{n+1})|}; \calQ^{\otimes |I_z(z^{n+1})|})\le R(|I_z(z^{n+1})|), 
\end{align*}
so the concavity of $R$ leads to
\begin{align*}
\E\left[ \sum_{z\in [k]}\log \frac{\prod_{t\in I_z(z^{n+1})} Q_{z}(x_t) }{Q_{|I_z(z^{n+1})|}(x_{I_z(z^{n+1})})} \right] \le \sum_{z\in [k]} R({|I_z(z^{n+1})|}) \le kR({(n+1)/k}). 
\end{align*}
A combination of the above inequality and Lemma \ref{lem:markov_redundancy} completes the proof of the desired redundancy upper bound. 

\begin{remark}    
As discussed after \prettyref{cor: any emission}, for Gaussian location model $\calQ=\{\N(w, I_d): w\in [-1, 1]^d\}$, one has\footnote{For sharp asymptotic bounds of both average and pointwise redundancy in Gaussian models, see \cite{xie2000asymptotic}.} $\Red(\calQ^{\otimes n})\lesssim d\log n$ and thus 
$\Red(\PHMM_n(k,\calQ))\lesssim k^2 \log\frac{n}{k^2} +  d\log n$.
This is also implied by the pointwise redundancy bound of HMM with Gaussian emissions in \cite[Sec.~4.2.3]{gassiat2018universal}.
\end{remark}

\section{Deferred proofs of lower bounds}\label{appdx: lower HMM}

\subsection{Improved redundancy-based risk lower bound}
\label{sec:riskred-improved}

In this section we prove \prettyref{prop:risk_lb_general} which improves over \prettyref{prop:risk_lower_bound}. 
Let $\calQ$ be an arbitrary collection of distributions on the observation space $\calX$. Let $\PHMM_{n,\mathsf{U}}(k,\calQ)$ denote the class of all stationary HMMs of length $n$ with hidden states in $[k]$, emissions in $\calX$ with emission probabilities chosen in $\calQ$. In addition, let $\PHMM_{n,\mathsf{U}}(k,\calQ)$ denote the subclass of HMMs whose stationary distribution over the states is uniform.
Let $\red(\calQ)$ denote the redundancy of the distribution class $\calQ$ in the same sense as \prettyref{eq:red}, namely,
\[
\red(\calQ) \triangleq \inf_{Q'}\sup_{Q\in \calQ} \kl(Q\|Q')
\]
which, by the capacity-redundancy theorem (see, e.g., \cite[(13.10)]{PW-it}), equals
\begin{equation}
\red(\calQ) = \sup I(\theta;X)
\label{eq:red-cap}
\end{equation}
where $P_{X|\theta}\in\calQ$ and the maximization is taken over the distribution (prior) of $\theta$.
\prettyref{prop:risk_lb_general} lower bounds the prediction risk of 
$\PHMM_{n}(k,\calQ)$ using the redundancy of the (slightly smaller) class $\PHMM_{n,\mathsf{U}}(k,\calQ)$, which is the set of all stationary HMMs in $\PHMM_{n}(k,\calQ)$ with uniform stationary distribution for the hidden states.  

\vspace{6pt}
\begin{proof}[Proof of Proposition~\ref{prop:risk_lb_general}] Given a HMM configuration $Q$ 
(comprising the transition probability $Q_{X_2|X_1}$ and emission probability $Q_{X_1|Z_1}$) in $\PHMM_{n+1,\mathsf{U}}(k-1,\calQ)$, we construct an HMM $P$ in $\PHMM_n(k,\calQ)$ as follows.
 in a similar way as that in 
 Section \ref{sec:reduction}.
The emission probabilities for states  $i\in[k-1]$ are identical, namely $P_{X_1|Z_1=i}=Q_{X_1|Z_1=i}$ and 
$P_{X_1|Z_1=k}$ is any fixed distribution in $\calQ$.
The transition probabilities 
$P_{X_2|X_1}$ is defined based on 
$Q_{X_2|X_1}$ using \prettyref{eq:reduction_transition}.
Hence, both HMM configurations $P$ and $Q$ are parameterized  by the transition matrix $Q_{X_2|X_1}$ and the emission probability matrix $Q_{X_1|Z_1}$. Let $\theta$ denote the collection of these parameters.


Let $T\in \{1,2,\cdots,n, \bot\}$ be the smallest $t\in [n]$ such that $Z_t\neq k$ (if $Z^n = k^n$ then $T = \bot$). In the later proof we will condition on $T$; a subtlety here is that $T$ is determined by the states $Z^n$ but it may not be measurable with respect to $X^n$. Nevertheless, 
we will consider the setting where both the estimand  $P_{X_{n+1}|X^n}$ and the estimator $\widehat{P}_{X_{n+1}|X^n}$ have access to the extra information $T$. While 
replacing 
$\widehat{P}_{X_{n+1}|X^n}$  by $\widehat{P}_{X_{n+1}|X^n,T}$ 
is valid for the sake of lower bound, replacing 
$P_{X_{n+1}|X^n}$ by $P_{X_{n+1}|X^n,T}$ requires further justification. By Lemma~\ref{lem: partial_reveal}, this increases the prediction risk by at most $I(T; X_{n+1}|X^n)$ which we bound below.

\begin{lemma}\label{lem:remainder}
For any fixed $\theta$, we have
\begin{align*}
I(T; X_{n+1} | X^n) \lesssim \frac{\log(kn)}{n}. 
\end{align*}
\end{lemma}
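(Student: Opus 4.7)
The plan is to bound $I(T; X_{n+1}|X^n)$ via a suitable enlargement of the hidden state that records the first exit time from the lazy state $k$. Extend the HMM to a two-sided stationary process $(Z_t)_{t \in \mathbb{Z}}$ and, for each $t$, define the enlarged state
\[
V_t := (\hat{T}^{(t)} - t,\, Z_t), \qquad \hat{T}^{(t)} := \inf\{s \ge t : Z_s \neq k\}.
\]
Since $V_1 = (\hat{T}^{(1)} - 1,\, Z_1)$ determines $\hat{T}^{(1)}$, which in turn determines $T$ (via $T = \hat{T}^{(1)}$ if $\hat{T}^{(1)} \le n$ and $T = \bot$ otherwise), the data-processing inequality gives $I(T; X_{n+1}|X^n) \le I(V_1; X_{n+1}|X^n)$.

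I would then verify --- by a case analysis on whether $Z_t = k$, using the strong Markov property of $Z$ to decouple past and future given $V_t$ --- that $(V_t)_{t \in \mathbb{Z}}$ is a stationary Markov chain and that $(V_t, X_t)$ forms a stationary HMM, with $X_t$ depending on $V_t$ only through the coordinate $Z_t$. In particular, $(V_1, X_2) \to V_2 \to X_3^{n+1}$ is a Markov chain.

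The main step then mirrors the monotonicity argument following the proof of \prettyref{prop: decay info}, applied to $V_1$ in place of $Z_1$. Using the Markov structure above and stationarity of $(V_t, X_t)$, the same calculation shows that $t \mapsto I(V_1; X_{t+1}|X^t)$ is non-increasing in $t$, so the chain rule yields
\[
(n+1)\, I(V_1; X_{n+1}|X^n) \le \sum_{t=0}^n I(V_1; X_{t+1}|X^t) = I(V_1; X^{n+1}) \le H(V_1).
\]
A direct computation under the HMM $P$ constructed in the proof of \prettyref{prop:risk_lb_general} gives $H(V_1) \le H(Z_1) + H(\hat{T}^{(1)} - 1 \mid Z_1) \lesssim \log k + \log n$: indeed $H(Z_1) \le \log k$, and conditional on $Z_1 = k$ (which has probability $1/2$) the waiting time $\hat{T}^{(1)} - 1$ is geometric with parameter $1/n$ and has entropy $\lesssim \log n$, while $\hat{T}^{(1)} = 1$ deterministically when $Z_1 \ne k$. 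Combining these bounds yields the claim.

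The main technical obstacle will be carefully checking the Markov property and stationarity of $(V_t)_{t \in \mathbb{Z}}$: because $\hat{T}^{(t)}$ is an infinite-horizon functional of the future $Z$-trajectory, some care is required, but the verification ultimately reduces to the case split on whether $Z_t = k$ together with the strong Markov property, after which the first-passage structure decouples past from future given $V_t$.
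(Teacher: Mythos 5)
Your proof is correct, and it takes a genuinely different route from the paper's. The paper expands $T$ through the indicators $U_t = \one_{T=t}$, applies the chain rule of mutual information over $U^n$, observes that $U_t$ is degenerate unless $U^{t-1} = 0^{t-1}$ (equivalently $Z^{t-1} = k^{t-1}$), and then uses stationarity to telescope, ending with only two boundary terms: $I(U_1;X_{n+1}|X^n)\le H(Z_1)/n$ and $I(U_2;X_3^{n+1}|X_2,Z_1=k)\le H(U_2|Z_1=k)=h(1/n)$. You instead augment the hidden state by the residual sojourn time in the lazy state, $V_t=(\hat T^{(t)}-t,Z_t)$, check that $(V_t,X_t)$ is still a stationary HMM (now on a countable state space), observe that $T$ is a deterministic function of $V_1$, and invoke the generic memory bound $I(V_1;X_{n+1}|X^n)\le H(V_1)/(n+1)$ — i.e., \prettyref{eq: decay info} for the augmented chain. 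The one point that genuinely requires care, which you flag, is verifying that conditioning on $V_t=(Z_t,\hat T^{(t)})$ still decouples past from future: since $\hat T^{(t)}$ is $\sigma((Z_s)_{s\ge t})$-measurable, conditioning on it in addition to $Z_t$ is conditioning on a function of the future given $Z_t$, which preserves the conditional independence $Z^{t-1}\indep (Z_s)_{s>t}$; from this one gets both the Markov property of $(V_t)$ and the key relation $(V_1,X_2)\to V_2\to X_3^{n+1}$. The payoff is that the final entropy computation $H(V_1)\lesssim\log k+\log n$ (a geometric sojourn with parameter $1/n$) is immediate, and the scheme generalizes cleanly: any side information that is a finite-entropy functional of the hidden trajectory and embeds in a state-augmented stationary HMM can be absorbed by the same one-line bound, which is conceptually tidier than the chain-rule-and-telescope bookkeeping of the paper's proof.
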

\begin{proof}
Let $U_t = \one_{T=t}$ for $t\in [n]$, then $T$ is determined by $U^n$. Then
\begin{align*}
I(T; X_{n+1}|X^n) &\le I(U^n; X_{n+1}|X^n) \\
&= \sum_{t=1}^n I(U_t; X_{n+1}|X^n, U^{t-1}) \\
&\stepa{\le} I(U_1; X_{n+1}|X^n) + \sum_{t=2}^n I(U_t; X_{n+1}|X^{n}, U^{t-1}=0^{t-1}) \\
&= I(U_1; X_{n+1}|X^n) +\sum_{t=2}^n I(U_t; X_{n+1}|X^{n}, Z^{t-1}=k^{t-1}) \\
&\stepb{=} I(U_1; X_{n+1}|X^n) +\sum_{t=2}^n I(U_t; X_{n+1}|X_t^{n}, Z_{t-1}=k) \\
&\stepc{=} I(U_1; X_{n+1}|X^n) +\sum_{t=2}^n I(U_2; X_{n+3-t} | X_2^{n+2-t}, Z_1 = k) \\
&=  I(U_1; X_{n+1}|X^n) + I(U_2; X_3^{n+1} | X_2, Z_1 = k) \\
&\stepd{\le} \frac{H(Z_1)}{n} + H(U_2 | Z_1 = k) \stepe{\lesssim}\frac{\log(nk)}{n}, 
\end{align*}
where (a) uses the fact that $U_t = 0$ deterministically if any entry of $U^{t-1}$ is one so that 
$$I(U_t; X_{n+1}|X^n, U^{t-1})=
I(U_t; X_{n+1}|X^n, U^{t-1}=0^{t-1})
\P(U^{t-1}=0^{t-1} | X^n);$$ 
(b) is due to the Markov structure $(X^{t-1}, Z^{t-2})\to (X_t^m, Z_{t-1})\to U_t$ given $Z^{t-1} = k^{t-1}$ for $m\in \{n, n+1\}$;
(c) is due to stationarity, and (d) follows from $I(U_1; X_{n+1}|X^n)\leq I(Z_1; X_{n+1}|X^n)\leq_{\eqref{eq: decay info}}{H(Z_1)}/{n}$;
(e) is because 
by definition \prettyref{eq:reduction_transition},
conditioned on $Z_1=k$, $U_2 \sim \Bern(\frac{1}{n})$.
\end{proof}

Next we assume an arbitrary prior on $\theta$ and use this Bayesian setting to lower bound the prediction risk.
Similar to the analysis in Section \ref{sec:reduction}, it is clear that for every $t\in [n]$, it holds that $\P(T=t) = \Omega(1/n)$. Conditioned on the event $T=t$, the random distribution $P_{X_{n+1}|X^n, \theta, T=t}$ shares the same law as $P_{Y_{n-t+2}|Y^{n-t+1}, \theta}$, where $Y^m$ is a sample path of length $m$ from the same $k$-state HMM, but with the initial hidden state $Z_1$ drawn uniformly from $[k-1]$. Similarly, conditioned on $T=t$, the posterior distribution of $\theta$ given $X^n$ has the same law as the posterior distribution of $\theta$ given $Y^{n-t+1}$. Consequently, the Bayes prediction risk satisfies
\begin{align*}
&\inf_{\widehat{P}_{X_{n+1}|X^n, T}} \E[\kl(P_{X_{n+1}|X^n,\theta,T}\|\widehat{P}_{X_{n+1}|X^n, T})] \\
&\ge \sum_{t=1}^n \P(T=t) \cdot \inf_{\widehat{P}_{X_{n+1}|X^n, T}} \E[\kl(P_{X_{n+1}|X^n,\theta,T}\|\widehat{P}_{X_{n+1}|X^n, T}) | T = t] \\
&= \sum_{t=1}^n \P(T=t) \cdot \inf_{\widehat{P}_{Y_{n+2-t}|Y^{n+1-t}} } \E[\kl(P_{Y_{n+2-t}|Y^{n+1-t}, \theta}\|\widehat{P}_{Y_{n+2-t}|Y^{n+1-t}})] \\
&\gtrsim \frac{1}{n} \sum_{t=1}^n I(\theta; Y_{n+2-t}|Y^{n+1-t}) = \frac{I(\theta; Y^{n+1}) - I(\theta;Y_1)}{n}. 
\end{align*}

To deal with the above terms, for the second mutual information we have
\begin{align*}
I(\theta; Y_1) \le I(\theta; Y_1, Z_1) = I(\theta; Y_1 | Z_1) + I(\theta; Z_1) \stepa{=} I(\theta; Y_1 | Z_1) \stepb{\le} \red(\calQ), 
\end{align*}
where (a) uses that the distribution of $Z_1$ is uniform regardless of $\theta$ by definition of the model class $\PHMM_{n+1,\mathsf{U}}$, and (b) is because for any state $i\in[k]$, $I(\theta; Y_1 | Z_1=i) \leq \Red(\calQ)$ by the capacity-redundancy representation \prettyref{eq:red-cap}, since the emission probabilities $P_{ Y_1 | Z_1=i,\theta}$ are chosen from $\calQ$.

For the first mutual information $I(\theta;Y^{n+1})$, let $(Y^{n+1}, Z^{n+1})$ be the observations and hidden states in the HMM with $k$ states (starting from $Z_1\sim \mathsf{Unif}([k-1])$), and $X_Q^{n+1}$ be the observations in the reduced HMM (with $k-1$ states and model parameters $Q$). Let $E$ be the event that $Z_t\neq k$ for all $t\in [n+1]$, by chain rule 
$I(\theta; Y^{n+1}) = I(\theta,\one_E;Y^{n+1})- I(\one_E;
Y^{n+1}|\theta)$, we have
\begin{align*}
I(\theta; Y^{n+1}) \ge I(\theta; Y^{n+1} | \one_E) - \log 2 \ge \P(E) I(\theta; Y^{n+1} | E) - \log 2. 
\end{align*}
We note that $\P(E) = (1-1/n)^n = \Omega(1)$, and the joint distribution $P_{\theta, Y^{n+1}|E}$ is the same as $P_{\theta, X_Q^{n+1}}$. Therefore, 
\begin{align*}
I(\theta; Y^{n+1}) \gtrsim I(\theta; X_Q^{n+1}) - \log 2.
\end{align*}
Using Lemmas \ref{lem: partial_reveal}, \ref{lem:remainder},
as well as the above inequalities yields a lower bound on the Bayes (and hence minimax) prediction risk:
\begin{align*}
\risk(\PHMM_n(k,\calQ)) \gtrsim \frac{1}{n}I(\theta; X_Q^{n+1}) - \frac{\log(nk) + \red(\calQ)}{n}.
\end{align*}
Finally, taking the supremum over the prior of the model parameter $\theta$ and invoking the capacity-redundancy identity \prettyref{eq:red-cap} complete the proof of Proposition \ref{prop:risk_lb_general}.
\end{proof}
\subsection{The case of $\ell=2$ and proof of Corollary~\ref{cor:general_emission_lb}}

When $\ell = 2$, we distinguish into two cases: $k=2$, and $k\ge C$ for a large absolute constant $C>0$. In the first case we establish an $\Omega(\log n/n)$ lower bound for the prediction risk of $\PHMM_n(2,2)$, and in the second case we prove Corollary \ref{cor:general_emission_lb} when $n\ge k^D$, which implies the lower bound in Theorem \ref{thm: stats bounds} for $\RiskHMM(n,k,2)$. 

\subsubsection{$k=2$}
Let us first consider the case $k=\l=2$. Note that we cannot directly apply Proposition~\ref{prop:risk_lb_general} as the remainder term $\log(nk)/n$ is too large. Instead, similar to the general reduction in Section \ref{sec:reduction}, we consider a specific transition matrix for the hidden states: 
 \begin{align*}
 M = \begin{bmatrix}
        \frac{n-1}{n}&\frac{1}{n} \\
         \frac{1}{n}&\frac{n-1}{n}     \end{bmatrix}. 
 \end{align*}
 For the emission probabilities, we set $\P(X=1|Z=0) = 1$ and $\P(X=1|Z=1) = \theta$, where the prior distribution of $\theta$ is $\mathsf{Unif}([0.1,0.9])$. We show that the Bayes prediction risk is $\Omega(\log n/n)$. 

 For $t\in [n-1]$, let $E_t$ be the event $Z^t = 0^t, Z_{t+1}^n = 1$ and $X_n = 0$. Clearly
 \begin{align*}
 \P(E_t) \ge \frac{1}{2}\cdot \left(1-\frac{1}{n}\right)^{t-1}\cdot \frac{1}{n}\cdot \left(1-\frac{1}{n}\right)^{n-t-1}\cdot (1-0.9) \ge \frac{1}{60n}.
 \end{align*}
 We investigate the Bayes prediction risk conditioned on $E_t$. For the true distribution $P_{X_{n+1}|X^n, \theta}$, the event $E_t$ implies $X_n =0$ (observable from $X^n$) and consequently $Z_n = 1$, so that $P_{X_{n+1}|X^n, \theta} = \mathsf{Bern}(\frac{n-1}{n}\theta+\frac{1}{n})$. For the estimator $Q_{X_{n+1}|X^n}$, the posterior distribution of $\theta$ given $(X^n, E_t)$ is the same as the posterior distribution of $\theta$ given $Y^{n-t}\sim \mathsf{Bern}(\theta)^{\otimes(n-t)}$. Consequently,
 \begin{align*}
 &\inf_{Q(\cdot|X^n)} \E\left[\log\frac{P(X_{n+1}|X^n, \theta)}{Q(X_{n+1}|X^n)}\bigg| E_t\right] \\
 &\ge \inf_{Q(\cdot|X^n, E_t)} \E\left[\log\frac{P(X_{n+1}|X^n, \theta)}{Q(X_{n+1}|X^n, E_t)}\bigg| E_t\right] \\
 &\ge \inf_{\widehat{p}(X^n, E_t)} \E \left[\kl\left(\mathsf{Bern}\left(\frac{n-1}{n}\theta+\frac{1}{n}\right) \| \mathsf{Bern}(\widehat{p}(X^n, E_t))\right)\bigg|E_t\right] \\
 &= \inf_{\widehat{p}(Y^{n-t})} \E \left[\kl\left(\mathsf{Bern}\left(\frac{n-1}{n}\theta+\frac{1}{n}\right) \| \mathsf{Bern}(\widehat{p}(Y^{n-t}))\right)\right] \\
 &\ge 2\left(\frac{n-1}{n}\right)^2 \inf_{\widehat{\theta}(Y^{n-t})} \E \left[(\theta-\widehat{\theta}(Y^{n-t}))^2\right] \gtrsim \frac{1}{n-t}, 
 \end{align*}
 where we have used Pinsker's inequality and the $\Omega(1/(n-t))$ Bayes mean squared error of estimating $\theta$ under $Y^{n-t}\sim \mathsf{Bern}(\theta)^{\otimes(n-t)}$. Therefore, the overall Bayes prediction risk is
 \begin{align*}
 \inf_{Q(\cdot|X^n)} \E\left[\log \frac{P(X_{n+1}|X^n,\theta)}{Q(X_{n+1}|X^n)}\right] &\ge \sum_{t=1}^{n-1} \P(E_t)\cdot \inf_{Q(\cdot|X^n)} \E\left[\log\frac{P(X_{n+1}|X^n, \theta)}{Q(X_{n+1}|X^n)}\bigg| E_t\right] \\
 &\gtrsim \sum_{t=1}^{n-1} \frac{1}{n}\cdot \frac{1}{n-t} \gtrsim \frac{\log n}{n}, 
 \end{align*}
 completing the proof of $\RiskHMM(n,2,2)=\Omega(\log n/n)$. 

\subsubsection{Corollary~\ref{cor:general_emission_lb}: $k\ge C$ and $n\ge k^D$}
Since Corollary~\ref{cor:general_emission_lb} implies the lower bound of \Cref{thm: stats bounds} in this case, it suffices to prove Corollary~\ref{cor:general_emission_lb}. In fact, by Proposition~\ref{prop:risk_lb_general}, the final step in proving Corollary~\ref{cor:general_emission_lb} is to lower bound the redundancy. This can be divided into two steps:
\begin{enumerate}
    \item That $\red(\PHMM_{n,\mathsf{U}}(k,\calQ))\gtrsim k^2\log n$. This follows immediately from Lemma~\ref{prop: redundancy lower bound} and the construction therein (Lemma~\ref{lem: high ent construction}). Indeed, let $Q$ the transition matrix be sampled according to Lemma~\ref{lem: high ent construction} and plug in $t_1<t_2$ in Lemma~\ref{prop: redundancy lower bound} be according to $c_1<c_2$ in Corollary~\ref{cor:general_emission_lb}. Let $f$ be the map pushing $Q\in \calQ$ to Bernoulli's in Corollary~\ref{cor:general_emission_lb}. Then one has that:
    $$I(X_{\calQ}^n; \theta)\geq I(f(X_\calQ^n); \theta)\gtrsim k^2\log n.$$
    \item That $\red(\PHMM_{n,\mathsf{U}}(k,\calQ))\gtrsim k\red(\calQ^{\otimes\lfloor n/k\rfloor})-\log k$, where we ignore the $k\to k-1$ issue since $\red(\calQ^t)$ grows at most linearly in $t$. To show this, consider the HMM whose latent states evolve according to the cycle $\T_k,$ and each emission $P_i=\P(X|Z=i)$ is sampled independently from some distribution $\mu$ supported on $\calQ$. Then:
    $$\sup_\mu I(P^k; X^n)\geq \sup_\mu I(P^k; X^n,Z_1)-\log k=k\red(\calQ^{\otimes\lfloor n/k\rfloor})-\log k$$ by optimizing the prior $\mu$. Therefore Corollary~\ref{cor:general_emission_lb} is proven.
\end{enumerate}
\begin{remark}
    In fact, the risk from emission $\frac{k}{n}\red(\calQ^{\otimes \lfloor n/k\rfloor})$ holds even for $k=2$ and $n\in O(1)$. The constraint on $k\geq C, n\geq k^D$ is only used to show $\risk\gtrsim \frac{k^2}{n}\log n$ via Lemma~\ref{prop: redundancy lower bound}.
\end{remark}

\subsection{Proof of Lemma \ref{prop: redundancy lower bound}}
The proof of Lemma \ref{prop: redundancy lower bound} relies critically on the following Markov property:
\begin{align*}
P_{X_{-L}^L}(x_{-L}^L) &= \sum_{z_0 \in [k]} P_{Z_0}(z_0) P_{X_{-L}^L|Z_0}(x_{-L}^L |z_0) \\
&= \sum_{z_0 \in [k]} P_{Z_0}(z_0) P_{X_0|Z_0}(x_0 | z_0) P_{X_1^L|Z_0}(x_1^L | z_0) P_{X_{-L}^{-1}|Z_0}(x_{-L}^{-1} | z_0) \\
&= \frac{1}{k}\sum_{z_0 \in [k]} P_{X_0|Z_0}(x_0 | z_0) P_{X_1^L|Z_0}(x_1^L | z_0) P_{X_{-L}^{-1}|Z_0}(x_{-L}^{-1} | z_0), 
\end{align*}
where the last step uses the fact that the stationary distribution is uniform when the transition matrix is doubly stochastic. In a tensor form, we write $P_{X_{-L}^L}$ as a $2^L\times 2^L\times 2$ tensor $M$, and express  $P_{X_0|Z_0=z_0}, P_{X_1^L|Z_0=z_0}, P_{X_{-L}^{-1}|Z_0=z_0}$ as vectors $o_{z_0}\in \mathbb{R}^2, e_{z_0}\in \mathbb{R}^{2^L}, f_{z_0}\in \mathbb{R}^{2^L}$, respectively. Then we have the following tensor decomposition of the moment matrix (\cite{huang2015minimal}): 
\begin{align}\label{eq:tensor_decomposition}
    M = \frac{1}{k}\sum_{z_0 \in [k]} e_{z_0}\otimes f_{z_0}\otimes o_{z_0}. 
\end{align}
We also let $E, F\in\mathbb{R}^{2^L\times k}$ be the matrices with column vectors $e_{z_0}$ and $f_{z_0}$, respectively. 

Based on the above tensor decomposition, the proof consists of several steps:
\begin{enumerate}
    \item In the first step, we show that there exist constants $C_1,C_2,d_1,d_2,d_3>0$ such that for $L = \lceil d_1\log k\rceil$, based on the HMM trajectory $X^n$ one may construct an estimator $\widehat{M}_n$ such that
    \begin{align}\label{eq:first_step}
    \P(\| \widehat{M}_n - M \|_{\mathrm{F}} \ge n^{-d_2}) \le C_1\exp(-C_2n^{d_3}). 
    \end{align}
    \item In the second step, we show that based on the estimate $\widehat{M}_n$ and tensor decomposition, one may construct estimators $\widehat{E}_n, \widehat{F}_n$ for matrices $E,F$ such that (up to permutations of columns)
    \begin{align}\label{eq:second_step}
    \P( \| \widehat{E}_n - E \|_{\mathrm{F}} + \| \widehat{F}_n - F \|_{\mathrm{F}} \ge n^{-d_4} ) \le 0.01, 
    \end{align}
    where $d_4>0$ is an absolute constant; 
    \item In the last step, we conclude the statement of Lemma \ref{prop: redundancy lower bound} based on \eqref{eq:second_step}. 
\end{enumerate}
We break these steps into several subsections. 

\subsubsection{First step: proof of \eqref{eq:first_step}}
Note that if the transition matrix is precisely the cycle $\operatorname{C}_k$, then $X^{2L+1}$ and $X_{T}^{T+2L}$ are independent if we choose a random time index $T\sim \mathsf{Unif}(\{2L+2, \cdots, 2L+k+1\})$. The following lemma states that this is essentially the case whenever the transition matrix $Q$ is close to $\operatorname{C}_k$. 

 \begin{lemma}\label{lem: sampling windows}
 Consider a Markov chain $(Z_t)$ with transition matrix $Q\in\mathbb{R}^{k\times k}$ such that $\|Q-\operatorname{C}_k\|_{\max}\leq \eps$, and that the stationary distribution is uniform. Then for any $m\in \naturals$, there exists a distribution $P_m$ supported on $[mk]$ such that if $T\sim P_m$ (independent of the chain), then
     $$\max_{z\in [k]}\| \mathsf{Unif}([k]) - P_{Z_T|Z_0=z}\|_{\mathrm{TV}} \le (2k\eps)^m.$$
 \end{lemma}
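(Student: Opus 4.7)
The plan is to build $P_m$ as the $m$-fold convolution of a well-chosen base distribution $P_1$ on $[k]$, and then propagate the bound through a bilinear contraction in total variation that arises from the doubly stochastic structure of $Q$. Writing $\mu_z^{(t)}(z') := Q^t(z,z')$ and $u = \mathsf{Unif}([k])$, the Markov property for any two independent sampling times $T_1, T_2$ gives $\mu_z^{(T_1+T_2)}(z') = \sum_w \mu_z^{(T_1)}(w)\mu_w^{(T_2)}(z')$. Since $Q$ is doubly stochastic, $\sum_w \mu_w^{(T_2)}(z') = 1$ for any random $T_2$, and after subtracting $u(z')$ from both sides we obtain the key bilinear identity
\[ \mu_z^{(T_1+T_2)}(z')-u(z') = \sum_w \bigl[\mu_z^{(T_1)}(w)-u(w)\bigr]\bigl[\mu_w^{(T_2)}(z')-u(z')\bigr]. \]
A triangle inequality followed by summation over $z'$ yields the contraction $\delta_{T_1+T_2} \le 2\,\delta_{T_1}\,\delta_{T_2}$, where $\delta_T := \max_z \|\mu_z^{(T)}-u\|_{\mathrm{TV}}$.

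For the base case I take $P_1 = \mathsf{Unif}([k])$. When $Q = \operatorname{C}_k$, the identity $\sum_{t=1}^k \operatorname{C}_k^t = J$ (the cyclic shifts enumerate all $k$ destinations from any starting state) gives $\mu_z^{(T_1)} = u$ exactly. For the perturbed $Q = \operatorname{C}_k + E$ with $\|E\|_{\max}\le\varepsilon$, I apply the telescoping expansion $Q^t - \operatorname{C}_k^t = \sum_{s=0}^{t-1} \operatorname{C}_k^{t-1-s} E\, Q^s$, swap the sums in $t$ and $s$, and use the partial-cycle identity $\sum_{r=0}^{k-s-1}\operatorname{C}_k^r = J - \operatorname{C}_k^{-s}\sum_{r=0}^{s-1}\operatorname{C}_k^r$ together with the crucial annihilation $JE = 0$ (from zero column sums of $E$) to show $\delta_1 \le k\varepsilon$. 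Defining $P_m$ as the law of $T_1+\cdots+T_m$ with i.i.d.\ $T_i \sim P_1$ then gives a distribution supported on $\{m,m+1,\ldots,mk\}\subseteq [mk]$, and iterating the bilinear contraction $m-1$ times yields $\delta_{P_m}\le 2^{m-1}\delta_1^m\le (2k\varepsilon)^m$, completing the proof.

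The technical heart of the argument is the sharp base-case bound $\delta_1 \le k\varepsilon$. A naive termwise triangle inequality gives only $\|Q^t(z,\cdot)-\operatorname{C}_k^t(z,\cdot)\|_1 \le tk\varepsilon$ and hence, after averaging over $t\in[k]$, the weaker estimate $\delta_1 = O(k^2\varepsilon)$, which is off by a factor of $k$ and would not suffice to yield the $(2k\varepsilon)^m$ bound. Recovering this factor of $k$ is the main obstacle and forces one to use the algebraic cycle structure beyond the mere perturbation bound $\|Q-\operatorname{C}_k\|_{\max}\le\varepsilon$: concretely, the $s=0$ term in the reordered telescoping sum vanishes identically via $JE=0$, and the remaining contributions for $s\ge 1$ must be controlled using that partial column sums of $E$ over $s$ consecutive rows have magnitude at most $\min(s,k-s)\varepsilon$ (by the zero row- and column-sum property of $E$), rather than the crude $s\varepsilon$. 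This cyclic-group cancellation is what makes the bound scale linearly in $k\varepsilon$ and is specific to the structure of the base permutation $\operatorname{C}_k$.
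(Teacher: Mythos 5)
Your overall architecture matches the paper's: subsample the chain at times $T_1 \sim \mathsf{Unif}([k])$, bound the one-step TV distance $\delta_1$, then iterate a submultiplicative contraction over the $m$ i.i.d.\ blocks. The bilinear identity you derive (valid because $Q^t$ is doubly stochastic, so $\sum_w \mu_w^{(T_2)}(z')=1$) is a correct self-contained proof of $\delta_{T_1+T_2}\le 2\delta_{T_1}\delta_{T_2}$, which the paper instead obtains by citing the standard mixing bound $d(m)\le \bar d(1)^m\le (2d(1))^m$ from Levin--Peres. That part is fine.

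The ``technical heart'' paragraph, however, is wrong on both of its claims, and as written your base-case argument does not establish $\delta_1\le k\eps$. First, the statement that a plain triangle inequality gives only $\delta_1=O(k^2\eps)$ rests on the bound $\|Q^t(z,\cdot)-\operatorname C_k^t(z,\cdot)\|_1\le tk\eps$, which only uses $\|E\|_{\max}\le\eps$ and discards the fact that $E=Q-\operatorname C_k$ is a difference of two stochastic matrices with $\operatorname C_k$ a $0$--$1$ matrix. Because each row of $E$ has exactly one nonpositive entry (at the cycle image of $z$) and nonnegative entries summing to the same amount elsewhere, one has $\|E(z,\cdot)\|_1 = 2(1-Q(z,\sigma(z)))\le 2\eps$, so the \emph{same} telescoping already gives $\|Q^t(z,\cdot)-\operatorname C_k^t(z,\cdot)\|_1\le 2t\eps$ and hence $\delta_1\le \frac{1}{2k}\sum_{t=1}^k 2t\eps = \tfrac{k+1}{2}\eps \le k\eps$, with no cancellation whatsoever. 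The ``cyclic-group cancellation'' is a red herring. Second, the refinement you propose --- bounding \emph{entries} of $\bigl(\sum_{r<s}\operatorname C_k^r\bigr)E$ by $\min(s,k-s)\eps$ via partial column sums --- is not sufficient: summing over $k$ columns gives a row $\ell_1$ bound of $k\min(s,k-s)\eps$, and then summing over $s$ yields $\Theta(k^3\eps)$, i.e.\ $\delta_1=\Theta(k^2\eps)$ after normalization, which is \emph{worse} than the plain telescoping. You would need to exhibit within-row cancellation, which you have not argued. Finally, the paper's base case sidesteps all of this: for any $z,z'$, conditioning on $T_1$ equal to the cyclic distance from $z$ to $z'$ and lower-bounding the probability of traversing that path gives $P_{Z_{T_1}|Z_0=z}(z')\ge \tfrac1k(1-\eps)^k$, whence $\|P_{Z_{T_1}|Z_0=z}-\mathsf{Unif}([k])\|_{\mathrm{TV}}\le 1-(1-\eps)^k\le k\eps$, a three-line argument you may want to adopt.
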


 \begin{proof}
    Let $T_1, T_2, \cdots$ be i.i.d.~from $\mathsf{Unif}([k])$, and $T = \sum_{i=1}^m T_i$. Clearly $T$ is supported on $[mk]$. Note that for any $z, z'\in [k]$, one has
     $$\P(Z_{T_1}=z'|Z_0=z)\geq \frac{1}{k}(1-\eps)^k,$$
     where $1/k$ is the probability that the $T_1$ equals to the time $z$ travels to $z'$ under $\operatorname{C}_k$, and $(1-\eps)^k$ lower bounds the probability of following the path in $\operatorname{C}_k$. This implies that
     \begin{align*}
        \max_{z\in [k]} \| \mathsf{Unif}([k]) - P_{Z_{T_1}|Z_0=z}\|_{\mathrm{TV}} \le 1 - (1-\varepsilon)^k \le k\varepsilon. 
     \end{align*}
    Now the result follows from the standard mixing bound for the new chain $(Z_0,Z_{T_1},Z_{T_1+T_2},\cdots)$ (cf. \cite[Lemma 4.12]{levin2017markov}). 
 \end{proof}

Now we choose $m = \lceil \sqrt{n} \rceil$, and consider $J = \lceil n^{1/3} \rceil$ disjoint intervals $I_j = [(j-1)(mk+2L) + T_j, (j-1)(mk+2L) + T_j + 2L] \subseteq [(j-1)(mk+2L)+1, j(mk+2L)]$, where $T_1,\cdots,T_J$ are i.i.d.~according to the distribution $P_m$ in Lemma \ref{lem: sampling windows}. Note that $I_1,\cdots,I_J\subseteq [n]$ for $n\ge k^6$. By the HMM structure, for any $j\in [J]$ it holds that
\begin{align*}
\E\| P_{X^{2L+1}} - P_{X_{I_j} | (X_{I_i})_{i<j} } \|_{\mathrm{TV}} &\stepa{\le}\E\| P_{Z_1} - P_{Z_{t_{j-1} + T_j}|(X_{I_i})_{i<j}} \|_{\mathrm{TV}} \\
&\stepb{\le} \E\| \mathsf{Unif}([k]) - P_{Z_{t_{j-1} + T_j}|Z_{t_{j-1}}} \|_{\mathrm{TV}} \\
&\stepc{\le} (2k\varepsilon)^m, 
\end{align*}
where (a) follows from the data processing inequality for the TV distance (where $t_j$ is the end time of $I_j$), (b) is due to the Markov structure $(X_{I_i})_{i<j}\to Z_{t_{j-1}} \to Z_{t_{j-1}+T_j}$, and (c) follows from Lemma \ref{lem: sampling windows}. Therefore, by the subadditivity of TV distance,
\begin{align}\label{eq:TV_upper_bound}
\|P_{X^{2L+1}}^{\otimes J} - P_{(X_{I_j})_{j\in J}}\|_{\mathrm{TV}} \le \sum_{j=1}^J \E\| P_{X^{2L+1}} - P_{X_{I_j} | (X_{I_i})_{i<j} } \|_{\mathrm{TV}} \le J(2k\varepsilon)^m = e^{-\widetilde{\Omega}(n)},
\end{align}
where the last inequality follows from the assumption $\varepsilon\le k^{-c_2}$ in Lemma \ref{prop: redundancy lower bound}, for $c_2\ge 2$. 

Using the near independence, we proceed to estimate the joint distribution $P_{X^{2L+1}}$ (or equivalently the tensor $M$) based on the empirical distribution $\widehat{M}_n$ of $\{X_{I_j}\}_{j\in J}$. If $\{X_{I_j}\}_{j\in J}$ were indeed i.i.d., by Hoeffding's inequality and union bound we would have
\begin{align*}
\P( \| \widehat{M}_n - M\|_{\mathrm{F}} \ge n^{-d_2} ) &\le \P( \| \widehat{M}_n - M\|_{\max} \ge n^{-d_2} ) \\
&\le 2^{2L+1}\cdot 2\exp(-2J(n^{-d_2})^2) \\
&= \exp(-\widetilde{\Omega}(n^{1/2 - 2d_2})).
\end{align*}
For weakly dependent $\{X_{I_j}\}_{j\in J}$, we invoke \eqref{eq:TV_upper_bound} to conclude that
\begin{align*}
\P( \| \widehat{M}_n - M\|_{\mathrm{F}} \ge n^{-d_2} ) \le \exp(-\widetilde{\Omega}(n^{1/2 - 2d_2})) + e^{-\widetilde{\Omega}(n)}. 
\end{align*}
Consequently, \eqref{eq:first_step} holds with $d_2 = d_3 = 1/8$. 

\subsubsection{Second step: proof of \eqref{eq:second_step}}
Given the estimate $\widehat{M}_n$ of the tensor $M$, we aim to recover the matrices $E$ and $F$ (up to permutations of columns). To this end we recall the following result in tensor decomposition. 

 \begin{lemma}[Stability of tensor decomposition, Theorem 2.3 in \cite{Bhaskara2013Smoothed}]\label{lem: tensor decomp}
Let $T=\sum_{i=1}^k u_i \otimes v_i \otimes w_i$ be a tensor satisfying the following conditions:
\begin{enumerate}
     \item The condition numbers $\kappa(U), \kappa(V) \leq \kappa$, where $U, V$ are $m\times k$ matrices with column vectors $u_i$ and $v_i$, respectively; 
     \item The vectors $w_i\in \mathbb{R}^2$ are not close to parallel: $\min_{i \neq j}\left\|\frac{w_i}{\left\|w_i\right\|}-\frac{w_j}{\left\|w_j\right\|}\right\|_2 \geq \delta>0$;
     \item The decompositions are bounded: for all $i,\left\|u_i\right\|_2,\left\|v_i\right\|_2,\left\|w_i\right\|_2 \leq 1$.
 \end{enumerate}
Now given the noisy tensor $T+E \in \mathbb{R}^{m \times m \times 2}$ with the entries of $E$ bounded by $\eps$, there exists an efficient algorithm that returns each rank one term in the decomposition of $T$ (up to renaming), within an additive error of $\eps\cdot \operatorname{poly}(m, k, \kappa, 1 / \delta)$.
 \end{lemma}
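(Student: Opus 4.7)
Since this is essentially a restatement of a known algorithmic result (Jennrich's algorithm with its smoothed-analysis stability refinement), the plan is to follow that approach and carry out the standard perturbation bound. First, I would \emph{flatten the tensor along the third mode}: for two vectors $a, b \in \mathbb{R}^2$, form the matrices $M_a = T(\cdot,\cdot,a) = \sum_{i=1}^k \langle a, w_i\rangle u_i v_i^\top = U D_a V^\top$ and $M_b = U D_b V^\top$, where $D_a = \diag(\langle a, w_1\rangle,\ldots,\langle a, w_k\rangle)$ and similarly for $D_b$. In the noiseless case, because $U$ and $V$ have full column rank (which is implied by condition 1 via $\kappa(U), \kappa(V) \leq \kappa$), the matrix $M_a M_b^+$ equals $U D_a D_b^{-1} U^+$ and is diagonalizable with eigenvectors given by the columns of $U$ and eigenvalues $\lambda_i = \langle a, w_i\rangle / \langle b, w_i\rangle$. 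By condition 2 no two $w_i$ are parallel, so a generic choice of $(a,b)$ keeps the $\lambda_i$'s well-separated; $U$ is thus recovered up to permutation and column scaling. The matrix $V$ is recovered analogously from $M_b^+ M_a$, and each $w_i$ is then recovered by solving a small linear system once $U$ and $V$ are in hand.

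Second, for the noisy input $\widetilde{T} = T + E$, I would perform the perturbation analysis in three steps. Set $\widetilde M_a = \widetilde T(\cdot,\cdot,a)$ and analogously $\widetilde M_b$; the entrywise bound on $E$ gives $\|\widetilde M_a - M_a\|_2 \leq \varepsilon \cdot O(m)$ for unit $a$ and similarly for $\widetilde M_b$. Wedin's pseudoinverse perturbation yields $\|\widetilde M_a \widetilde M_b^+ - M_a M_b^+\|_2 \lesssim \varepsilon \cdot \operatorname{poly}(m, \kappa, 1/\delta)$ once one notes that $\|M_b^+\|_2 \leq \kappa / \sigma_{\min}(D_b)$ and that a good $(a,b)$ makes $\sigma_{\min}(D_b) \gtrsim \delta / \operatorname{poly}(k)$. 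Applying a non-symmetric eigenvector perturbation bound (Bauer--Fike or the smoothed-analysis bound of \cite{Bhaskara2013Smoothed}) then gives column-wise recovery of $U$ with additive error $\varepsilon \cdot \operatorname{poly}(m, k, \kappa, 1/\delta)$, with the $1/\delta$ factor entering through the minimum eigenvalue gap among the $\lambda_i$'s. The same argument recovers $V$, and solving the linear system for $W$ propagates the same order of error to the $w_i$.

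The hard part is the non-symmetric eigenvector perturbation step: $M_a M_b^+$ is diagonalizable but typically not normal, so Davis--Kahan does not apply directly. To control eigenvectors one either pays a factor of $\kappa(U)$ via a Bauer--Fike-type bound or, more delicately, randomizes over $(a,b) \in \mathbb{R}^2$ to ensure, with constant probability, (i) the pairwise ratios $\lambda_i / \lambda_j$ are $\Omega(\delta / \operatorname{poly}(k))$ apart and (ii) $\min_i |\langle b, w_i\rangle|$ is $\Omega(1/\operatorname{poly}(k))$ so that $D_b$ is invertible with a controlled condition number. Collecting the polynomial factors from tensor flattening, pseudoinverse perturbation, and eigenvector perturbation yields the claimed $\varepsilon \cdot \operatorname{poly}(m, k, \kappa, 1/\delta)$ accuracy. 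The ``efficient algorithm'' claim is immediate from the procedure above, since each step (forming two slice matrices, taking a pseudoinverse, one eigendecomposition, and solving one linear system) runs in time polynomial in $m$ and $k$.
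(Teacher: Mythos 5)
This lemma is not proved in the paper at all; it is cited verbatim as Theorem~2.3 of \cite{Bhaskara2013Smoothed}, so there is no internal proof to compare your argument against. That said, your sketch correctly identifies the strategy used in that reference: flatten along the third mode to get two $m\times m$ slice matrices $M_a = U D_a V^\top$ and $M_b = U D_b V^\top$, diagonalize $M_a M_b^+$ to read off $U$ up to permutation and scaling (Jennrich / simultaneous diagonalization), recover $V$ symmetrically, then back out the $w_i$ from a small linear system, and propagate the entrywise noise through a pseudoinverse perturbation bound and a non-normal eigenvector perturbation bound. This is essentially what the cited proof does.

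Two small technical caveats worth flagging. First, the algorithm can only return the rank-one tensors $u_i\otimes v_i\otimes w_i$, not the three factor vectors individually, because of the inherent scaling ambiguity $(cu_i)\otimes(v_i/c)\otimes w_i = u_i\otimes v_i\otimes w_i$; the lemma statement is careful to say ``each rank one term,'' and your phrasing ``column-wise recovery of $U$'' and ``solving the linear system for $W$'' should be read modulo this normalization. Second, Bauer--Fike is an eigenvalue perturbation theorem, not an eigenvector one; to get the eigenvector bound for a diagonalizable but non-normal matrix like $M_a M_b^+$ one needs the eigenvector condition number (controlled here by $\kappa(U)$) together with the eigenvalue gap from condition~2, which is precisely where the $\operatorname{poly}(\kappa, 1/\delta)$ factors enter in \cite{Bhaskara2013Smoothed}. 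Your discussion does gesture at both of these points, so the outline is sound; a complete proof would simply need to carry out the perturbation calculations explicitly as in the cited reference.
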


Note that in our tensor decomposition \eqref{eq:tensor_decomposition}, we have $m = 2^{L} = \operatorname{poly}(k)$, and $\varepsilon \le kn^{-d_2}$ by \eqref{eq:first_step} with high probability. Consequently, if the conditions of Lemma \ref{lem: tensor decomp} hold with $\kappa, 1/\delta = \operatorname{poly}(k)$, then by choosing $c_1>0$ large enough in the condition $n\ge k^{c_1}$ in Lemma \ref{prop: redundancy lower bound}, Lemma \ref{lem: tensor decomp} will imply \eqref{eq:second_step}. Hence it remains to verify the conditions of Lemma \ref{lem: tensor decomp}. 

The third condition is straightforward: a probability vector $e$ must satisfy $\|e\|_2\le \|e\|_1 = 1$. To verify the first two conditions, we use a probabilistic argument and choose the vectors $o_i$ (i.e. emission probabilities $P_{X|Z}$) randomly. Specifically, for fixed constants $a_1,a_2$ with $0<a_1<a_2<\pi/2$, we generate i.i.d.~angles $\theta_1, \cdots, \theta_k\sim \mathsf{Unif}([a_1,a_2])$, and set
\begin{align*}
    p_i = \P(X = 1 | Z = i) = \frac{\cos\theta_i}{\cos\theta_i + \sin\theta_i}, \quad i\in [k]. 
\end{align*}
Note that for appropriately chosen constants $a_1,a_2$, the condition $p_i\in (t_1,t_2)$ required in Lemma \ref{prop: redundancy lower bound} holds almost surely. The reason behind the choice of $p_i$ is summarized by the following lemma.

\begin{lemma}\label{lem:singular_value_LB}
If $Q = \operatorname{C}_k$ and $k\ge L$, then for a large enough constant $d_1>0$, there exist absolute constants $d_5, d_6>0$ such that
\begin{align*}
\P( \max\{\kappa(E), \kappa(F)\} \le k^{d_5}) \ge 1 - 2k^{-d_6}, 
\end{align*}
where $\kappa(E)$ denotes the condition number of $E$, i.e. the ratio between the largest and the $k$th singular values of $E$. 
\end{lemma}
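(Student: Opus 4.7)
The plan is to exploit the Hadamard transform to diagonalize the tensor-product structure of the columns of $E$. When $Q = \operatorname{C}_k$, the hidden chain evolves deterministically ($Z_t = z_0 + t \bmod k$ given $Z_0 = z_0$), so the column $E_{:,z_0}$ is the pure product vector $\bigotimes_{t=1}^L (1-p_{z_0+t},\,p_{z_0+t})^\top$. Let $H = H_2^{\otimes L}$ with $H_2 = \bigl(\begin{smallmatrix} 1 & 1 \\ 1 & -1 \end{smallmatrix}\bigr)$; then $H/\sqrt{2^L}$ is orthogonal so $\kappa(HE) = \kappa(E)$, and a direct computation gives $(HE)_{S,z_0} = \prod_{t\in S} q_{z_0+t}$ for $S\subseteq [L]$, where $q_i = 1 - 2p_i$. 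Since each $|q_i|$ is bounded away from $1$ (because $\theta_i \in [a_1, a_2]\subset (0, \pi/2)$), every entry of $HE$ lies in $[-1,1]$, so $\sigma_{\max}(HE) \le \|HE\|_{\mathrm{F}} \le \sqrt{k\cdot 2^L} = k^{O(1)}$ once $L = O(\log k)$.

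For the lower bound on $\sigma_{\min}(HE)$, let $\Phi(y) = \bigotimes_t (1, y_t)^\top$ so that $HE$ has columns $\Phi(\phi(z_0))$ with $\phi(z_0) = (q_{z_0+1},\ldots,q_{z_0+L})$. The Gram matrix $\tilde G$ of the normalized columns $\Phi(\phi(z_0))/\|\Phi(\phi(z_0))\|$ has ones on the diagonal and off-diagonal entries $\tilde G_{ij} = \prod_t \frac{1+q_{i+t}q_{j+t}}{\sqrt{(1+q_{i+t}^2)(1+q_{j+t}^2)}}$. The elementary identity $\bigl(\frac{1+xy}{\sqrt{(1+x^2)(1+y^2)}}\bigr)^2 = 1 - \frac{(x-y)^2}{(1+x^2)(1+y^2)}$ together with the uniform bound on $|q_i|$ gives $|\tilde G_{ij}|^2 \le \exp(-c\, S_{ij})$ for an absolute $c>0$, where $S_{ij} \triangleq \sum_{t=1}^L (q_{i+t} - q_{j+t})^2$.

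The key probabilistic step is to show that $S_{ij} \ge c'L$ holds simultaneously for all $i\ne j$ with probability $\ge 1 - k^{-d_6}$. For a fixed pair with $d \triangleq (j-i)\bmod k \in \{1,\ldots,k-1\}$, the variables $q_{i+t}$ and $q_{i+d+t}$ are always distinct (since $d\not\equiv 0 \bmod k$) independent samples, so $\E S_{ij} = 2L\sigma^2$ with $\sigma^2 = \operatorname{Var}(q_1) = \Theta(1)$. Moreover $S_{ij}$ is a function of at most $2L$ independent bounded random variables, each appearing in at most two summands, so the bounded-difference constants are $O(1)$ and McDiarmid yields $\P(S_{ij} \le L\sigma^2) \le 2e^{-\Omega(L)}$. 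Taking $L = \lceil d_1 \log k \rceil$ for a sufficiently large constant $d_1$ and union-bounding over $\binom{k}{2}$ pairs establishes the claim.

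On this good event, choosing $d_1$ large enough forces $|\tilde G_{ij}| \le k^{-2}$, so Gershgorin gives $\lambda_{\min}(\tilde G) \ge 1/2$; combined with $\|\Phi(\phi(z_0))\| \ge 1$ this yields $\sigma_{\min}(HE) \ge 1/\sqrt{2}$, and therefore $\kappa(E) = \kappa(HE) \le \sqrt{2\cdot k \cdot 2^L} = O(k^{(d_1+1)/2}) \le k^{d_5}$. The identical argument applies to $F$ (whose columns have the analogous product structure with $z_0 - t$ in place of $z_0 + t$), and a final union bound over the two events completes the proof. The main obstacle I anticipate is the overlapping-windows case $d\le L$, where $q$-variables are shared between summands of $S_{ij}$; however, because each $q_s$ still appears in at most two summands, the McDiarmid constants remain $O(1)$ and the exponential concentration survives unchanged.
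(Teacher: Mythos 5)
Your proof is correct, and it takes a genuinely different computational route from the paper's. The paper works directly with the Gram matrix $E^\top E$, factors it as $DAD$ with $A_{ij}=\prod_t\cos(\theta_{t+i}-\theta_{t+j})$, and then — to avoid the overlapping-windows issue you worry about at the end — uses the hypothesis $k\ge L$ to subselect $N\ge L/2$ indices $t_r$ for which the pairs $\{i+t_r, j+t_r\}$ are pairwise disjoint, making the $N$ factors $\log\cos(\theta_{i+t_r}-\theta_{j+t_r})$ literally i.i.d., after which plain Hoeffding applies. You instead left-multiply by the Hadamard matrix to turn each column of $E$ into a rank-one tensor $\bigotimes_t(1,q_{z_0+t})^\top$, observe that the normalized Gram entries are $\prod_t \frac{1+q_{i+t}q_{j+t}}{\sqrt{(1+q_{i+t}^2)(1+q_{j+t}^2)}}$ (which, one can check, equals exactly the paper's $A_{ij}$ after the substitution $q_a=\frac{\sin\theta_a-\cos\theta_a}{\sin\theta_a+\cos\theta_a}$), and apply McDiarmid directly to $S_{ij}=\sum_t(q_{i+t}-q_{j+t})^2$, exploiting that each $q_s$ appears in at most two summands. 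Both routes use $k\ge L$ in an essential way (the paper to extract disjoint pairs, you to get the bounded-difference constants $O(1)$), and both end with Gershgorin. Your McDiarmid argument is arguably a touch cleaner since it avoids the combinatorial subselection lemma, while the paper's direct $DAD$ factorization is more elementary in that it never introduces the Hadamard transform; the bounds obtained are of comparable quality.

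One small presentational remark: you assert the injectivity of $t\mapsto(i+t)\bmod k$ on $[L]$ only implicitly; it would be worth stating that this is exactly where $k\ge L$ enters your argument, so the reader sees why the at-most-two-appearances claim can fail (and your bounded-difference constants blow up) when $k<L$.
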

\begin{proof}
By symmetry we only prove the claim for $E$. If $Q = \operatorname{C}_k$ and $k\ge L$, we have
\begin{align*}
E_{x^L, i} = P_{X_1^L|Z_0}(x^L|i) = \prod_{t=1}^{L} \frac{(\cos\theta_{t+i})^{1-x_t} (\sin\theta_{t+i})^{x_t} }{\cos \theta_{t+i} + \sin\theta_{t+i}}, 
\end{align*}
where the indices of $\theta$ are understood modulo $k$. Consequently, for $i,j\in [k]$, 
\begin{align*}
(E^\top E)_{i,j} = \prod_{t=1}^L \frac{\cos(\theta_{t+i} - \theta_{t+j})}{(\cos\theta_{t+i}+\sin\theta_{t+i})(\cos\theta_{t+j} + \sin\theta_{t+j})}. 
\end{align*}
In matrix forms, $E^\top E = DAD$, where $D\in\mathbb{R}^{k\times k}$ is a diagonal matrix with $D_{ii} = \prod_{t=1}^L (\cos\theta_{t+i}+\sin\theta_{t+i})^{-1} \in [2^{-L/2},1]$, and $A_{ij} = \prod_{t=1}^L \cos(\theta_{t+i} - \theta_{t+j})$. Consequently, 
\begin{align}\label{eq:reduction_to_A}
    \lambda_1(E^\top E)\le \lambda_1(A), \qquad \lambda_k(E^\top E) \ge (2^{-L/2})^2 \lambda_k(A)\ge 2^{-L}\lambda_k(A). 
\end{align}
Next we analyze the matrix $A$. Clearly the diagonal entries of $A$ are all $1$. For off-diagonal entries $A_{ij}$, as $k\ge L$, we may pick $N\ge L/2$ elements $t_1,\cdots,t_N\in [L]$ such that the set $\{i+t_r, j+t_r: r\in [N]\}$ contains $2N$ distinct elements. Then
\begin{align*}
\log A_{ij}\le \sum_{r=1}^N \log\cos(\theta_{i+t_r} - \theta_{j+t_r}), 
\end{align*}
where the RHS is the sum of $N$ i.i.d.~random variables. Since $\E[\log\cos(\theta_1 - \theta_2)]\le -c(a_1,a_2)$ for some constant $c(a_1,a_2)>0$, and $|\log\cos(\theta_1-\theta_2)|\le |\log\cos(a_1 - a_2)|$ almost surely, Hoeffding's inequality implies that $\log A_{ij}= -\Omega(N)$ with probability at least $1-\exp(-\Omega(N))=1-k^{-\Omega(d_1)}$. By choosing $d_1$ large enough, a union bound implies that with probability at least $1-k^{-d_6}$, all off-diagonal entries of $A$ have magnitude at most $1/(2k)$. Then by Gershgorin circle theorem, 
\begin{align*}
    \lambda_1(A) &\le 1 + \max_{i\in [k]}\sum_{j\neq i}|A_{ij}| \le 1 + \frac{1}{2k}\cdot (k-1) \le \frac{3}{2}, \\ 
    \lambda_k(A) &\ge 1 - \max_{i\in [k]}\sum_{j\neq i}|A_{ij}| \ge 1 - \frac{1}{2k}\cdot (k-1) \ge \frac{1}{2}
\end{align*}
hold with high probability. A combination of \eqref{eq:reduction_to_A} and the above result completes the proof. 
\end{proof}

The result of Lemma \ref{lem:singular_value_LB} assumes $Q = \operatorname{C}_k$, and we still need to generalize it to the case where $\|Q-\operatorname{C}_k\|_{\max} \le k^{-c_2}$. To this end we apply a matrix perturbation analysis. Let $E^\star$ be the matrix $E$ with columns $e_i^\star$ under $Q = \operatorname{C}_k$, then 
\begin{align}
\|e_i^\star - e_i\|_1 &= 2\| P_{X^L|Z_0=i, Q=\operatorname{C}_k} - P_{X^L|Z_0=i, Q} \|_{\mathrm{TV}} \nonumber\\
&\stepa{\le} 2\| P_{Z^L|Z_0=i, Q=\operatorname{C}_k} - P_{Z^L|Z_0=i, Q} \|_{\mathrm{TV}}\nonumber \\
&\stepb{=} 2\left(1 - \prod_{t=1}^L Q_{i+t-1,i+t}\right)\nonumber \\
&\le 2\left(1 - (1-k^{-c_2})^L\right) \le 2Lk^{-c_2},  \label{eq: perturb analysis}
\end{align}
where (a) follows from the data processing equality for the TV distance and that $P_{X^L|Z^L}$ does not depend on $Q$, and (b) observes that $P_{Z^L|Z_0=i, Q=\operatorname{C}_k}$ is supported on a single path $z^L = (i+1,\cdots,i+L-1)$ (all indices are understood modulo $k$), and $\|P-Q\|_{\mathrm{TV}} = \sum_{x: P(x)>Q(x)}(P(x) - Q(x))$. Consequently, 
\begin{equation}\label{eq: frob ub moment}
    \|E^\star - E\|_{\mathrm{F}}^2 = \sum_{i=1}^k \|e_i^\star - e_i\|_2^2 \le \sum_{i=1}^k \|e_i^\star - e_i\|_1^2 = \widetilde{O}(k^{1-2c_2}), 
\end{equation}
so Mirsky's theorem (cf. Lemma \ref{lem: Mirsky}) shows that for a large enough constant $c_2>0$, the condition number of $E$ is close to the condition number of $E^\star$. This shows that $\kappa = \operatorname{poly}(k)$ with probability at least $1-2k^{-d_6}$ for the first condition in Lemma \ref{lem: tensor decomp}. 

Finally we check the second condition in Lemma \ref{lem: tensor decomp}. Since $\theta_1,\cdots,\theta_k$ are i.i.d.~and uniformly distributed on $[a_1, a_2]$, it holds that
\begin{align*}
\P(|\theta_1 - \theta_2|\le k^{-3}) \lesssim k^{-3}. 
\end{align*}
By a union bound, with probability at least $1-1/k$, we have $|\theta_i - \theta_j|\ge 1/k^3$ for all $i \neq j$. By the definition of $p_i$, this implies that for $i\neq j$, 
\begin{align*}
\left\| \frac{(p_i, 1-p_i)}{\sqrt{p_i^2 + (1-p_i)^2}} -  \frac{(p_j, 1-p_j)}{\sqrt{p_j^2 + (1-p_j)^2}} \right\|_2 \gtrsim |p_i - p_j| \gtrsim |\theta_i - \theta_j| \ge k^{-3}, 
\end{align*}
so that $\delta = \Omega(1/k^3)$ in Lemma \ref{lem: tensor decomp} with probability at least $1-1/k$. 

In summary, for $k\ge c_0$ with a large enough constant $c_0>0$, all conditions in Lemma \ref{lem: tensor decomp} are satisfied for $\kappa, 1/\delta = \operatorname{poly}(k)$ with probability at least $0.99$. By the arguments under Lemma \ref{lem: tensor decomp} we arrive at \eqref{eq:second_step}, as desired. 

\subsubsection{Third step: proof of Lemma \ref{prop: redundancy lower bound}}
Given accurate estimates of $\widehat E_n, \widehat F_n$, we now seek to recover the transition matrix $\widehat Q_n$ with a small error. To this end, we note the following lemma:

     \begin{lemma}[Tensor to Transition, Theorem 4 in \cite{huang2015minimal}]\label{lem: tensor to trans}
    Given matrix $E \in \mathbb{R}^{2^L \times k}$ such that $E_{x^L, z_0}=\mathbb{P}\left(X^L=x^L | Z_0=z_0\right)$ is the conditional forward moment. We marginalize the conditional distribution to $E^{(L-1)}_{x^{L-1}, z_0}=\mathbb{P}\left(X^{L-1}=x^{L-1} | Z_0=z_0\right) \in \mathbb{R}^{2^{L-1} \times k}$. If $E$ has full column rank $k$, then the transition matrix (let $O\in\mathbb{R}^{2\times k}$ be the emission):
$$
Q=\left(O \odot E^{(L-1)}\right)^{\dagger} E\triangleq B^\dagger E.
$$
Specifically, the Khatri-Rao product $O \odot E^{(L-1)}\in \mathbb{R}^{2^L\times k}$ is exactly:
\begin{equation}\label{eq:B_def}
    B_{x_0^{L-1}, z_0}=\left(O \odot E^{(L-1)}\right)_{x_0^{L-1}, z_0}=\P(X_0^{L-1}=x_0^{n-1}|Z_0=z_0)
\end{equation}
and $X^{\dagger}\text { denotes the pseudo-inverse of a matrix } X \text {. }$
 \end{lemma}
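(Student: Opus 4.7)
The plan is to establish two structural identities---one writing $B$ as a Khatri-Rao product, and one writing $E$ as a linear transformation of $B$ through $Q$---then solve for $Q$ by a single pseudo-inverse application.

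For the first identity, I would use the HMM conditional independence structure. Given $Z_0 = z_0$, the present emission $X_0$ is independent of the future emissions $X_1^{L-1}$, hence
\begin{align*}
B_{x_0^{L-1}, z_0} &= \P(X_0^{L-1} = x_0^{L-1} \mid Z_0 = z_0) \\
&= O(x_0 \mid z_0) \cdot \P(X_1^{L-1} = x_1^{L-1} \mid Z_0 = z_0) \\
&= O_{x_0, z_0} \cdot E^{(L-1)}_{x_1^{L-1}, z_0},
\end{align*}
which matches the entry-wise formula for the column-wise Khatri-Rao product of the $2 \times k$ emission matrix $O$ with $E^{(L-1)}$.

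For the second identity, I would marginalize over $Z_1$ and invoke stationarity. By the Markov property,
\begin{align*}
E_{x_1^L, z_0} = \sum_{z_1} \P(Z_1 = z_1 \mid Z_0 = z_0) \cdot \P(X_1^L = x_1^L \mid Z_1 = z_1).
\end{align*}
Stationarity then identifies $\P(X_1^L = \cdot \mid Z_1 = z_1)$ with $\P(X_0^{L-1} = \cdot \mid Z_0 = z_1) = B_{\cdot, z_1}$ via a one-step time shift. Summing yields $E = B Q$ with $Q$ interpreted under the convention $Q_{z_1, z_0} = \P(Z_1 = z_1 \mid Z_0 = z_0)$. Since $E$ has full column rank $k$ by hypothesis and $Q \in \mathbb{R}^{k \times k}$, both $B$ has full column rank and $Q$ is invertible, so $B^\dagger B = I_k$; left-multiplying $E = BQ$ by $B^\dagger$ gives the claimed identity $Q = B^\dagger E$.

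The conceptual content is light---everything reduces to conditional independence, the Markov property, and stationarity---so the only real obstacle is bookkeeping: aligning the two $L$-tuple index sets $\{0, 1, \ldots, L-1\}$ and $\{1, 2, \ldots, L\}$ consistently between the rows of $B$ and $E$, and selecting the transposition convention for $Q$ that matches the stated formula. Once these are pinned down, the proof is a one-line linear-algebra computation.
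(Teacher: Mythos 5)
The paper does not give its own proof of this lemma: it is imported verbatim as Theorem 4 of \cite{huang2015minimal}, so there is no in-paper argument to compare against. Judged on its own merits, your derivation is correct and is essentially the standard proof underlying that theorem: (i) $B = O \odot E^{(L-1)}$ follows from the conditional independence of $X_0$ and $X_1^{L-1}$ given $Z_0$; (ii) $E = BQ$ follows from marginalizing over $Z_1$, the Markov property $\P(X_1^L \mid Z_0, Z_1) = \P(X_1^L \mid Z_1)$, and time-homogeneity (you invoke stationarity, but one-step time-homogeneity of the transition and emission kernels is all that is used to shift indices); (iii) $\operatorname{rank}(E)=k$ forces $\operatorname{rank}(B)=\operatorname{rank}(Q)=k$, so $B^\dagger B = I_k$ and left-multiplying gives $Q = B^\dagger E$. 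Your flag that the formula $E=BQ$ forces the column-stochastic convention $Q_{z_1,z_0} = \P(Z_1=z_1\mid Z_0=z_0)$ is also the right thing to note, since the surrounding sections of the paper write the transition as $\P(Z_2=j\mid Z_1=i)$ in the $(i,j)$ slot; the lemma as stated is consistent only up to this transpose, which is harmless for the Frobenius-norm estimates in which it is used.
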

Firstly, we show that $\kappa(B)\geq k^{-O(1)}$ whenever the emission is such that Lemma~\ref{lem:singular_value_LB} is satisfied and that the perturbation from $Q$ to $\T_k$ is not large. This is because when $Q=\T_k$, the corresponding $B^\star$ is exactly a column permutation of $E^\star$. In other words, for all emission matrices $O$ one has that $\kappa(E^\star)=\kappa(B^\star)$.
Following the exact same lines as \eqref{eq: perturb analysis} (replacing $Z_0$ with $Z_1$), we get that:
$$\|B-B^\star\|_{\mathrm{F}}=\widetilde O(k^{1-2c_2})$$and hence by Mirsky's theorem we know that the condition number for $\kappa(B)=\operatorname{poly}(k)$ with probability at least $1-2k^{-d_6}$.

Taking the union of such events, we now have $\kappa(B), \kappa(E)$ all upper bounded by $\operatorname{poly}(k)$, and we wish to show that for $\widehat{Q}_n = \widehat{B}_n^\dagger \widehat{E}_n$, the error
$$Q-\widehat Q_n=B^\dagger(E-\widehat E_n)+(B^\dagger - \widehat B_n^\dagger)\widehat{E}_n$$
has small norm. Note that
\begin{align*}
    \|Q-\widehat Q_n\|_{\mathrm{op}}&\leq \|B^\dagger(E-\widehat E_n)\|_{\mathrm{op}}+\|(\widehat B_n^\dagger-B^\dagger)\widehat E_n\|_{\mathrm{op}}\\
    &\leq \|B^\dagger\|_{\mathrm{F}}\|E-\widehat E_n\|_{\mathrm{op}}+\|\widehat E_n\|_{\mathrm{F}}\|\widehat B_n^\dagger-B^\dagger\|_{\mathrm{op}}\\
    &\leq 2^Lk\left(\|E-\widehat E_n\|_{\mathrm{op}}+\|\widehat B_n^\dagger-B^\dagger\|_{\mathrm{op}}\right).
\end{align*}
By \eqref{eq:second_step}, one only need to upper bound $\|\widehat B_n^\dagger-B^\dagger\|_{\mathrm{op}}\in O(n^{-c})$ for some constant $c>0$. Note that
$$\|B-\widehat B_n\|_{\mathrm{F}}\leq \|E^{(L-1)}-\widehat E^{(L-1)}_n\|_{\mathrm{F}}\leq 2\|E-\widehat E_n\|_{\mathrm{F}}$$
by \eqref{eq:B_def} and therefore $\|\widehat B_n-B\|_{\mathrm{F}}\in O(n^{-c'})$ for some $c'>0$ from \eqref{eq:second_step}. Our conclusion follows from the following lemma:
\begin{lemma}[Theorem 4.3 in \cite{stewart1969continuity}]\label{lem: inv cont}
Let $A$ be an $m \times n$ matrix of rank $n$, and let the error matrix be $E$.
Let $\kappa=\|A\|_{\mathrm{op}}\left\|A^{\dagger}\right\|_{\mathrm{op}}$ and $H=(A+E)^{\dagger}-A^{\dagger}$. If
$
\left\|A^{\dagger}\right\|_{\mathrm{op}}\left\|E\right\|_{\mathrm{op}}<1,
$
then
$$
\frac{\|H\|_{\mathrm{op}}}{\left\|A^{\dagger}\right\|_{\mathrm{op}}}<(1+\gamma)\beta,
$$
where
$
\gamma=\left(1-\frac{\kappa\left\|E\right\|_{\mathrm{op}}}{\|A\|_{\mathrm{op}}}\right)^{-1}
$
and
$
\beta=\frac{\gamma \kappa\left\|E\right\|_{\mathrm{op}}}{\|A\|_{\mathrm{op}}}.
$
 \end{lemma}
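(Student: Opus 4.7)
The plan is to invoke the full-column-rank pseudoinverse formula $A^\dagger = (A^*A)^{-1}A^*$ and derive a clean decomposition of $H = B^\dagger - A^\dagger$, where $B := A + E$. First, since the hypothesis $\|A^\dagger\|_{\mathrm{op}}\|E\|_{\mathrm{op}} < 1$ combined with Weyl's inequality $\sigma_{\min}(B) \geq \sigma_{\min}(A) - \|E\|_{\mathrm{op}}$ forces $\sigma_{\min}(B) > 0$, the matrix $B$ also has full column rank, so the analogous formula $B^\dagger = (B^*B)^{-1}B^*$ applies. The same Weyl estimate immediately yields
\[
\|B^\dagger\|_{\mathrm{op}} \;\leq\; \frac{\|A^\dagger\|_{\mathrm{op}}}{1-\|A^\dagger\|_{\mathrm{op}}\|E\|_{\mathrm{op}}} \;=\; \gamma \|A^\dagger\|_{\mathrm{op}},
\]
recalling that $\eta := \|A^\dagger\|_{\mathrm{op}}\|E\|_{\mathrm{op}} = \kappa\|E\|_{\mathrm{op}}/\|A\|_{\mathrm{op}}$ and $\gamma = (1-\eta)^{-1}$.

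Next I would derive the central algebraic identity
\[
H \;=\; (B^*B)^{-1}\bigl(B^* - B^*B A^\dagger\bigr) \;=\; B^\dagger\bigl(I - BA^\dagger\bigr) \;=\; B^\dagger(I - P) \;-\; B^\dagger E A^\dagger,
\]
where $P := AA^\dagger$ is the orthogonal projection onto the range of $A$ and the last step uses $BA^\dagger = AA^\dagger + EA^\dagger$. The key observation, and really the only non-mechanical step in the whole argument, is that $A^*(I - P) = A^* - A^*AA^\dagger = 0$, and hence $B^*(I - P) = E^*(I - P)$. Consequently
\[
B^\dagger(I - P) \;=\; (B^*B)^{-1}B^*(I - P) \;=\; (B^*B)^{-1}E^*(I - P),
\]
which is what produces a factor of $\|E\|_{\mathrm{op}}$ rather than $\|B\|_{\mathrm{op}}$ in the first term of $H$.

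From here it is just bookkeeping. Using $\|(B^*B)^{-1}\|_{\mathrm{op}} = \|B^\dagger\|_{\mathrm{op}}^2 \leq \gamma^2\|A^\dagger\|_{\mathrm{op}}^2$ together with $\|I - P\|_{\mathrm{op}}\leq 1$ for the first term, and $\|B^\dagger E A^\dagger\|_{\mathrm{op}} \leq \gamma\|A^\dagger\|_{\mathrm{op}}^2\|E\|_{\mathrm{op}}$ for the second, we obtain
\[
\|H\|_{\mathrm{op}} \;\leq\; \gamma(\gamma + 1)\|A^\dagger\|_{\mathrm{op}}^2\|E\|_{\mathrm{op}}.
\]
Dividing through by $\|A^\dagger\|_{\mathrm{op}}$ and recognizing $\beta = \gamma\eta$ yields the stated bound $\|H\|_{\mathrm{op}}/\|A^\dagger\|_{\mathrm{op}} \leq (1+\gamma)\beta$.

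The main obstacle, modest though it is, is spotting the decomposition $H = B^\dagger(I-P) - B^\dagger E A^\dagger$ together with the swap $B^*(I-P) = E^*(I-P)$; without that swap the bound on $\|B^\dagger(I-P)\|_{\mathrm{op}}$ would degenerate to $\gamma\|A^\dagger\|_{\mathrm{op}}$, which does not vanish as $\|E\|_{\mathrm{op}}\to 0$ and hence would not give a true perturbation estimate. Once that identity is in hand, everything else is Weyl's inequality and submultiplicativity.
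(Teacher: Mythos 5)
The paper does not prove this lemma; it is quoted verbatim from Stewart's 1969 paper (Theorem 4.3) on the continuity of the generalized inverse, so there is no in-paper argument to compare against. Your self-contained proof is correct: the observation that $\|A^\dagger\|_{\mathrm{op}}\|E\|_{\mathrm{op}}<1$ plus Weyl's singular-value inequality keeps $B=A+E$ at full column rank and gives $\|B^\dagger\|_{\mathrm{op}}\le\gamma\|A^\dagger\|_{\mathrm{op}}$; the identity $H=B^\dagger(I-P)-B^\dagger EA^\dagger$ with $P=AA^\dagger$ is exactly the Wedin decomposition of $(A+E)^\dagger-A^\dagger$ specialized to the full-column-rank case (the third Wedin term $(I-B^\dagger B)A^\dagger$ vanishes because $B^\dagger B=I$); and the swap $B^*(I-P)=E^*(I-P)$, forced by $A^*(I-P)=0$, is precisely what converts the first term into an $O(\|E\|_{\mathrm{op}})$ quantity. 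The bookkeeping then gives $\|H\|_{\mathrm{op}}/\|A^\dagger\|_{\mathrm{op}}\le\gamma(\gamma+1)\,\|A^\dagger\|_{\mathrm{op}}\|E\|_{\mathrm{op}}=(1+\gamma)\beta$, matching Stewart's constant. The only cosmetic discrepancy is that your argument delivers a non-strict $\le$ where the statement asserts $<$; this is immaterial for the paper's use of the lemma (it is applied with $\kappa=\operatorname{poly}(k)$ and $\|E\|_{\mathrm{op}}=O(n^{-c'})$ to conclude $\|\widehat B_n^\dagger-B^\dagger\|_{\mathrm{op}}=O(n^{-c})$), and in any case the bound can be made strict for $E\neq 0$ by noting that the triangle inequality in the final step is strict unless both summands are parallel.
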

Plugging in $H=\widehat B_n^\dagger-B^\dagger$ into the above lemma and using the fact that $\kappa(B)$ is upper bounded by a polynomial of $k$, whereas $\|\widehat B_n-B\|_{\mathrm{op}}$ is upper bounded by $n^{-c'}$ for some $c'>0$, we are done.
\subsection{Proof of \eqref{eq: high ent}}
As a final step, we present the following lemma on a high-entropy construction discussed in the main text that guarantees estimation \eqref{eq: lem 4.1 est} indeed leads to redundancy lower bounds:
\begin{lemma}[Distribution on hidden states]\label{lem: high ent construction}
     For any constant $c>0$, there exists a distribution $\mu$ supported on the set $\{Q\in\mathbb{R}^{k\times k}: Q \text{ is double stochastic}, \|Q-\T_k\|_{\max}\leq k^{-c}, Q_{i, i}=0\}$ such  that $h(\mu)\gtrsim -k^2\log k$.
 \end{lemma}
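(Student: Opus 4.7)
The plan is to exhibit $\mu$ as the law of $Q = \T_k + \delta(R^\star + \Delta)$, where $\delta = k^{-c}/3$, $R^\star$ is a fixed ``centered'' perturbation, and $\Delta$ is uniformly distributed on a convex body contained in the linear space $V$ of real $k\times k$ matrices with zero diagonal and vanishing row and column sums. The space $V$ has dimension $d := k^2 - 3k + 1 = \Theta(k^2)$, which is exactly the dimension on which any distribution supported in the feasible set must live, so differential entropy of order $-d\log k \asymp -k^2\log k$ is the right target.

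For the centering, take $R^\star_{ij} = 1/(k-2)$ whenever $i \neq j$ and $j \not\equiv i+1 \pmod k$, $R^\star_{i,i+1} = -1$, and $R^\star_{ii} = 0$. A one-line check shows $R^\star \in V$. For any $\Delta \in V$ with $\|\Delta\|_\infty \leq 1/k$, the matrix $Q = \T_k + \delta R^\star + \delta \Delta$ satisfies: zero diagonal and double stochasticity follow since $\T_k$ has row/column sums $1$ and every element of $V$ has vanishing row/column sums and zero diagonal; nonnegativity holds because off-cycle entries equal $\delta/(k-2) + \delta \Delta_{ij} \geq \delta/(k-2) - \delta/k \geq 0$ (for $k\geq 3$), while on-cycle entries equal $1 - \delta + \delta \Delta_{i,i+1} \geq 1 - 2\delta > 0$; and $\|Q - \T_k\|_{\max} \leq \delta(\|R^\star\|_{\max} + \|\Delta\|_\infty) \leq \delta(1 + 1/k) \leq k^{-c}$. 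Thus $Q$ lies in the required set.

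Let $\Omega := \{\Delta \in V : \|\Delta\|_\infty \leq 1/k\}$ and define $\mu$ as the pushforward of the uniform law on $\Omega$ under the affine map $\Delta \mapsto Q$, measured by the $d$-dimensional Hausdorff measure on $V$ endowed with the Frobenius inner product. Since this map scales $d$-dimensional volumes on $V$ by $\delta^d$, one has $h(\mu) = d\log\delta + \log\mathrm{vol}_d(\Omega)$. From $\|\Delta\|_\infty \leq \|\Delta\|_F$, the Frobenius ball $\{\Delta \in V : \|\Delta\|_F \leq 1/k\}$ is contained in $\Omega$, so $\mathrm{vol}_d(\Omega) \geq c_d k^{-d}$ with $c_d = \pi^{d/2}/\Gamma(d/2+1)$. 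Stirling's formula gives $\log c_d = -\tfrac{d}{2}\log d + O(d)$; since $d = \Theta(k^2)$ and $\delta = \Theta(k^{-c})$, both $d\log\delta$ and $\log\mathrm{vol}_d(\Omega)$ are $-O(k^2\log k)$, yielding $h(\mu) \gtrsim -k^2\log k$.

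The main obstacle, and the reason for the specific choice of $R^\star$, is producing a nonvacuous lower bound on $\mathrm{vol}_d(\Omega)$. All off-cycle entries of $R^\star$ have size $\Theta(1/k)$, providing just enough slack to accommodate $\ell_\infty$-perturbations of size $1/k$ without violating nonnegativity or the $\|Q-\T_k\|_{\max}\leq k^{-c}$ constraint; this common scale $1/k$ is precisely what makes the Frobenius-ball inclusion tight enough to deliver the claimed entropy bound.
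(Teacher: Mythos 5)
Your proof is correct, and it takes a genuinely different route from the paper's. The paper's construction is coordinate-based: it identifies $\binom{k-2}{2} = \Theta(k^2)$ disjoint ``$2\times 2$ grid'' updates indexed by pairs $(i,j)$ with $0<i<j-2<k$, attaches an independent random variable $X_{i,j}$ uniform on a small interval $[0,k^{-c-2})$ to each, and lets the on-cycle entries absorb the row/column sums; the entropy bound then follows directly as a sum of $\Theta(k^2)$ one-dimensional uniform entropies, each of order $-\log k$. Your construction instead works globally in the affine space: you identify the full $(k^2-3k+1)$-dimensional linear space $V$ of zero-diagonal, zero-row-sum, zero-column-sum matrices, find an explicit interior feasible direction $R^\star$ (so that a whole $\ell_\infty$-ball around $\T_k+\delta R^\star$ stays doubly stochastic), and take the uniform distribution on that ball, bounding its volume below via the Euclidean ball and Stirling. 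What each buys: the paper's scheme makes the map from the estimate $\widehat Q$ back to the independent coordinates $(X_{i,j})$ an exact identity (since $Q_{ij}=X_{i,j}$), which is convenient in the downstream application of \prettyref{lem: red partial recovery}; your scheme fills out the full ambient dimension of the feasible set, is more canonical in its choice of reference measure (Hausdorff measure on $V$ with the Frobenius metric), and replaces the ad hoc grid bookkeeping with a clean volume computation, at the small cost that converting $\widehat Q$ to an estimate of $\Delta$ rescales the Frobenius error by $1/\delta=\Theta(k^c)$, which is harmless once $n\ge k^D$ for $D$ large. Both yield the claimed $h(\mu)\gtrsim -k^2\log k$.

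Two minor remarks for polish. First, your dimension count $\dim V = k^2-3k+1$ relies on the row and column sum constraints having exactly one linear dependency on the zero-diagonal space, which holds for $k\ge 3$ (for $k=2$ one gets $V=\{0\}$); this is consistent with $R^\star$ requiring $k\ge 3$, and for such small $k$ the target bound $-k^2\log k$ is $O(1)$ anyway. Second, it is worth stating explicitly that the diagonal entries do not constrain the Frobenius-ball inclusion since $Q_{ii}=0$ identically on the feasible set, so there is no hidden loss of dimension there.
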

 \begin{proof}
     Consider any pair $(i, j)$ such that $0<i<j-2<k.$ Consider the associated grids $\{(i, j), (j-1, i+1), (i, i+1), (j-1, j)\}$. One can associate an independent random variable $X_{i,j}$ such that $Q_{i,j}=X_{i,j}=Q_{j-1, i+1}$ and $Q_{i, i+1}=1-\sum_j X_{i,j}-\sum_j X_{j,i+1}$, $Q_{j-1, j}=1-\sum_i X_{i,j}-\sum_i X_{j-1,i}$. This construction will always ensure that the resulting matrix is double stochastic (since each $X$ modifies a $2\times 2$ grid). Furthermore, restricting $0\leq X_{i,j}<k^{-c-2}$ ensures that the max offset from the default $\operatorname{C}_k$ (corresponding to all $X=0$) is at most $k^{-c}$ as desired.

     Finally, the entropy is guaranteed as we recover from $Q$ exactly $\Theta(k^2)$ independent random variables that each has a range of $k^{-O(1)}$, when $k>5$. The cases for small $k$ can be verified easily as when $k=3$ there exists a trivial construction with constant entropy. 
 \end{proof}
\section{Computationally predicting HMMs}\label{appdx: comp HMM}

\subsection{Algorithmic upper bound: small $k, \l$}\label{appdx: small k l}
\begin{figure}[t]
    \centering
    \begin{mdframed}[innerrightmargin=15pt]
        \textbf{Algorithm:} Count the number of $z^K\in[k]^K$ with given a transition and emission counts.

        \vspace{5.5pt}
        \textbf{Input:} Matrices $T\in\mathbb{Z}^{k\times \l}, M\in\mathbb{Z}^{k\times k}$ with non-negative entries, where $\sum T_{ij}=1+\sum M_{rs}=K$.
        Emissions $x^K\in [\l]^K.$
        \begin{enumerate}
            \item If $K=1$ and $T_{zx_1}=1$ for some $z$, output 1 directly.
            \item Check that $\sum_{j\in[k]} M_{ij}=\sum_{j\in[\l]}T_{ij}=N_i$ for all except for exactly one $i_0\in[k]$; otherwise, output 0. In this case, $z_K=i_0$ since it is the only item that shows up differently comparing rows of $M, T$.
            \item For $i\in[k]$ let $M^{(i)}\in\mathbb{R}^{k\times k}, T^{(i)}\in\mathbb{R}^{k\times \l}$ be such that
            \begin{align*}
                M^{(i)}_{ii_0}&=M_{ii_0}-1\\
                T^{(i)}_{i_0x_K}&=T_{i_0x_K}-1
            \end{align*}
            and all other entries matching $M, T$ otherwise. This $i$ represents candidate $z_{K-1}$'s.
        \item Run algorithm on $(M^{(i)}, T^{(i)}; x^{K-1})$ for all $i\in [k]$, and sum the results over $i$ according to \prettyref{eq:recursion}.
        \end{enumerate}
        \textbf{Output:} The number of possible trajectories.
    \end{mdframed}
    \caption{Algorithm for computing 
    $\calA(M,T; x^{K})$, the number of satisfying hidden state sequences.}
    \label{fig: alg 1}
\end{figure}
We show that, for any given matrix $M, T$ one can compute the number of satisfying trajectories $z^n\in [k]^n$ efficiently such that the counts matches exactly $M, T$.

\begin{lemma}[See \Cref{fig: alg 1} and Lemma~\ref{lem: const k l alg main}]\label{lem: const k l alg}
    The proposed algorithm $\cal A$ which runs according to the recursion:
    \begin{equation}
    \mathcal{A}(M, T; x^K)=\sum_{i\in [k]}\mathcal{A}(M^{(i)}, T^{(i)}; x^{K-1})
        \label{eq:recursion}
    \end{equation}
    computes exactly the count of $z^K\in [k]^K$ with the given transition/emission counts in time $K^{O(k\l+k^2)}$.
\end{lemma}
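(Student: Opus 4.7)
The plan is to establish correctness by induction on $K$ and then bound the running time via memoization.

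For correctness, the key structural observation is that the pair $(M,T)$ uniquely determines the last hidden state $z_K$. For any valid sequence $z^K$, the row sum $\sum_{j\in[k]} M_{ij}$ counts the occurrences of state $i$ among $z_1,\ldots,z_{K-1}$, whereas $\sum_{j\in[\l]} T_{ij}$ counts its occurrences among $z_1,\ldots,z_K$. Hence these two row sums coincide for every $i \neq z_K$ and differ by exactly one for $i = z_K$. Step~2 of the algorithm thus either identifies $i_0 = z_K$ uniquely or correctly outputs $0$ whenever no sequence is consistent with $(M,T)$. Conditioning on the value of $z_{K-1}=i \in [k]$, the residual prefix $z^{K-1}$ must be consistent with count matrices obtained from $(M,T)$ by decrementing the transition entry $(i, i_0)$ and the emission entry $(i_0, x_K)$; these are exactly $(M^{(i)}, T^{(i)})$ as constructed in Step~3. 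Summing over the $k$ possible values of $z_{K-1}$ yields the recursion \eqref{eq:recursion}. Combined with the straightforward base case $K=1$ (handled in Step~1), induction on $K$ gives correctness.

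For the runtime, implement the algorithm with memoization keyed on the pair $(M,T)$. The sample size $K$ of any subproblem is recovered as $K = \sum_{i,j} T_{ij}$, and the corresponding emission sequence is the prefix $x^K$ of the original input, so $(M,T)$ is a sufficient memoization key. Since every entry of $M$ or $T$ is a nonnegative integer at most $K$, the total number of distinct subproblems is bounded by $(K+1)^{k^2 + k\l}$. Each subproblem performs $O(k)$ additions over the children, and constructing each $(M^{(i)}, T^{(i)})$ and the associated table lookup takes $\operatorname{poly}(k,\l)$ time. The overall running time is therefore
\[
(K+1)^{k^2+k\l} \cdot \operatorname{poly}(k,\l) = K^{O(k^2+k\l)},
\]
as claimed.

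The only non-routine step is the identification of $z_K$ from the count matrices $(M,T)$ via their row sums; the rest is standard bookkeeping for a memoized recursion on a polynomially bounded state space, so I do not expect any essential obstacle beyond checking the above row-sum argument carefully.
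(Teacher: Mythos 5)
Your proof is correct and follows essentially the same approach as the paper: induction on $K$ using the row-sum comparison between $M$ and $T$ to identify $z_K = i_0$, conditioning on $z_{K-1}$ to obtain the recursion, and a memoized dynamic program over $(M,T)$ keys with $(K+1)^{k^2+k\ell}$ states for the runtime bound. Your explanation of \emph{why} the row-sum discrepancy pins down $z_K$ (counting state occurrences over $z^{K-1}$ vs.\ $z^K$) is slightly more explicit than the paper's terse phrasing, but the argument is the same.
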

\begin{proof}
    The proof is via induction on $K$, assuming that our computation returns the correct result when $K=1$ (in which case it is straightforward to check the count as either 0 or 1). From matching the number of appearances, $z_K=i_0$ for any trajectory with $(M, T)$ in \Cref{fig: alg 1}. One thus sums all trajectories with $(z_{K-1}, z_K)=(i, i_0)$, which is a trajectory counting problem on $z^{K-1}$ corresponding to $(M^{(i)}, T^{(i)})$. Assuming that the count is consistent for $K-1$, the count on $K$ should be consistent as well.

    In terms of the runtime: one can simply create an empty array of size $K^{k\l+k^2}$ first and fill in an item (count) corresponding to some $(M, T)$ at each time some trajectory count is computed. The cost of filling a new item assuming $O(1)$ access 
    to the grid memory
    is at most $O(k)$, and thus the compute filling the entire grid is at most $K^{O(k\l+k^2)}$ assuming $K\in\Omega(k+\l)$. This concludes the runtime.
\end{proof}
Finally, given $Q_{X^n, Z^n}(x^n, z^n)=F(M, T)$ one has that:
$$Q_{X^n}(x^n)=\sum_{z^n} Q_{X^n, Z^n}(x^n, z^n)=\sum_{M, T}F(M, T)\cdot\mathcal{A}(M, T; x^n)$$can be computed in $n^{O(k^2+k\l)}$-time.

\subsection{Algorithmic upper bound: Markov approximation}\label{appdx: any k l}
When $k,\ell$ are moderately large, the above algorithm via marginalization becomes intractable, and we need efficient choices of $Q_{X^{n+1}}$ to achieve a small redundancy in Proposition \ref{prop:riskred}. The idea is to drop the structure in $Z^{n+1}$ (hence no marginalization) and apply a Markov approximation directly to $X^{n+1}$. Specifically, \cite[Lemma 23]{HJW23} shows the existence of $Q_{X^{n+1}}$ that
\begin{align*}
\max_{x^{n+1}} \log \frac{P_{X^d}(x^d)\prod_{t=d+1}^{n+1} P_{X_t|X_{t-d}^{t-1}}(x_t|x_{t-d}^{t-1})}{Q_{X^{n+1}}(x^{n+1})} \lesssim \ell^{d+1}\log \frac{n}{\ell^{d+1}} + d\log \ell
\end{align*}
for $n\ge \ell^{d+1}$, and $Q_{X^{n+1}}$ can be evaluated in time $\operatorname{poly}(n,\ell^{d})$. Taking the expectation over $x^{n+1}\sim P_{X^{n+1}}$ leads to the redundancy upper bound of $Q_{X^{n+1}}$: 
\begin{align*}
\Red(Q_{X^{n+1}}; \PHMM_{n}(k,\ell)) \lesssim \sum_{t=d+1}^{n+1} I(X_t; X^{t-d-1}|X_{t-d}^{t-1}) + \ell^{d+1}\log \frac{n}{\ell^{d+1}} + d\log \ell,
\end{align*}
where the first term is further upper bounded by
\begin{align*}
(n+1)I(X_{n+1}; X^{n-d} | X_{n+1-d}^n) \stepa{\le} \frac{n+1}{d+1}\sum_{t=0}^n I(X_{n+1}; X^{n-t} | X_{n-t+1}^n)
\stepb{\le} \frac{n+1}{d+1}\log k. 
\end{align*}
Here (a) is because of the decreasing property of $t\mapsto I(X_{n+1}; X^{n-t} | X_{n-t+1}^n)$, and (b) follows from Proposition \ref{prop: decay info}. Consequently, by Propositions \ref{prop:riskred} and \ref{prop: decay info}, this choice of $Q_{X^{n+1}}$ leads to the prediction risk
\begin{align*}
\Risk(\widetilde{Q}_{X_{n+1}|X^n}, \PHMM_n(k,\ell)) \lesssim \frac{\log k}{d} + \frac{\ell^{d+1}}{n}\log \frac{n}{\ell^{d+1}} + \frac{d\log \ell + \log k}{n}. 
\end{align*}
Choosing $d = \log n/(2\log \ell)$ leads to the risk upper bound in Theorem \ref{thm:computation_UB}. The overall computational time is $\operatorname{poly}(n,\ell^d) = \operatorname{poly}(n)$, as desired.

\subsection{Computational lower bounds}\label{appdx: comp lower}
In the last part of our computational discussions we sketch two lower bounds, in contrast with our $O(\frac{\log k\log \l}{\log n})$ upper bound. Our lower bounds will be based on cryptographic assumptions involving the {Learning Parity with Noise} (LPN) problem (\cite{blum2003noise}) and {refutation of a class of Constraints Satisfying Problem (CSP) (\cite{feldman2015complexity}). In particular, the following assumptions are observed.
 \begin{conjecture}[Learning Parity With Noise, see e.g. \cite{Wiggers21practical}]\label{conj: lpn}
 Let the secret key $\mathbf{s}\sim_{\operatorname{unif}}\mathbb{F}_2^k$ and noise $\eta=0.05$. Any polynomial-time algorithm on the Learning Parity with Noise (LPN) problem with $\Omega(1)$-time query access to a noisy observation $y=\langle \mathbf{s}, \mathbf{x} \rangle\oplus\Bern(\eta)$ for $\mathbf{x}\sim_{\operatorname{unif}}\mathbb{F}_2^k$,   requires $2^{\Omega(k/\log k)}$ computational complexity to decide between pure noise ($y\sim\Bern(1/2)$) and noisy parity ($y=\langle \mathbf{s}, \mathbf{x} \rangle\oplus\Bern(\eta)$) correctly with probability $2/3$.
 \end{conjecture}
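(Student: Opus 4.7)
The plan is to prove LPN hardness by reducing from a well-studied worst-case coding-theoretic problem, deferring the fine-grained lower bound to a more foundational assumption. First I would formalize the distinguishing task: LPN samples $(\mathbf{x}, y)$ with $y = \langle \mathbf{s}, \mathbf{x}\rangle \oplus e$, $e \sim \Bern(\eta)$, are to be told apart from uniform samples $(\mathbf{x}, y)$ with $y \sim \Bern(1/2)$. A standard search-to-decision reduction (Applebaum--Ishai--Kushilevitz, Blum--Furst--Kearns--Lipton) shows that a polynomial-time distinguisher with advantage bounded away from zero implies a polynomial-time algorithm that recovers $\mathbf{s}$ with high probability from $\mathrm{poly}(k)$ samples. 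Thus it suffices to rule out subexponentially fast search algorithms.

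Next I would attempt to reduce search-LPN from worst-case decoding of random linear codes over $\mathbb{F}_2$. The secret $\mathbf{s}$ plays the role of an unknown error pattern, and the LPN samples encode a syndrome-decoding instance. By randomizing the design matrix via random invertible row operations and random coordinate permutations, one can try to argue (in the spirit of Regev's LWE reduction, though without a clean $\mathbb{F}_2$ analogue) that the induced distribution is statistically close to LPN with a uniformly random secret. A polynomial-time LPN solver would then solve random instances of $(k, \mathrm{poly}(k))$-decoding, from which one would hope, via amplification and hybridization, to solve worst-case bounded-distance decoding for a slightly perturbed parameter regime. Evidence that this program is on the right track is that the Blum--Kalai--Wasserman (BKW) algorithm, along with all its refinements (Lyubashevsky, Levieil--Fouque, etc.), has remained stuck at time $2^{O(k/\log k)}$, exactly matching the conjectured lower bound.

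The final and main obstacle is quantitatively matching the $2^{\Omega(k/\log k)}$ lower bound. Any proof must rule out \emph{every} subexponentially faster algorithm, not merely BKW-style ones; unconditionally this lies far beyond current techniques, which cannot even separate $\mathrm{BPP}$ from $\mathrm{NP}$, let alone establish fine-grained exponents for an average-case problem. Even granting the worst-case to average-case reduction above, one would still need a $2^{\Omega(k/\log k)}$ lower bound for worst-case random-code decoding, and existing NP-hardness of nearest-codeword (Berlekamp--McEliece--van Tilborg) gives no such fine-grained guarantee. The honest conclusion is that this statement is (and will likely remain) a conjecture, derivable only conditionally from equally strong assumptions such as the fine-grained hardness of decoding or SETH-style hypotheses tailored to $\mathbb{F}_2$-linear structure; within the scope of the present paper it is taken as a cryptographic hypothesis rather than a theorem we set out to establish.
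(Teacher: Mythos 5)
Your conclusion is exactly right: the paper states this as an unproven cryptographic hardness assumption (with a citation to the literature) and offers no proof, using it only as a hypothesis for the computational lower bounds in Theorem~\ref{thm: hmm comp lower}. Your honest assessment that the statement is a conjecture, not a theorem to be established here, matches the paper's treatment.
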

 \begin{conjecture}[Refuting CSP's, see e.g. \cite{kothari2017sum}]\label{conj: csp}
    Let $k>r$ be constants,  $Q$ be any distribution over $k$-clauses with $N$ variables of complexity $r$ and $0<\eta<1$. Any polynomial-time algorithm that, given access to a distribution $D$ that equals either the uniform distribution over $k$-clauses $U_k$ or a (noisy) planted distribution $Q_{\boldsymbol{\sigma}}^\eta=(1-\eta) Q_{\boldsymbol{\sigma}}+\eta U_k$ for some $\boldsymbol{\sigma} \in\{0,1\}^n$ and planted distribution $Q_\sigma$, decides correctly whether $D=Q_\sigma^\eta$ or $D=U_k$ with probability $2 / 3$ needs $\widetilde{\Omega}\left(N^{r / 2}\right)$ clauses. (Here $\widetilde\Omega$ ignores $\log$ factors.)
 \end{conjecture}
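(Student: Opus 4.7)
This is a hardness conjecture imported from the literature on refuting random constraint satisfaction problems, not a theorem admitting a formal proof; it is adopted here as a black-box hypothesis in order to derive the computational lower bound \prettyref{thm:computation_LB}. What I can sketch is therefore the standard complementary-evidence argument used to justify such average-case hardness assumptions, rather than an actual proof.

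The plan has two matching halves. On the algorithmic side, for a planting distribution $Q$ of complexity $r$, the best known polynomial-time refutation algorithms---spectral and tensor-based methods in the style of Coja-Oghlan--Goerdt--Lanka, Allen--O'Donnell--Witmer, and Raghavendra--Rao--Schramm---succeed only once the number of clauses reaches $\widetilde{\Theta}(N^{r/2})$. The structural reason is that the $r$-th moment tensor is the lowest-order statistic that can distinguish $Q_{\boldsymbol{\sigma}}^\eta$ from the uniform distribution $U_k$ (lower moments agree by the $(r-1)$-wise independence implicit in complexity $r$), and extracting a spectral certificate from it requires $\widetilde{\Omega}(N^{r/2})$ samples. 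No improvement has been found despite sustained effort.

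On the lower-bound side, I would cite matching unconditional bounds in restricted computational models. Sum-of-Squares lower bounds (Grigoriev, Schoenebeck, Kothari--Mori--O'Donnell--Trevisan) rule out low-degree SoS refutations of random $k$-CSPs below the $\widetilde{\Omega}(N^{r/2})$ clause threshold, and the Statistical Query framework (Feldman--Perkins--Vempala) rules out a broad class of iterative averaging and moment-method algorithms at the same threshold. Under the widely-held meta-heuristic that SoS or SQ is nearly optimal among polynomial-time algorithms for noisy planted problems of this flavor, the conjectured bound follows.

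The principal obstacle---and the reason the statement appears as a \emph{conjecture} rather than a theorem---is the one shared by every such hardness hypothesis: an unconditional proof would entail new complexity separations (at minimum an average-case $\mathsf{P}\neq\mathsf{NP}$), well beyond current techniques. Consequently the statement is invoked only as an assumption, and the real work in the subsequent subsection is not to prove it but to carry out an embedding of planted-CSP (and LPN) instances into an HMM with a bounded number of hidden states or a bounded emission alphabet, so that any $o(1)$-risk polynomial-time predictor for the HMM would yield a polynomial-time refuter and violate \Cref{conj: csp}.
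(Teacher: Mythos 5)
You correctly identify that this statement is an unproven average-case hardness assumption imported from the CSP-refutation literature and used as a black-box hypothesis, which is exactly how the paper treats it (no proof is given, only the citation to Kothari et al.). Your survey of the supporting evidence---matching spectral algorithms, SoS and SQ lower bounds, and the obstruction that an unconditional proof would require new complexity separations---is accurate and consistent with the paper's intent, so there is nothing to compare beyond noting that the paper simply states the conjecture while you additionally summarize why it is believed.
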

 Given the above assumptions, our lower bounds are as follows:
 \begin{theorem}[Computational lower bounds for HMM prediction]\label{thm: hmm comp lower}
 The following holds:
     \begin{enumerate}
         \item For any $\eps>0$, if $k\geq \log^{1+\eps} n$ and $\l\geq 2$, then there exists a distribution on HMMs where no efficient algorithm can achieve $o(\frac{\log k}{\log n\log\log n})$ error, assuming Conjecture~\ref{conj: lpn}.
         \item For every $\alpha>0$ there exists $k_\alpha\geq 2$, such that if $k\geq k_\alpha$ and $\l\geq n^\alpha$, there exists a distribution on HMMs where no efficient algorithm can achieve $o(1)$ error assuming Conjecture~\ref{conj: csp}.
     \end{enumerate}
 \end{theorem}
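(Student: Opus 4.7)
The plan is to reduce from LPN (part 1) and from refuting planted random CSPs (part 2) to HMM prediction, by constructing HMMs whose next-symbol prediction task implicitly solves the respective cryptographic problem, so that a sufficiently accurate polynomial-time predictor violates the relevant conjecture.

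For part 1, I would embed LPN instances of secret dimension $k'\asymp\log k$ into an HMM with $k$ hidden states and binary emissions. The hidden state is a triple $(\mathbf{s},\mathbf{x},i)$ with $\mathbf{s}\in\mathbb{F}_2^{k'}$ the LPN secret (constant throughout the trajectory), $\mathbf{x}\in\mathbb{F}_2^{k'}$ the currently active query, and $i\in\{0,1,\dots,k'\}$ a position counter inside a length-$(k'+1)$ block. The dynamics increment $i$ deterministically, and upon $i=k'$ we resample $\mathbf{x}\sim\mathsf{Unif}(\mathbb{F}_2^{k'})$ and reset $i=0$. Emissions are the bit $\mathbf{x}_{i+1}$ for $i<k'$ and the noisy parity $\langle\mathbf{s},\mathbf{x}\rangle\oplus \mathrm{Bern}(0.05)$ for $i=k'$. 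The total state space has size $2^{2k'}(k'+1)$, so $k'=\Theta(\log k)$ fits within $k$ states, and stationarity is obtained by starting the block offset $i$ uniformly at random together with $\mathbf{s}\sim\mathsf{Unif}(\mathbb{F}_2^{k'})$. A length-$n$ trajectory $X^n$ then contains $\Theta(n/\log k)$ complete LPN samples placed at the block-end emissions, all independent conditionally on $\mathbf{s}$.

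Given this embedding, suppose some $\mathrm{poly}(n)$-time predictor achieves KL prediction risk $o(\log k/(\log n\log\log n))$ at a step $n{+}1$ that falls on a block end. By Pinsker's inequality, its guess for the next bit has total-variation advantage $\omega(\sqrt{\log k/(\log n\log\log n)})$ over the trivial $\mathrm{Bern}(1/2)$, conditioned on the already-revealed query $\mathbf{x}$ at time $n{+}1$. Rerandomizing over the block offset and applying the predictor at each of the $\Theta(n/\log k)$ block-end steps inside $X^n$ (which act as fresh LPN queries given $\mathbf{s}$) and amplifying via independent repetitions yields a $\mathrm{poly}(n)$-time LPN distinguisher with constant advantage. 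Since $k\ge\log^{1+\eps} n$ gives $k'/\log k'\gtrsim \log^{1+\eps} n/\log\log n=\omega(\log n)$, this contradicts Conjecture~\ref{conj: lpn}.

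For part 2, given a planted distribution $Q_\sigma^\eta$ on $r$-clauses over $N=\mathrm{poly}(n)$ variables, I would build an HMM with $k_\alpha$ hidden states and emission alphabet $[\ell]$ with $\ell\ge n^\alpha$, where $r$ is chosen so large (in terms of $\alpha$) that $\widetilde\Omega(N^{r/2})\gg n$, and $k_\alpha\ge r+O(1)$. The hidden state is a counter $i\in\{0,\dots,r\}$ plus $O(1)$ auxiliary states through which the planted $\sigma$ acts only implicitly on the emission distribution via $Q_\sigma^\eta$. At state $i<r$ the HMM emits the $(i{+}1)$-th variable index of a fresh uniformly random $r$-clause (fitting in $[N]\subseteq[\ell]$), and at state $i=r$ it emits the noisy satisfaction bit produced by $Q_\sigma^\eta$ given the just-emitted clause and $\sigma$. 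An $o(1)$-KL predictor at a block-end step would distinguish the satisfaction bit from $U_k$ with constant advantage, yielding a $\mathrm{poly}(n)$-time refutation of the planted CSP using $n$ clauses, contradicting Conjecture~\ref{conj: csp}.

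The main obstacle is the quantitative part of the first reduction: converting a single-step KL risk bound into an LPN distinguisher of constant advantage while tracking how the $\log\log$ factor emerges from the LPN hardness exponent $k'/\log k'$ rather than $k'$ alone. The cleanest route is to argue that stationarity combined with the cyclic block structure implies $\Omega(n/\log k)$ block-end steps inside $X^n$ yield essentially i.i.d.\ LPN samples (given $\mathbf{s}$), so Pinsker plus amplification converts risk $r_n$ into distinguishing advantage $\Omega(1)$ as soon as $r_n\cdot (n/\log k)\gtrsim 1$, matching $\log k/(\log n\log\log n)$ after accounting for the LPN query complexity $2^{\Omega(k'/\log k')}$. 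Secondary technical points are verifying stationarity while keeping $\ell=2$ in part 1, and ensuring in part 2 that the emission of a random clause index truly fits in one step when $\ell\ge n^\alpha$ and $N=\mathrm{poly}(n)$.
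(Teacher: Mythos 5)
Your reduction for part~1 has a fundamental flaw in how the LPN secret dimension is chosen, and it breaks the whole argument. You set the secret length to $k'\asymp\log k$. For the LPN instance to be hard against $\operatorname{poly}(n)$-time algorithms under Conjecture~\ref{conj: lpn}, you need $2^{\Omega(k'/\log k')}=n^{\omega(1)}$, i.e. $k'/\log k'=\omega(\log n)$. But with $k'\asymp\log k$ and the regime of interest $k=\log^{1+\eps}n$, you get $k'\asymp\log\log n$, so $k'/\log k'\asymp\log\log n/\log\log\log n\ll\log n$: the LPN instance can be brute-forced in $2^{O(\log\log n)}=\operatorname{polylog}(n)$ time, and the hardness assumption gives you nothing. (Your line ``$k'/\log k'\gtrsim\log^{1+\eps}n/\log\log n$'' silently substituted $k$ for $k'$.) Moreover, even granting hardness, your risk accounting only produces a lower bound of order $1/(k'+1)\asymp 1/\log k$ rather than $\log k/(\log n\log\log n)$; for, say, $k=n$ this is $1/\log n$, strictly weaker than the claimed $1/\log\log n$. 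A secondary issue is that placing the secret $\mathbf{s}$ inside the hidden state makes the chain reducible (a separate recurrent class per $\mathbf{s}$), so it is not a single stationary HMM; the secrets should be model parameters that shape the transition matrix, not state components.

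The paper's construction resolves exactly this tension by decoupling two parameters: the length $r$ of each LPN secret (chosen as $r=C\log n\log\log n$ so that $r/\log r=\omega(\log n)$ and Conjecture~\ref{conj: lpn} applies) and the number $s\asymp\log k$ of independent LPN secrets run in parallel, which governs the state-space size $\approx(r+s)2^{s+1}\le k$. The cycle has length $r+s$: the first $r$ steps emit a fresh uniform query vector bit by bit (one bit per step, re-sampled each step, with each accumulator $b_j$ updated by $b_j\oplus s_{i,j}c_0$), and the last $s$ steps emit the $s$ noisy parities $\langle\mathbf{s}_j,\mathbf{b}_0\rangle\oplus\Bern(\eta)$. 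The prediction risk is then $\Omega(1)\cdot\frac{s}{r+s}\asymp\frac{\log k}{\log n\log\log n}$ as required. This is genuinely different from your scheme (one secret, $k'$ short, stored query in the state) and is what makes both the hardness and the rate work out.

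For part~2 your sketch is directionally aligned with what the paper does (which in turn cites \cite{sharan2018prediction} and their CSP embedding, patched via \prettyref{lem: partial_reveal} to account for the different loss), but you should be explicit that the number of CSP variables is set to $N=n^{\Theta(\alpha)}\le\ell$ so clause indices fit in one emission, and that $r$ is chosen with $\alpha r/2>1$ so $n$ clauses is below the refutation threshold. As written, this part is plausible but not a complete argument.
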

\begin{remark}
    This result, combined with \Cref{thm:computation_UB}, leaves the following cases of interest open in terms of computational algorithms:
    \begin{enumerate}
        \item For $k=n^{\Omega(1)}$ and $\l=2$, can there be efficient algorithm achieving $o(1)$ risk? 
        \item For $k=O(1)$ and $\l=\operatorname{polylog}(n)$, can there be computational lower bounds of $1/\operatorname{polylog}(n)$ for prediction risk?
    \end{enumerate}
\end{remark}
The embedding of cryptographically hard models into computational lower bounds in HMM has been long observed in various prior literature (e.g. \cite{mossel2005learning,sharan2018prediction}). Here we adopt these constructions into our setting.
\begin{proof}[Proof of \Cref{thm: hmm comp lower}] We divide the two cases:
    \begin{enumerate}
    \item For the $\l=2$ case. Let $s=\lfloor\log_2 \frac{k}{r}\rfloor-2$ for some $r$ and let the $k$ hidden states be labeled:
    $$Z=\{(i, b_0, b_1, b_2,\dots b_s); b\in \{0, 1\}, i=1,2,\dots, r+s\}$$
    and hence $|Z|=(r+s)2^{r+1}\leq r2^{r+2}\leq k$. We will choose $r=C\log n\log\log n$ for a large constant $C$, so that $s \in \Omega(\log k)$ thanks to the assumption $k \ge \log^{1+\varepsilon} n$. 

    Let $(s_{i,j})_{i\in [r], j\in [s]}$ be independent $\Bern(1/2)$ secret keys, so that there are $s$ secret keys in total, each of length $r$. The transitions and emissions are defined as follows: 
    \begin{itemize}
        \item Emission: state $(i, b_0^s)$ emits $b_0$ if $i\le r$, and $b_{i-r} \oplus \Bern(\eta)$ if $i\in \{r+1,\cdots,r+s\}$; 
        \item Transition: state $(i, b_0^s)$ goes to $(i+1, c_0^s)$ (as usual, $r+s$ goes to $1$), where:
        \begin{enumerate}
            \item If $i\in \{r,r+1,\cdots,r+s-1\}$, let $c_0^s = b_0^s$; 
            \item If $i\in \{r+s,1,\cdots,r-1\}$, sample $c_0 \sim \Bern(1/2)$, and let $c_j = b_j \oplus s_{i+1,j}c_0$ for all $j=1,\cdots,s$ (as usual, $s_{i+1,j} = s_{1,j}$ when $i=r+s$).
        \end{enumerate}
    \end{itemize}
    In other words, the transition runs in cycles of length $r+s$. During the first $r$ rounds in each cycle, the learner observes $\mathbf{b}_0=(b_{0,1},\cdots,b_{0,r})\sim \mathsf{Unif}(\mathbb{F}_2^r)$. For the $(r+j)$-th round with $j\in [s]$, under the current transition the learner observes
    \begin{align*}
    \langle \mathbf{b}_0, \mathbf{s}_j \rangle \oplus \Bern(\eta), \quad \text{where } \mathbf{s}_j = (s_{1,j},\cdots,s_{r,j}). 
    \end{align*}
    In other words, each cycle consists of one query to each of $s$ independent LPN instances. Since there are $\le n$ cycles in total, each LPN instance has sample size at most $n$. 

    Next we understand the prediction problem of the current HMM. Clearly, under the stationary distribution we have $i\sim \mathsf{Unif}([r+s])$, so that $\P(i\in \{r+1,\cdots,r+s\})= s/(r+s)$ for the state at time $n+1$. Again, using Lemma \ref{lem: partial_reveal} and \eqref{eq: decay info}, we assume without loss of generality that the starting state $i$ at $t=0$ is known to the learner, so that the learner knows the relative location of time $n+1$ in a given cycle. Suppose time $n+1$ is the $(r+j)$-th round of some cycle, then predicting $X_{n+1}$ is the same as predicting the distribution $\langle \mathbf{b}_0, \mathbf{s}_j \rangle \oplus \Bern(\eta)$ with observed $\mathbf{b}_0=(b_{0,1},\cdots,b_{0,r})\sim \mathsf{Unif}(\mathbb{F}_2^r)$ and a hidden secret $\mathbf{s}_j$. As $n \le 2^{cr/\log r}$ with $c\to 0$ as $C\to\infty$, Conjecture \ref{conj: lpn} implies that the KL prediction error is $\Omega(1)$ by choosing $C$ large enough. Consequently, the overall KL prediction risk is lower bounded by
    \begin{align*}
        \Omega(1)\cdot \frac{s}{r+s} = \Omega\left(\frac{\log k}{\log n\log\log n}\right). 
    \end{align*}
    Now choosing the growth of $\omega$ in the definition of $r$ arbitrarily slow gives the claim. 
    
    \item For the $\l=n^\alpha$ case, this follows directly from plugging in the parameter correspondence to Theorem 2 in \cite{sharan2018prediction} while leaking the first hidden state in the fashion of Lemma~\ref{lem: partial_reveal}\footnote{The results in \cite{sharan2018prediction} does not require this lemma as they assumed a slightly different loss; see \Cref{appdx: losses}. Here we bypass this issue by leaking $U=Z_1$ to both $P^{\operatorname{HMM}}$ and $Q$ and adjust the result via Lemma~\ref{lem: partial_reveal}.}. In short, when $r$ is a large enough constant in Conjecture~\ref{conj: csp}, there exists a distribution on constant-size clauses such that detection is impossible on the CSP problem which can be embedded with $O(1)$ hidden states. Therefore, with constant probability, one cannot distinguish the next bit from random.
    \end{enumerate}
\end{proof}

\section{Lower bound proof for renewal processes}\label{appdx: renewal}

In this section we prove the lower bound part of \prettyref{thm: risk renewal}, namely, $\Riskrenew(n) \gtrsim \sqrt{n^{-1}}$.
Similar to the strategies proving lower bounds in \Cref{appdx: lower HMM}}, we consider a Bayesian setting where the model parameter (in this case the interarrival distribution $\mu$) is random and drawn from some prior. Then the Bayes prediction KL risk is given by the  conditional mutual information 
$I(\mu;X_{n+1}|X^n)$, which we aim to show is at least $\Omega(\sqrt{n^{-1}})$.

Let us first recall the equivalent HMM representation for renewal processes from \prettyref{sec:ub-renewal} with state space $\naturals$. The stationary distribution on the hidden states is the same as the distribution of the initial wait time $T_0$, given by:
    \begin{equation}
       \pi_\mu(i) = \frac{1}{m(\mu)}\sum_{j> i}\mu(j)
    \label{eq:stationary-renewal}
    \end{equation}
    where $m(\mu) = \sum_{i\geq 1} i \mu(i)$ is the mean of $\mu$. 
Notably, $X^n$ has the same law as its time reversal. 
This reversibility will be exploited in our proof of the lower bound. 
    

We will consider a prior under which $\mu$ is always finitely supported. Let the last appearance of ``1'' in $X^n$ be $X_K$ for $K\leq n$, and let $T\triangleq\one_{n+1-K\in\operatorname{supp}(\mu)}$. Note that $T=0$ implies $X_{n+1}=0$ almost surely. Denote by $p(X^{n}, \mu)\triangleq \P(X_{n+1}=1|X^n, \mu)$ the optimal predictor who knows the model parameter $\mu$. By \prettyref{eq:hazard}, this is given by the hazard ratio of $\mu$, namely
\begin{equation}
p(X^{n}, \mu)
=\frac{\mu(n+1-K)}{\sum_{d\geq n+1-K} \mu(d)}.
\label{eq:hazard1}
\end{equation}
Without knowing $\mu$, the predictor is the average of $p(X^{n}, \mu)$ over the posterior law of $\mu$ given the data $X^n$. Let $p(X^n)\triangleq\P(X_{n+1}=1|X^n) = \E_{\mu | X^n}[p(X^{n}, \mu)]$. 

Let $E_1$ be some event measurable with respect to $\sigma(X^n)$ to be specified. Let $E_2$ be the event of $T=0$, which is measurable with respect to $\sigma(X^n,\mu)$. For any estimator $Q$, its average risk (with expectations taken over both data $X^n$ and $\mu$ according to the prior) satisfies
    \begin{align}
    \E[\kl(P_{X_{n+1}| X^n, \mu} \| Q_{X_{n+1}|X^n})]
    &\ge \P(E_1)\cdot \E[\kl(P_{X_{n+1}|X^n, \mu} \| Q_{X_{n+1}|X^n}) | E_1]\nonumber \\
    &\ge \P(E_1)\cdot \E[\kl(P_{X_{n+1} | X^n, \mu} \| P_{X_{n+1} | X^n}) | E_1]\nonumber \\
    &= \P(E_1)\cdot \E[\kl( \Bern(p(X^n, \mu)) \| \Bern(p(X^n)) ) | E_1]\nonumber \\&\ge \P(E_1\cap E_2)\cdot \E[\kl\left( \Bern(0) \| \Bern(p(X^n))\right) | E_1\cap E_2]\nonumber \\
    &\ge \P(E_1\cap E_2)\cdot \E[p(X^n)|E_1\cap E_2] \label{eq: risk rnwl E12} 
    \end{align}
    where we used the fact that $T=0$ implies $p(X^n, \mu)=0$. Furthermore:
    \begin{align}\label{eq:risk rnwl p}
        p(X^n) = \E_{\mu | X^n}[p(X^n,\mu)]=\P(E_2^c|X^n)\E_{\mu | X^n, E_2^c}[p(X^n,\mu)].
    \end{align}
    We will choose a prior under which $\mu$ is always uniform over $\Theta(\sqrt{n})$ integers. Therefore, by \prettyref{eq:hazard1}, 
    for all $(\mu, X^n)\in E_2^c$, $p(X^n,\mu)\gtrsim \sqrt{n^{-1}}$. Furthermore, suppose we can show that there exists an event $E_1$ and a constant $c>0$ such that: (a) $\P(E_1)>c$ and (b) for all $X^n\in E_1$, one has $\P(T=0|X^n)=\P(E_2|X^n)\in(c, 1-c)$.
   Then the last line of \eqref{eq: risk rnwl E12} is lower bounded by the desired $\Omega(n^{-1/2})$ rate. In the following, we show the construction and proof.

We consider a prior that was previously used for proving the redundancy lower bound in \cite{csiszar96redundancy}. There the goal is to prove that $I(\mu;X^n) \gtrsim \sqrt{n}$ as opposed to $I(\mu;X_{n+1}|X^n) \gtrsim \frac{1}{\sqrt{n}}$ here. Let $a_n=C\sqrt{n}$ be an even number for some large constant $C$. Let the interarrival distribution $\mu$ to be the uniform distribution on an $a_n$-subset of $[2a_n]$, with its support chosen uniformly over all such sets where exactly half of its elements lies in $[0, a_n]$. In this way, for all $(X^n, \mu)\in E_2^c$ one has that $p(X^n, \mu)\geq \mu(n+1-K)\geq \frac{1}{a_n}\asymp\sqrt{n^{-1}}$.

Define $E_1$ to be the intersection of the following events: \begin{enumerate}
        \item[(1)] The last appearance of $X_K=1$ satisfy $K>n-a_n$.
        \item[(2)] There are at most distinct $\sqrt{n}$ interarrival times in $X^n$
        (known as the renewal types \cite{csiszar96redundancy}).
        Denote this set of interarrivial times by $A$.
        \item[(3)] The gap $n+1-K$ has never appeared in the past $\sqrt{n}$ interarrivals backwards from $X_K$.
    \end{enumerate}
Clearly $E_1$ is $\sigma(X^n)$-measurable. We show that $\P(E_1) = \Omega(1)$ for a large enough constant $C$. By the time reversal property of the renewal process, the distribution of $n+1 - K$ is given by $\pi_\mu$ in \eqref{eq:stationary-renewal}. As $m(\mu)\le 2a_n$ and 
\begin{align*}
\sum_{i=1}^{a_n} \sum_{j>i} \mu(j) \ge \sum_{j=a_n+1}^{2a_n} a_n \mu(j) = a_n \mu([a_n+1, 2a_n]) = \frac{a_n}{2}, 
\end{align*}
event (1) happens with probability at least $1/4$. As for (2), the expectation of the interarrival time is $\ge a_n/2 = C\sqrt{n}/2$. Consequently, for $C>3$, Hoeffding's inequality shows that event (2) happens with probability $1-o_n(1)$. For (3), each interarrival time equals to $n+1-K$ with probability at most $1/a_n$. By a union bound, event (3) happens with probability at least $1 - \sqrt{n}/a_n = 1 - 1/C$. A union bound then gives that $\P(E_1)\ge 1/4 - 1/C - o_n(1)$, which is $\Omega(1)$ for large enough $(n,C)$. 

Next we show that $\P(T=0|X^n) \in (c,1-c)$ whenever $E_1$ holds. Let us first prove a simple lemma:
\begin{lemma}\label{lem: renewal lem}
   For any $\mu_1, \mu_2$ supported in the prior and any $k\in [a_0]$, it holds that
   $$\frac{1}{8}\le \frac{\P(K=n+1-k|\mu_1)}{\P(K=n+1-k|\mu_2)}\le 8.$$
\end{lemma}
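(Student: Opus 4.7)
The strategy is to identify $\P(K=n+1-k\mid\mu)$ with an explicit functional of $\mu$, then show this functional lies in a window that is independent of the particular $\mu$ drawn from the prior, with the top and bottom of the window within a factor of $8$.

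First, I would identify the probability explicitly. For any $k\geq 1$, the event $\{K=n+1-k\}$ is precisely the event that a renewal occurs at time $n+1-k$ and that the next interarrival exceeds $k-1$ (so that $X_{n+2-k}=\cdots=X_n=0$). Under the stationary renewal law, any fixed index is a renewal with probability $1/m(\mu)$, and given a renewal at that index the next interarrival has law $\mu$, independent of the past. Hence
\[
\P(K=n+1-k\mid \mu) \;=\; \frac{\sum_{j\geq k}\mu(j)}{m(\mu)} \;=\; \pi_\mu(k-1),
\]
with $\pi_\mu$ as in \eqref{eq:stationary-renewal}.

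Second, I would bound the numerator and denominator uniformly over the prior. Recall that $\mu=\mathrm{Unif}(S)$ where $S\subset[2a_n]$ has $|S|=a_n$, with exactly $a_n/2$ elements in each of $[a_n]$ and $(a_n,2a_n]$. Then
\[
m(\mu)=\frac{1}{a_n}\sum_{j\in S}j\in\left[\tfrac{3a_n}{4},\,2a_n\right],
\]
the lower bound using that the $a_n/2$ elements in $(a_n,2a_n]$ alone contribute at least $a_n/2\cdot a_n$, the upper bound using $S\subset[2a_n]$. For $k\in[a_n]$, the tail satisfies
\[
\sum_{j\geq k}\mu(j)=\frac{|S\cap[k,2a_n]|}{a_n}\in\left[\tfrac12,\,1\right],
\]
since $S\cap(a_n,2a_n]$ already contributes $a_n/2$ elements with $j\geq k$.

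Combining the two, $\P(K=n+1-k\mid\mu)\in[\tfrac{1}{4a_n},\tfrac{4}{3a_n}]$ uniformly over $\mu$ in the prior and $k\in[a_n]$, so the ratio is at most $16/3<8$. I do not anticipate any genuine obstacle: the only probabilistic ingredient is the stationary renewal density $1/m(\mu)$ together with the loss-of-memory property at a renewal, and the remainder is elementary counting exploiting the clean combinatorial structure of the prior.
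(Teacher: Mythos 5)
Your proof is correct and follows essentially the same route as the paper: express $\P(K=n+1-k\mid\mu)$ as a ratio of a tail sum of $\mu$ to the mean $m(\mu)$, then bound each factor uniformly over the prior. The only cosmetic difference is that you justify the formula via the equilibrium renewal-rate/memorylessness argument, whereas the paper invokes time reversal to identify the law of $n+1-K$ with the stationary age distribution $\pi_\mu$; both give the same expression (indeed, your $\sum_{j\ge k}\mu(j)$ is the correct indexing, fixing a harmless off-by-one in the paper's \eqref{eq:stationary-renewal}). Your constants are slightly sharper ($m(\mu)\ge 3a_n/4$ versus the paper's $a_n/2$), though the justification in your parenthetical only directly yields $a_n/2$; the $3a_n/4$ bound also needs the contribution of $S\cap[a_n]$, but this is elementary and either bound suffices.
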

\begin{proof}
By the time-reversal property of the renewal process, the conditional distribution of $n+1-K$ conditioned on $\mu$ is given by $\pi_\mu$ in \prettyref{eq:stationary-renewal}. Consequently, 
    $$\P(K=n+1-k|\mu)=\frac{1}{m(\mu)}\sum_{j>k}\mu(j).$$
Since the support of $\mu$ has $a_n/2$ elements in $[a_n]$ and $a_n/2$ elements in $[a_n+1,2a_n]$, we have $m(\mu)\in [a_n/2, 2a_n]$. In addition, as $k\in [a_0]$, we have $\sum_{j>k} \mu(j) \in [1/2,1]$. This gives $\P(K=n+1-k|\mu)\in [1/(4a_n), 2/a_n]$ for all $\mu$ in the support of the prior, and the lemma follows. 
\end{proof}
    Now we show that for all $X^n\in E_1$ with $K=K(X^n)=n+1-k$, the ratio
    $$\frac{\P(T=1|X^n)}{\P(T=0|X^n)}=\frac{\P(T=1|X^n, K=n+1-k)}{\P(T=0|X^n, K=n+1-k)}$$ is bounded above and below by positive constants. Since $E_1$ holds, $k\le a_n$. First of all, 
    \begin{align}\label{eq:bayes_1}
    \frac{\P(T=1|K=n+1-k)}{\P(T=0|K=n+1-k)} &= \frac{\P(k\in \supp(\mu) | K=n+1-k)}{\P(k\notin \supp(\mu) | K=n+1-k)} \nonumber\\
    &= \frac{\P(k\in \supp(\mu))}{\P(k\notin \supp(\mu))}\frac{\P(K=n+1-k| k\in \supp(\mu))}{\P(K=n+1-k|k\notin \supp(\mu))} \nonumber\\
    &= \frac{\P(K=n+1-k| k\in \supp(\mu))}{\P(K=n+1-k|k\notin \supp(\mu))} =\Theta(1), 
    \end{align}
    where the last step is due to Lemma~\ref{lem: renewal lem}. Second, for the set $A=A(X^n)$ consisting of distinct interarrival times in $X^n$, the event $E_1$ implies that $k\notin A$ and $|A|\le \sqrt{n}$. Therefore, 
    \begin{align}\label{eq:bayes_2}
    &\frac{\P(A\subseteq\supp(\mu)|K=n+1-k, k\in\supp(\mu))}{\P(A\subseteq\supp(\mu)|K=n+1-k, k\not\in\supp(\mu))} \nonumber\\
    &= \frac{\P(K=n+1-k|k\notin \supp(\mu)) }{\P(K=n+1-k|k\in \supp(\mu))}\frac{\P(K=n+1-k|k\in \supp(\mu),A\subseteq \supp(\mu))}{\P(K=n+1-k|k\notin \supp(\mu),A\subseteq \supp(\mu))}\nonumber\\
    &\qquad \cdot \frac{\P(A\subseteq \supp(\mu),k\in \supp(\mu))}{\P(A\subseteq \supp(\mu),k\notin \supp(\mu))}\frac{\P(k\notin \supp(\mu))}{\P(k\in \supp(\mu))}\nonumber\\
    &\stepa{=} \Theta(1)\cdot \frac{\binom{a_n-|A\cap [a_n]|-1}{a_n/2}}{\binom{a_n-|A\cap [a_n]|-1}{a_n/2-1}} = \Theta(1)\cdot \frac{a_n/2 - |A\cap [a_n]|}{a_n/2} \stepb{=} \Theta(1), 
    \end{align}
    where (a) is due to Lemma~\ref{lem: renewal lem}, and (b) uses $|A\cap [a_n]|\le |A|\le \sqrt{n} \le a_n/3$ as long as $C\ge 3$. Finally, by writing down the joint pmf of $X^n$ after time reversal, it is clear that
    \begin{align}\label{eq:bayes_3}
        \frac{\P(X^n|K=n+1-k, T=1, A\subseteq\supp(\mu))}{\P(X^n|K=n+1-k, T=0, A\subseteq\supp(\mu))} = 1.
    \end{align}

    Combining the above results leads to
    \begin{align*}
        \frac{\P(T=1|X^n)}{\P(T=0|X^n)} &=\frac{\P(T=1|X^n, K=n+1-k)}{\P(T=0|X^n, K=n+1-k)} \\
        &= \frac{\P(T=1|K=n+1-k)}{\P(T=0|K=n+1-k)} \cdot \frac{\P(X^n|T=1,K=n+1-k)}{\P(X^n|T=0,K=n+1-k)} \\
        &\overset{\prettyref{eq:bayes_1}}{=} \Theta(1)\cdot \frac{\P(X^n|T=1,K=n+1-k)}{\P(X^n|T=0,K=n+1-k)} \\
        &\overset{\prettyref{eq:bayes_3}}{=}  \Theta(1)\cdot \frac{\P(A\subseteq \supp(\mu)|T=1,K=n+1-k)}{\P(A\subseteq \supp(\mu)|T=0,K=n+1-k)} \overset{\prettyref{eq:bayes_2}}{=} \Theta(1).
    \end{align*}
Therefore, both conditions $\P(E_1) \ge c$ and $\P(T=0|X^n)\in (c,1-c)$ for $X^n\in E_1$ are established, and the $\Omega(n^{-1/2})$ lower bound follows from \eqref{eq: risk rnwl E12} and \eqref{eq:risk rnwl p}.

\end{document}